\documentclass{article}
\pdfoutput=1
\usepackage{geometry}
\usepackage{amsmath,amssymb,amsthm,graphicx,float,caption,subcaption,bm}
\usepackage{romannum}
\usepackage{bbm}
\usepackage{graphicx} 
\usepackage{mathrsfs}
\usepackage{stmaryrd}
\usepackage{verbatim}
\usepackage{lipsum}
\newcommand\blfootnote[1]{%
	\begingroup
	\renewcommand\thefootnote{}\footnote{#1}%
	\addtocounter{footnote}{-1}%
	\endgroup
}

\allowdisplaybreaks[4]
\newtheorem{thm}{Theorem}[section]
\newtheorem{lem}[thm]{Lemma}
\newtheorem{assumption}[thm]{Assumption}
\newtheorem{prop}[thm]{Proposition}
\newtheorem{defn}[thm]{Definition}
\newtheorem{rem}[thm]{Remark}

\geometry{left=2.5cm, right=3.0cm}

\title{The obstacle problem for stochastic porous media equations}
\author{Ruoyang~Liu\footnotemark[1] \and Shanjian~Tang\footnotemark[2]}

\begin{document}
\maketitle
\pagenumbering{arabic}
\begin{abstract}
We prove the existence and uniqueness of non-negative entropy solutions
of the obstacle problem for stochastic porous media equations. The
core of the method is to combine the entropy formulation with the
penalization method.

\end{abstract}

\renewcommand{\thefootnote}{\fnsymbol{footnote}}
\blfootnote{\textit{Key words and phrases.} stochastic porous media, entropy solutions, obstacle problem, penalization
method.}
\blfootnote{\textit{MSC2020 subject classifications}: 60H15;35R60;35K59.}
\footnotetext[1]{School of Mathematical Sciences, Fudan University, Shanghai 200433, China.
    Email:18110180044@fudan.edu.cn}
\footnotetext[2]{Institute of Mathematical Finance and Department of Finance and
Control Sciences, School of Mathematical Sciences, Fudan University,
Shanghai 200433, P. R. China. Partially supported by National Science
Foundation of China (Grant Nos. 11631004 and 12031009). Email:sjtang@fudan.edu.cn.}

\section{Introduction}

Consider the following obstacle problem with an upper obstacle $S$:
\begin{equation}
\begin{cases}
du=\big[\Delta\Phi(u)+F(t,x,u)\big]dt-\nu(dt,x)\\
\quad\quad\quad+{\displaystyle \sum_{k=1}^{\infty}}\sigma^{k}(u)dW_{t}^{k},\quad(t,x)\in(0,T)\times\mathbb{T}^{d};\\
u(t,x)\leq S(t),\quad d\mathbb{P}\otimes dt\otimes dx\mathrm{-a.e.};\\
u(0,x)=\xi(x),\quad x\in\mathbb{T}^{d};\\
\int_{Q_{T}}(u-S)\nu(dtdx)=0,\quad\mathrm{a.s.}\ \omega\in\Omega,
\end{cases}\label{eq:main equation_start}
\end{equation}
where $\mathbb{T}^{d}$ is $d$-dimensional torus, and $\{W_{\cdot}^{k}\}_{k\in\mathbb{N}^{+}}$
is a sequence of independent Brownian motions. $\Phi$ is a monotone
function, and a typical type is $\Phi(u)=|u|^{m-1}u$ with $m>1$.
The solution of (\ref{eq:main equation_start}) is a pair $(u,\nu)$.

The initial physical model of this work is fluid flow in a container
with a limitation on the density of the fluid. That is, the least
amount of the fluid will be pumped out of the container, which makes
sure that the density of the fluid is lower than the limitation $S$.

Porous media equations arise in the flow of an ideal gas through a
homogeneous porous medium, and the solution $u$ is the scaled density
of the gas \cite{muskat1938flow}. These equations have applications
in various fields, such as population dynamics \cite{gurtin1977diffusion}
and the theory of ionized gases at high temperature \cite{zel2002physics}.
Since there are quite a lot of studies on these equations, we only
introduce relevant works, and other results can be found in \cite{prevot2007concise,vazquez2007porous,barbu2016stochastic}
and references therein. 

By transforming into a porous media equation with random coefficients,
\cite{lototsky2007random,barbu2011random} proved the existence and
uniqueness result for the equation with linear multiplicative noise.
In \cite{ren2007stochastic,barbu2008existence,barbu2009existence,barbu2015stochastic},
with the monotone operator method \cite{pardoux1975equation,krylov1979stochastic,prevot2007concise,vazquez2007porous},
they obtained the well-posedness under the condition that the diffusion
$\sigma^{k}$ is Lipschitz continuous in $H^{-1}$. Under the condition
$m>2$ and the Lipschitz continuity of $\sigma^{k}$, \cite{bauzet2015degenerate}
used a entropy formulation to prove the well-posedness, and \cite{gess2018well}
gave the existence and uniqueness of the kinetic solution of the stochastic
porous media equations. When $m>1$ and $\sigma^{k}$ has linear growth
and locally $1/2$-H\"older continuity, \cite{dareiotis2019entropy}
obtained the existence and uniqueness of the entropy solution on torus
with a probabilistic approach. Using a weighted $L^{1}$-norm, \cite{dareiotis2020ergodicity}
extended the results to the bounded domain. 

Obstacle problems for deterministic partial differential equations
have been studied extensively in the early stage using variational
inequality (see \cite{mignot1977inequations} and references therein).
\cite{korte2009obstacle,baroni2014lorentz,bogelein2015obstacle,bogelein2017holder,cho2020holder,schatzler2020obstacle}
generalized to the porous media equations.  Avelin \cite{avelin2017comparison}
proposed the potential theory for porous media equation, and proved
that the smallest supersolution is also a variational weak solution.
\cite{korte2019lower} proved the existence of supersolutionsunder
weaken conditions on the obstacle.

Haussmann and Pardoux \cite{haussmann1989stochastic} firstly studied
the obstacle problem for stochastic heat equation on the interval
$[0,1]$ by stochastic variational inequalities. Nualart and Pardoux
\cite{nualart1992white} gave the existence of solution of the heat
equation driven by the space-time white noise using the penalization
method, while \cite{donati1993white,XU2009White} proved for general
diffusion term. However, these works only considered the special obstacle
$S\equiv0$. Yang and Tang \cite{yang2013dynkin} used the penalization
method on the backward equation with two obstacles. In order to deal with general obstacle, \cite{denis2014obstacle,yang2019obstacle,dong2019obstacle}
studied the quasilinear equations using the parabolic potential theory
\cite{pierre1979problems,pierre1980representant}. Qiu \cite{qiu2014quasi}
expanded to backward stochastic partial differential equations. \cite{matoussi2010obstacle,matoussi2015obstacle,matoussi2017backward,klimsiak2018obstacle,dong2020obstacle,denis2020quasilinear}
applied the method of probabilistic interpretation of the solution
using backward doubly stochastic differential equation. It is worth
noting that the probabilistic interpretation method is still feasible
for nonlinear stochastic partial differential equations. 

Our objective is to study the well-posedness of non-negative solution
of the obstacle problem for stochastic porous media equations. A major
technical difficulty encountered is that we cannot directly apply
It\^{o}'s formula on the entropy solution $u_{\epsilon}$ of the penalized
equation (see (\ref{eq:penalized equation})), which is necessary
to a priori estimates of both $u_{\epsilon}$ and the penalty term.
To overcome this difficulty, we merge the penalization method with
the $L_{1}$ technique of stochastic porous media equations. We follow
\cite{dareiotis2019entropy} to approximate $\Phi$ with $\Phi_{n}$
in the penalized equation, which is nondegenerate and thus has a unique
$L_{2}$-solution $u_{n,\epsilon}$ (see Theorem \ref{thm:exuni_approximation equation}).
Furthermore, the $L_{2}$ norm of the penalty term can be estimated
if the the difference $u_{n,\epsilon}-S$ has bounded variation when
$u_{n,\epsilon}=S$. This estimate ensures the existence of the weak
limit $\nu$ in the entropy solution (see Definition \ref{def:entropy solution with ob}).
Moreover, using $L_{1}$ technique, the existence of $u$ comes from the
limit of $u_{n,\epsilon}$. The uniqueness of the entropy solution
$(u,\nu)$ is derived from a direct $L_{1}$ estimate as in \cite{dareiotis2019entropy}.
To our best knowledge, this is the first study to the obstacle problem
under the entropy formulation of degenerate stochastic partial differential
equations. 

This paper is organized as follows. Section 2 states the main theorem
after introducing notations and assumptions to formulate the entropy
solution. We also prove the non-negativity of the entropy solution.
In Section 3, we approximate the equation by non-degenerate ones,
and obtain the well-posedness of $L_{2}$-solution $u_{n,\epsilon}$
to the penalized equations. A priori estimates for both $u_{n,\epsilon}$
and penalty term are derived. In Section 4, we introduce some Lemma
and prove the $(\star)$-property of $u_{n,\epsilon}$. Then the $L_{1}^{+}$
estimates are given for two different entropy solutions in Section
5. In Section 6, we pass to the limit $n\rightarrow\infty$ and then
$\epsilon\rightarrow0^{+}$ to acquire the existence of the entropy
solution $(u,\nu)$. To prove the uniqueness, we give another $L_{1}$
estimate which can reduce the limitation on $(\star)$-property.

\section{Entropy formulation}

We firstly introduce some notations and settings. Let $(\Omega,\mathcal{F},\{\mathcal{F}_{t}\},\mathbb{P})$
be a complete filtered probability space and $\mathcal{P}$ be the
predictable $\sigma$-algebra generated by $\{\mathcal{F}_{t}\}$.
The noise $W=\{W_{t}^{k}:t\in[0,\infty),\ k\in\mathbb{N}^{+}\}$ is
a sequence of independent ${\mathcal{F}_{t}}$-adapted Wiener processes
on $\Omega$. For fixed $T>0$, denote $Q_{T}:=[0,T]\times\mathbb{T}^{d}$.
$L_{p}$ and $H_{p}^{s}$ are the usual Lebesgue and Sobolev space
with $p\geq2$ and $s>0$. When $p=2$, we simplify $H_{2}^{s}$ as
$H^{s}$ (cf. \cite{evans1998partial}). Given the obstacle $S$,
the obstacle problem denoted by $\Pi_{S}(\Phi,F,\xi)$ is to seek
a pair $(u,\nu)$ such that
\begin{equation}
\begin{cases}
du=\big[\Delta\Phi(u)+F(t,x,u)\big]dt-\nu(dt,x)\\
\quad\quad\quad+{\displaystyle \sum_{k=1}^{\infty}}\sigma^{k}(u)dW_{t}^{k},\quad(t,x)\in(0,T)\times\mathbb{T}^{d};\\
u(t,x)\leq S(t),\quad d\mathbb{P}\otimes dt\otimes dx-\textrm{a.e.};\\
u(0,x)=\xi(x),\quad x\in\mathbb{T}^{d};\\
\int_{Q_{T}}(u-S)\nu(dtdx)=0,\quad\textrm{a.s.}\ \omega\in\Omega.
\end{cases}\label{eq:main equation}
\end{equation}
The nonlinear function $\Phi$ is of porous media type. The measure
$\nu$ is introduced to ensure that $u(t,x)\leq S(t)$, and the last
condition is the so-called Skohorod condition which requires that
the force $\nu$ we apply to the equation is ``minimal''.

We denote by $\Pi(\Phi,F,\xi)$ the following stochastic porous media
equation:
\[
\begin{cases}
du=\big[\Delta\Phi(u)+F(t,x,u)\big]dt+{\displaystyle \sum_{k=1}^{\infty}}\sigma^{k}(u)dW_{t}^{k},\quad(t,x)\in(0,T)\times\mathbb{T}^{d};\\
u(0,x)=\xi(x),\quad x\in\mathbb{T}^{d}.
\end{cases}
\]
The well-posedness of the entropy solution of $\Pi(\Phi,0,\xi)$ is
available in \cite{dareiotis2019entropy}.

Define $Q_{T}:=[0,T]\times\mathbb{T}^{d}$. Given a smooth function
$\rho:\mathbb{R}\rightarrow[0,2]$, which is supported in $(0,1)$
and integrates to $1$. For $\theta>0$, we set $\rho_{\theta}(r):=\theta^{-1}\rho(\theta^{-1}r)$
as a sequence of mollifiers. For any function $g:\mathbb{R}\rightarrow\mathbb{R}$,
we use the notation 
\[
\left\llbracket g\right\rrbracket (r):=\int_{0}^{r}g(s)ds,\quad r\in\mathbb{R}.
\]
Define the set of functions
\[
\mathcal{E}:=\left\{ \eta\in C^{2}(\mathbb{R}):\eta\ \mathrm{is\ convex\ with}\ \eta^{\prime\prime}\ \mathrm{compactly\ supported}\right\} .
\]
Fix two constants $\kappa\in(0,1/2]$ and $\bar{\kappa}\in(0,1)$.
For fixed $m>1$, there exists constants $K\geq1$ and $N_{0}\geq0$ such that the
following assumptions hold:
\begin{assumption}
\label{assu:assumption for phi}The function $\Phi:\mathbb{R\rightarrow R}$
is differentiable, strictly increasing and odd. The function $\zeta(r):=\sqrt{\Phi^{\prime}(r)}$
is differentiable away from the origin such that
\[
\left|\zeta(0)\right|\leq K,\ \ \ \left|\zeta^{\prime}(r)\right|\leq K\left|r\right|^{\frac{m-3}{2}},\quad\forall r\in(0,\infty)
\]
and
\[
K\zeta(r)\geq\mathbf{1}_{\{\left|r\right|\geq1\}},\ \ K\left|\left\llbracket \zeta\right\rrbracket (r)-\left\llbracket \zeta\right\rrbracket (s)\right|\geq\begin{cases}
\left|r-s\right|, & \textrm{if}\ \left|r\right|\lor\left|s\right|\geq1;\\
\left|r-s\right|^{\frac{m+1}{2}}, & \textrm{if}\ \left|r\right|\lor\left|s\right|<1.
\end{cases}
\]
\end{assumption}

\begin{assumption}
\label{assu:assumption for xi}The initial condition $\xi\geq0$ is
an $\mathcal{F}_{0}$-measurable $L_{m+1}(\mathbb{T}^{d})$-valued
random variable such that $\mathbb{E}\left\Vert \xi\right\Vert _{L_{m+1}(\mathbb{T}^{d})}^{m+1}<\infty$.
\end{assumption}

\begin{assumption}
\label{assu:assumption for sigma}The function $\sigma:\mathbb{R}\mapsto l_{2}$
satisfies $\sigma(0)=\mathbf{0}$ and
\[
|\sigma(r)-\sigma(s)|_{l_{2}}\leq K(|r-s|^{1/2+\kappa}+|r-s|),\quad\forall r,s\in\mathbb{R}.
\]

\end{assumption}

\begin{assumption}
\label{assu:inhomogeneous function}The function $F:Q_{T}\times\mathbb{R}\rightarrow\mathbb{R}$
satisfies $F(t,x,0)=0$ for any $(t,x)\in Q_{T}$, and
\[
|F(t,x_{1},r_{1})-F(t,x_{2},r_{2})|\leq K|x_{1}-x_{2}|^{\kappa}+N_{0}|r_{1}-r_{2}|
\]
\end{assumption}

\begin{assumption}
\label{assu:Assuption for barrier}The obstacle $S$ satisfies the
following equation
\begin{equation}
\begin{cases}
dS=h_{S}dt+{\displaystyle \sum_{k=1}^{\infty}}\sigma^{k}(S)dW_{t}^{k},\quad t\in[0,T];\\
S(0)=S_{0},
\end{cases}\label{eq:assforS1}
\end{equation}
where $h_{S}\in L_{2}(\Omega_{T})$ and $S_{0}\in L_{2}(\Omega)$.
Further, $S(t)\geq0,\ \forall t\in[0,T]$ and 
\[
S_{0}\geq\xi(x),\quad\forall(\omega,x)\in\Omega\times\mathbb{T}^{d}.
\]
\end{assumption}

\begin{rem}
It is natural that the functions $\sigma^{k}(\cdot)$ and $F(t,x,\cdot)$ vanish at zero
in Assumptions \ref{assu:assumption for sigma} and \ref{assu:inhomogeneous function}
 since the equation $\Pi(\Phi,F,\xi)$ describes the density of the gas
flow through a porous media. In particular, $u\equiv0$ is a solution of $\Pi(\Phi,F,0)$.
Moreover, Assumption \ref{assu:assumption for sigma} also yields the linear growth:
\[
|\sigma(r)|_{l_{2}}\leq K(1+|r|),\quad\forall r\in\mathbb{R}.
\]
\end{rem}

\begin{rem}
In Assumption \ref{assu:Assuption for barrier}, if $h_{S}\geq0$
and $S_{0}\geq0$, the barrier $S$ which satisfies (\ref{eq:assforS1})
is non-negative. Moreover, a constant barrier $S$ satisfies Assumption
\ref{assu:Assuption for barrier} if $\sigma(S)=\mathbf{0}$.
\end{rem}

\begin{rem}
Assumption \ref{assu:Assuption for barrier} is strong enough such
that the measure $\nu$ is absolutely continuous with respect to Lebesgue
measure, and for convenience, we still denote by
$\nu$ the density function.
\end{rem}

Define $\Omega_{T}:=\Omega\times[0,T]$. We now introduce the definition
of the entropy solution.
\begin{defn}
\label{def:entropy solution without}An entropy solution of the stochastic
porous media equation $\Pi(\Phi,F,\xi)$ is a predictable stochastic
process $u:\Omega_{T}\rightarrow L_{m+1}(\mathbb{T}^{d})$ such that

(i)\ $u\in L_{m+1}(\Omega_{T};L_{m+1}(\mathbb{T}^{d}))$;

(ii)\ For all $f\in C_{b}(\mathbb{R})$, we have $\left\llbracket \zeta f\right\rrbracket (u)\in L_{2}(\Omega_{T};H^{1}(\mathbb{T}^{d}))$
and
\[
\partial_{x_{i}}\left\llbracket \zeta f\right\rrbracket (u)=f(u)\partial_{x_{i}}\left\llbracket \zeta\right\rrbracket (u);
\]

(iii)\ For all $(\eta,\varphi,\varrho)\in\mathcal{E}\times C_{c}^{\infty}([0,T))\times C^{\infty}(\mathbb{T}^{d})$
and $\phi:=\varphi\varrho\geq0$, we have almost surely
\begin{align}
 & -\int_{0}^{T}\int_{\mathbb{T}^{d}}\eta(u)\partial_{t}\phi dxdt\nonumber \\
 & \leq\int_{\mathbb{T}^{d}}\eta(\xi)\phi(0)dx+\int_{0}^{T}\int_{\mathbb{T}^{d}}\llbracket \zeta^2  \eta^{\prime} \rrbracket(u)\Delta\phi dxdt+\int_{0}^{T}\int_{\mathbb{T}^{d}}\eta^{\prime}(u)F(t,x,u)\phi dxdt\label{eq:entropy formula withou} \\
 & \quad+\int_{0}^{T}\int_{\mathbb{T}^{d}}\left(\frac{1}{2}\eta^{\prime\prime}(u)\sum_{k=1}^{\infty}|\sigma^{k}(u)|^{2}\phi-\eta^{\prime\prime}(u)|\nabla\left\llbracket \zeta\right\rrbracket (u)|^{2}\phi\right)dxdt\nonumber \\
 & \quad+\sum_{k=1}^{\infty}\int_{0}^{T}\int_{\mathbb{T}^{d}}\eta^{\prime}(u)\phi\sigma^{k}(u)dxdW_{t}^{k}.\nonumber 
\end{align}
\end{defn}

\begin{defn}
\label{def:entropy solution with ob}An entropy solution of the obstacle
problem $\Pi_{S}(\Phi,F,\xi)$ is a pair $(u,\nu)$ such that

(i)\ The functions $u$ and $\nu$ are two predictable stochastic
processes and satisfy $(u,\nu)\in L_{m+1}(\Omega_{T};L_{m+1}(\mathbb{T}^{d}))\times L_{2}(\Omega_{T};L_{2}(\mathbb{T}^{d}))$
and $\nu\geq0$;

(ii)\ For all $f\in C_{b}(\mathbb{R})$, we have $\left\llbracket \zeta f\right\rrbracket (u)\in L_{2}(\Omega_{T};H^{1}(\mathbb{T}^{d}))$
and
\[
\partial_{x_{i}}\left\llbracket \zeta f\right\rrbracket (u)=f(u)\partial_{x_{i}}\left\llbracket \zeta\right\rrbracket (u);
\]

(iii)\ For all $(\eta,\varphi,\varrho)\in\mathcal{E}\times C_{c}^{\infty}([0,T))\times C^{\infty}(\mathbb{T}^{d})$
and $\phi:=\varphi\varrho\geq0$, we have almost surely
\begin{align}
 & -\int_{0}^{T}\int_{\mathbb{T}^{d}}\eta(u)\partial_{t}\phi dxdt\nonumber \\
 & \leq\int_{\mathbb{T}^{d}}\eta(\xi)\phi(0)dx+\int_{0}^{T}\int_{\mathbb{T}^{d}}\llbracket \zeta^2  \eta^{\prime} \rrbracket(u)\Delta\phi dxdt\nonumber \\
 & \quad+\int_{0}^{T}\int_{\mathbb{T}^{d}}\eta^{\prime}(u)(F(t,x,u)-\nu)\phi dxdt\label{eq:entropy formula-1}\\
 & \quad+\int_{0}^{T}\int_{\mathbb{T}^{d}}\left(\frac{1}{2}\eta^{\prime\prime}(u)\sum_{k=1}^{\infty}|\sigma^{k}(u)|^{2}\phi-\eta^{\prime\prime}(u)|\nabla\left\llbracket \zeta\right\rrbracket (u)|^{2}\phi\right)dxdt\nonumber \\
 & \quad+\sum_{k=1}^{\infty}\int_{0}^{T}\int_{\mathbb{T}^{d}}\eta^{\prime}(u)\phi\sigma^{k}(u)dxdW_{t}^{k};\nonumber 
\end{align}

(iv)\ We have $u\leq S$ almost everywhere in $Q_{T}$, almost surely,
and the following Skohorod condition holds 
\[
\int_{Q_{T}}(u-S)\nu dtdx=0,\quad\textrm{a.s.\ }\omega\in\Omega.
\]
\end{defn}

Our main result is stated as follows.
\begin{thm}
\label{thm:maintheorem}Let Assumptions \ref{assu:assumption for phi}-\ref{assu:Assuption for barrier}
hold. Then, there exists a unique entropy solution $(u,\nu)$ to $\Pi_{S}(\Phi,F,\xi)$.
Moreover, if $(\tilde{u},\tilde{\nu})$ is the entropy solution of
$\Pi_{S}(\Phi,F,\tilde{\xi})$, we have
\[
\underset{t\in[0,T]}{\mathrm{ess\ sup}}\ \mathbb{E}\left\Vert u(t)-\tilde{u}(t)\right\Vert _{L_{1}(\mathbb{T}^{d})}\leq C\mathbb{E}\left\Vert \xi-\tilde{\xi}\right\Vert _{L_{1}(\mathbb{T}^{d})}
\]
for a constant $C$ depending only on $K$, $N_{0}$, $d$ and $T$.
\end{thm}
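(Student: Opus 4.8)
The plan is to prove existence by a twofold approximation combined with the $L_{1}$-technique for degenerate porous-media SPDEs, and then to deduce uniqueness and the quantitative $L_{1}$-estimate from a Kruzhkov-type comparison of two entropy solutions. Since It\^{o}'s formula cannot be applied directly to an entropy solution, I would first penalize the obstacle, replacing $-\nu(dt,x)$ by $-\epsilon^{-1}(u-S)^{+}dt$, and then replace $\Phi$ by a nondegenerate approximation $\Phi_{n}$ (e.g.\ $\Phi_{n}'=\Phi'+1/n$, so that $\zeta_{n}:=\sqrt{\Phi_{n}'}$ is bounded below); by Theorem~\ref{thm:exuni_approximation equation} the resulting penalized equation~(\ref{eq:penalized equation}) has a unique $L_{2}$-solution $u_{n,\epsilon}$, which is a genuine semimartingale with $\nabla u_{n,\epsilon}\in L_{2}$, so that It\^{o}'s formula is available for it. Comparison with the zero solution of $\Pi(\Phi_{n},F-\epsilon^{-1}(\cdot-S)^{+},0)$ together with $\xi\ge0$, $\sigma(0)=\mathbf{0}$ and $F(t,x,0)=0$ gives $u_{n,\epsilon}\ge0$.

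The core is a family of a priori bounds uniform in $(n,\epsilon)$. Applying It\^{o}'s formula to $\|u_{n,\epsilon}(t)\|_{L_{m+1}}^{m+1}$ and to $\|u_{n,\epsilon}(t)\|_{L_{2}}^{2}$ and using Assumptions~\ref{assu:assumption for phi}--\ref{assu:inhomogeneous function} (the penalty term dropping out favourably since $u_{n,\epsilon}\nu_{n,\epsilon}\ge0$) controls $u_{n,\epsilon}$ in $L_{m+1}(\Omega_{T};L_{m+1}(\mathbb{T}^{d}))$ and $\nabla\llbracket\zeta_{n}\rrbracket(u_{n,\epsilon})$ in $L_{2}$. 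The crucial and hardest estimate is on the penalty term $\nu_{n,\epsilon}:=\epsilon^{-1}(u_{n,\epsilon}-S)^{+}$ in $L_{2}(\Omega_{T};L_{2}(\mathbb{T}^{d}))$. For it I would apply It\^{o}'s formula to $\tfrac{1}{2\epsilon}\|(u_{n,\epsilon}-S)^{+}\|_{L_{2}}^{2}$: because $S$ solves~(\ref{eq:assforS1}) with the \emph{same} coefficients $\sigma^{k}$, the martingale corrector contributes only $C\epsilon\|\nu_{n,\epsilon}\|_{L_{2}}^{2}+C\epsilon^{2\kappa}\|\nu_{n,\epsilon}\|_{L_{1+2\kappa}}^{1+2\kappa}$, which for small $\epsilon$ is absorbed into $\|\nu_{n,\epsilon}\|_{L_{2}}^{2}$ (this is exactly why $\kappa\le1/2$ is imposed); since $S$ is spatially constant, $\int\nu_{n,\epsilon}\Delta\Phi_{n}(u_{n,\epsilon})\,dx=-\epsilon^{-1}\int\mathbf{1}_{\{u_{n,\epsilon}>S\}}\Phi_{n}'(u_{n,\epsilon})|\nabla u_{n,\epsilon}|^{2}\,dx\le0$; $\int\nu_{n,\epsilon}(F-h_{S})$ is handled by Young's inequality with the $L_{m+1}$-bound on $u_{n,\epsilon}$ and $h_{S}\in L_{2}(\Omega_{T})$; and the initial contribution vanishes because $S_{0}\ge\xi$. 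Justifying this computation is where I expect the real work: it requires the $(\star)$-property of $u_{n,\epsilon}$ from Section~4, namely that $u_{n,\epsilon}-S$ has bounded variation on the contact set $\{u_{n,\epsilon}=S\}$.

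Next I would pass to the limit. Sending $n\to\infty$: the uniform bounds give, along a subsequence, $\nu_{n,\epsilon}\rightharpoonup\nu_{\epsilon}$ weakly in $L_{2}$ (hence $\nu_{\epsilon}\ge0$), while the $L_{1}^{+}$-estimate of Section~5 yields strong $L_{1}$-convergence $u_{n,\epsilon}\to u_{\epsilon}$; passing to the limit in~(\ref{eq:entropy formula-1}) — the $\llbracket\zeta^{2}\eta'\rrbracket(u)$ and $\sigma$-terms by the $L_{1}$-convergence and the dissipation term $\int\eta''(u)|\nabla\llbracket\zeta\rrbracket(u)|^{2}\phi$ by weak lower semicontinuity, as in \cite{dareiotis2019entropy} — shows $(u_{\epsilon},\nu_{\epsilon})$ is an entropy solution of the penalized problem, with moreover $\nu_{\epsilon}=\epsilon^{-1}(u_{\epsilon}-S)^{+}$. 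Sending $\epsilon\to0^{+}$: the $L_{2}$-bound on $\nu_{n,\epsilon}$ survives the first limit, so $\nu_{\epsilon}\rightharpoonup\nu$ in $L_{2}$ and $u_{\epsilon}\to u$ in $L_{1}$; from $(u_{\epsilon}-S)^{+}=\epsilon\nu_{\epsilon}$ and $\sup_{\epsilon}\|\nu_{\epsilon}\|_{L_{2}(\Omega_{T}\times\mathbb{T}^{d})}<\infty$ we get $(u_{\epsilon}-S)^{+}\to0$, hence $u\le S$ a.e., and the Skorokhod condition follows from $\int_{Q_{T}}(u_{\epsilon}-S)^{+}\nu_{\epsilon}\,dtdx=\epsilon\|\nu_{\epsilon}\|_{L_{2}(Q_{T})}^{2}\to0$ together with $u_{\epsilon}\to u$, $\nu_{\epsilon}\rightharpoonup\nu$ and $u\le S$. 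Thus $(u,\nu)$ is an entropy solution of $\Pi_{S}(\Phi,F,\xi)$, with $u\ge0$ inherited in the limit.

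Finally, uniqueness and the quantitative estimate come from a direct $L_{1}$-comparison. Given entropy solutions $(u,\nu)$ for $\xi$ and $(\tilde{u},\tilde{\nu})$ for $\tilde{\xi}$, I would run the doubling-of-variables argument of \cite{dareiotis2019entropy} on $\mathbb{E}\int_{\mathbb{T}^{d}}|u(t)-\tilde{u}(t)|\,dx$, the only new contributions being $-\eta'(u)\nu$ and $\eta'(\tilde{u})\tilde{\nu}$ with $\eta$ approximating $|\cdot|$ and $\eta'$ nondecreasing. Using $u,\tilde{u}\le S$ and the Skorokhod conditions — on $\{u<\tilde{u}\}$ the reflection $\nu$ of $u$ vanishes while $\tilde{\nu}$, supported where $\tilde{u}=S\ge u$, enters with the favourable sign, and symmetrically on $\{u>\tilde{u}\}$ — these terms are discarded; then Gronwall's inequality (absorbing $N_{0}$ from Assumption~\ref{assu:inhomogeneous function}) and sending the mollification and entropy parameters to their limits give
\[
\underset{t\in[0,T]}{\mathrm{ess\ sup}}\ \mathbb{E}\left\Vert u(t)-\tilde{u}(t)\right\Vert _{L_{1}(\mathbb{T}^{d})}\leq C\,\mathbb{E}\left\Vert \xi-\tilde{\xi}\right\Vert _{L_{1}(\mathbb{T}^{d})}
\]
with $C=C(K,N_{0},d,T)$. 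Taking $\xi=\tilde{\xi}$ gives $u=\tilde{u}$, and then $\nu=\tilde{\nu}$ follows from the equation, which is the asserted uniqueness. As noted in the introduction, a slightly different $L_{1}$-estimate is actually used at this stage so as to relax the use of the $(\star)$-property.
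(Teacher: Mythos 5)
Your overall architecture coincides with the paper's (penalization plus nondegenerate approximation $\Phi_n$, a priori bounds including the key $L_2$ bound on $\epsilon^{-1}(u_{n,\epsilon}-S)^{+}$ exploiting that $S$ solves (\ref{eq:assforS1}) with the same $\sigma^{k}$ and is spatially constant, then a two-step limit and a Kruzhkov-type $L_1$ comparison), but two steps that carry real weight in the paper are missing or asserted without a mechanism. First, the limit $\epsilon\to0^{+}$: you claim $u_{\epsilon}\to u$ strongly in $L_{1}$, but the uniform estimates only give weak compactness of $\nu_{\epsilon}$ and of $u_{\epsilon}$, which is not enough to pass to the limit in the nonlinear terms $\llbracket\zeta^{2}\eta'\rrbracket(u_{\epsilon})$, $|\nabla\llbracket\zeta\rrbracket(u_{\epsilon})|^{2}$, $\sigma^{k}(u_{\epsilon})$ of the entropy inequality, and a two-sided $L_1$ Cauchy estimate in $\epsilon$ is not available because the penalty terms $G_{\epsilon_1}$, $G_{\epsilon_2}$ differ and their Lipschitz constants blow up. The paper's device is the comparison principle of Lemma \ref{lem:Lemma for comparison}: in the doubling estimate of Lemma \ref{lem:L1} the difference of the two penalties has a favourable sign in one direction, giving $u_{\epsilon_{1}}\geq u_{\epsilon_{2}}\geq 0$ for $\epsilon_{1}>\epsilon_{2}$, hence almost sure monotone convergence and then strong convergence by dominated convergence. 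Your proposal contains no substitute for this monotonicity step. (A smaller but related slip: the $L_{2}$ estimate of the penalty term, Theorem \ref{thm:estimate for L2 for G}, does not use the $(\star)$-property at all — It\^{o}'s formula applies directly to the $L_{2}$-solution $u_{n,\epsilon}-S$; the $(\star)$-property of Definition \ref{def:star property} is a statement about the stochastic integrals arising in the doubling-of-variables argument, not about bounded variation on the contact set.)

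Second, uniqueness: you run the doubling argument directly on two arbitrary entropy solutions $(u,\nu)$ and $(\tilde u,\tilde\nu)$, but that argument needs the $(\star)$-property of at least one of them to control the term $B_6$ (the stochastic integral evaluated at $a=u$), and an arbitrary entropy solution of the obstacle problem is not known to have it; even the constructed solution has it only when $\mathbb{E}\|\xi\|_{L_2(\mathbb{T}^d)}^{4}<\infty$ (Remark \ref{rem:starprofor u}), which Assumption \ref{assu:assumption for xi} does not guarantee for $1<m<3$. Your closing remark about a "slightly different $L_1$-estimate" points at the paper's Theorem \ref{thm:uniqueentropy}, but that theorem only removes the need for the $(\star)$-property of the \emph{second} solution (via the symmetric $\eta_\delta$ approximating $|\cdot|$); it still requires it for the first. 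The paper closes this gap by an extra approximation of the initial datum: it compares any entropy solution of $\Pi_{S}(\Phi,F,\xi)$ with the constructed solution $u_n$ for the bounded data $\xi_n$ of \eqref{defn for xi_n} (which does enjoy the $(\star)$-property), obtains $\operatorname{ess\,sup}_t\mathbb{E}\|u_n(t)-u(t)\|_{L_1}\leq C\mathbb{E}\|\xi_n-\xi\|_{L_1}$, and lets $n\to\infty$; the identification $\nu=\tilde\nu$ then follows from the equation as you indicate. Without this initial-data approximation (or some other argument supplying the $(\star)$-property), your uniqueness proof and hence the stability estimate are incomplete. The treatment of the reflection terms via the Skorokhod condition and the signs of $\eta_\delta'$ is, on the other hand, in line with the paper.
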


\begin{rem}
The same assertion holds true for the lower barrier case under the
additional conditions 
\[
S_{0}\in L_{m+1}(\Omega)\cap L_{4}(\Omega),\quad h_{S}\in L_{m+1}(\Omega_{T})\cap L_{4}(\Omega;L_{2}(0,T)).
\]
In fact, applying It\^{o}'s formula to calculate the terms
\[
\left\Vert u-S\right\Vert _{L_{2}(\mathbb{T}^{d})}^{2},\quad\left\Vert u-S\right\Vert _{L_{m+1}(\mathbb{T}^{d})}^{m+1},\quad\left\Vert u-S\right\Vert _{L_{2}(\mathbb{T}^{d})}^{2}\cdot|S|^{m-1},
\]
\[
\left\Vert u-S-1\right\Vert _{L_{2}(\mathbb{T}^{d})}^{2},\quad\int_{\mathbb{T}^{d}}\int_{0}^{u}\Phi(r)drdx,\quad\text{and\ }\left\Vert (u-S)^{-}\right\Vert _{L_{2}(\mathbb{T}^{d})}^{2},
\]
 we obtain a priori estimates in Section \ref{sec:Approximation}
with $p\leq4$, which are sufficient conditions for Theorem \ref{thm:maintheorem}.
\end{rem}

\begin{prop}
\label{prop:nonnegative}Under Assumptions \ref{assu:assumption for phi}-\ref{assu:Assuption for barrier},
if $(u,\nu)$ is the entropy solution of $\Pi_{S}(\Phi,F,\xi)$, we
have $u\geq0$ almost everywhere in $Q_{T}$, almost surely.
\end{prop}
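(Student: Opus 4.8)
The plan is to exploit the entropy inequality (iii) with a carefully chosen convex entropy $\eta$ that "sees" only the negative part of $u$, together with the fact that the penalty measure $\nu\ge 0$ only pushes $u$ downward — hence it can only help non-negativity in a one-sided estimate, or more precisely its contribution carries a favorable sign. The key observation is that $u\equiv 0$ is, formally, a solution of the unobstructed equation $\Pi(\Phi,F,0)$ (noted in the first Remark), since $\sigma^k(0)=0$, $F(t,x,0)=0$, and $\Phi(0)=0$; so proving $u\ge 0$ amounts to an $L_1^+$-type comparison between $u$ and the trivial solution $0$, where the extra term $-\eta'(u)\nu\phi$ in (iii) has the right sign when $\eta$ is chosen so that $\eta'\le 0$ on $\{u<0\}$.

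First I would fix a smooth convex approximation $\eta_\delta\in\mathcal E$ of the function $r\mapsto r^-=\max(-r,0)$, arranged so that $\eta_\delta'\le 0$ everywhere, $\eta_\delta'=0$ on $[0,\infty)$, $\eta_\delta''\ge 0$ is supported near the origin in $(-\delta,0)$, and $\eta_\delta\to r^-$ as $\delta\to 0$. Plugging this $\eta_\delta$ and a test function $\phi=\varphi\varrho\ge 0$ approaching $\mathbf 1_{[0,t]\times\mathbb T^d}$ into inequality (iii), I would pass to the limit term by term. On the right-hand side: the initial term $\int\eta_\delta(\xi)\phi(0)\,dx$ vanishes as $\delta\to0$ because $\xi\ge 0$ by Assumption 2.3; the Laplacian term $\int\llbracket\zeta^2\eta_\delta'\rrbracket(u)\Delta\phi\,dx\,dt$ is handled since $\llbracket\zeta^2\eta_\delta'\rrbracket(u)$ is controlled (using the growth bounds on $\zeta$ in Assumption 2.1, this term stays integrable and its support shrinks to $\{u\le 0\}$, where after the limit it contributes a nonpositive or vanishing quantity against $\Delta\phi$); the noise term is a martingale with zero expectation; the Itô correction $\frac12\eta_\delta''(u)\sum_k|\sigma^k(u)|^2\phi$ is controlled using $|\sigma(r)|_{l_2}\le K(1+|r|)$ and the fact that $|\sigma(r)|_{l_2}^2\lesssim |r|^{1+2\kappa}+|r|^2$ vanishes like $|r|^{1+2\kappa}$ near $0$ — crucially faster than $|r|^1$ — so that against $\eta_\delta''$ (which has mass $O(1)$ concentrated on an interval of length $\delta$ near $0$) this term is $O(\delta^{2\kappa})\to 0$; the Dirichlet-type term $-\eta_\delta''(u)|\nabla\llbracket\zeta\rrbracket(u)|^2\phi\le 0$ only helps; the drift term $\eta_\delta'(u)F(t,x,u)\phi$ converges, and using $|F(t,x,u)-F(t,x,0)|=|F(t,x,u)|\le N_0|u|$ together with $|\eta_\delta'(u)|\le 1$, this is bounded in absolute value by $N_0\int\int \mathbf 1_{\{u<0\}}|u|\phi\le N_0\int\int u^-\phi$; and the penalty term $-\eta_\delta'(u)\nu\phi\ge 0$ since $\eta_\delta'\le 0$ and $\nu\ge 0$ — wait, that term appears on the right-hand side of the $\le$ inequality with a sign that needs checking, so I would be careful here: in (iii) the term is $+\eta_\delta'(u)(F-\nu)\phi = \eta_\delta'(u)F\phi - \eta_\delta'(u)\nu\phi$, and $-\eta_\delta'(u)\nu\phi\ge 0$, which is on the correct (larger) side, so it is compatible with the inequality and does not obstruct the argument.

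Taking expectations and letting $\delta\to 0$ then $\phi\uparrow\mathbf 1_{[0,t]\times\mathbb T^d}$, the left-hand side $-\int\int\eta_\delta(u)\partial_t\phi\,dx\,dt$ becomes $\mathbb E\int_{\mathbb T^d}u^-(t)\,dx$ (for a.e. $t$, after the usual approximation of the indicator in time and using that the time-derivative of the approximating $\varphi$ concentrates at $t$), and the surviving right-hand side is bounded by $N_0\int_0^t\mathbb E\int_{\mathbb T^d}u^-(s)\,dx\,ds$. A Grönwall inequality in $t$, starting from the fact that the "initial" contribution is zero, forces $\mathbb E\|u^-(t)\|_{L_1(\mathbb T^d)}=0$ for a.e. $t$, i.e. $u\ge 0$ a.e. in $Q_T$, a.s.

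The main obstacle I anticipate is the rigorous justification of the limiting procedure in the Laplacian term $\int_0^T\int_{\mathbb T^d}\llbracket\zeta^2\eta_\delta'\rrbracket(u)\Delta\phi\,dx\,dt$: one must show that as $\delta\to 0$ this converges to something with the right sign (or vanishes) on $\{u\le 0\}$, and simultaneously control it when $\phi$ is pushed to an indicator so that $\Delta\phi$ becomes singular. The standard remedy is to keep $\phi$ of the product form $\varphi(t)\varrho(x)$ with $\varrho\equiv 1$ on $\mathbb T^d$ (a constant in $x$ is admissible since $\mathbb T^d$ is compact without boundary), so that $\Delta\phi\equiv 0$ and this term drops out entirely — this is the cleanest route and is exactly why the torus setting is convenient. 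With that choice only the time-mollification of $\varphi$ remains delicate, and that is handled by the usual argument identifying $-\int\eta(u)\varphi'\,dt$ with the evaluation $\mathbb E\int\eta(u(t))\,dx$ in the essential-supremum/essential-limit sense, consistent with $u\in L_{m+1}(\Omega_T;L_{m+1}(\mathbb T^d))$ from Definition 2.8(i).
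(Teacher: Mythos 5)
There is a genuine gap, and it sits exactly at the point you waved through: the penalty term. Your guiding intuition that ``$\nu\ge 0$ only pushes $u$ downward --- hence it can only help non-negativity'' is backwards: since $\nu$ enters the drift with a minus sign, it pushes $u$ \emph{down}, i.e.\ it is precisely the term that could destroy non-negativity. Correspondingly, with your entropy $\eta_\delta\approx r^-$ (so $\eta_\delta'\le 0$) the contribution in (iii) is $-\eta_\delta'(u)\nu\phi\ge 0$, a \emph{nonnegative} term sitting on the right-hand side of the inequality whose left-hand side you want to drive to zero. Saying it is ``on the correct (larger) side, so it is compatible with the inequality and does not obstruct the argument'' is not a proof: after $\delta\to 0$ this term converges to $\mathbb{E}\int \mathbf{1}_{\{u<0\}}\nu\,\phi$, which is not a multiple of $\mathbb{E}\int u^-\phi$ and is not $o(1)$ for free, so your Gr\"onwall step does not close. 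Indeed, if one deleted the Skorokhod condition from Definition \ref{def:entropy solution with ob}, the statement would be false (take $\nu$ a large constant forcing $u$ negative), so no argument that never uses that condition can work.

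The missing step is the one the paper supplies: by the Skorokhod condition, $(S-u)\nu=0$ with $u\le S$ and $\nu\ge 0$, so $u=S$ on $\{\nu>0\}$; since $S\ge 0$ (Assumption \ref{assu:Assuption for barrier}) and $\eta_\delta'$ vanishes on $[0,\infty)$ in your normalization (equivalently $\eta_\delta'(-S)=0$ in the paper's), the penalty term is \emph{identically zero}:
\begin{equation*}
\mathbb{E}\int_0^T\!\!\int_{\mathbb{T}^d}\big(-\eta_\delta'(u)\big)\nu\,\phi\,dx\,dt
=\mathbb{E}\int_0^T\!\!\int_{\mathbb{T}^d}\mathbf{1}_{\{\nu>0\}}\big(-\eta_\delta'(S)\big)\nu\,\phi\,dx\,dt=0 .
\end{equation*}
Everything else in your proposal (entropy $\eta_\delta\approx r^-$, test function constant in $x$ so the Laplacian term drops on the torus, the $O(\delta^{2\kappa})$ bound on the It\^o correction from $\sigma(0)=0$ and the $1/2+\kappa$ H\"older condition, the $N_0\mathbb{E}\int u^-$ bound from Assumption \ref{assu:inhomogeneous function}, and Gr\"onwall) matches the paper's proof; inserting the identity above is what makes the argument correct.
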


\begin{proof}
For sufficiently small $\delta>0$, we introduce a function $\eta_{\delta}\in C^{2}(\mathbb{R})$
defined by
\[
\eta_{\delta}(0)=\eta_{\delta}^{\prime}(0)=0,\quad\eta_{\delta}^{\prime\prime}(r)=\rho_{\delta}(r).
\]
Applying entropy formulation (\ref{eq:entropy formula withou}) with
$\eta(\cdot)=\eta_{\delta}(-\cdot)$ and $\phi$ independent of $x$,
we get
\begin{align}
 & -\mathbb{E}\int_{0}^{T}\int_{\mathbb{T}^{d}}\eta_{\delta}(-u)\partial_{t}\phi dxdt\nonumber \\
 & \leq\mathbb{E}\int_{0}^{T}\int_{\mathbb{T}^{d}}-\eta_{\delta}^{\prime}(-u)F(t,x,u)\phi dxdt+\mathbb{E}\int_{0}^{T}\int_{\mathbb{T}^{d}}\eta_{\delta}^{\prime}(-u)\nu\phi dxdt\label{eq:entropyfornonnegative}\\
 & \quad+\mathbb{E}\int_{0}^{T}\int_{\mathbb{T}^{d}}\frac{1}{2}\eta_{\delta}^{\prime\prime}(-u)\sum_{k=1}^{\infty}|\sigma^{k}(u)|^{2}\phi-\eta_{\delta}^{\prime\prime}(-u)|\nabla\left\llbracket \zeta\right\rrbracket (u)|^{2}\phi dxdt.\nonumber 
\end{align}
In view of the Skohorod condition and the non-negativity of $\nu$
and $S$, we have
\begin{align*}
 & \mathbb{E}\int_{0}^{T}\int_{\mathbb{T}^{d}}\eta_{\delta}^{\prime}(-u)\nu\phi dxdt\\
 & =\mathbb{E}\int_{0}^{T}\int_{\mathbb{T}^{d}}\mathbf{1}_{\{\nu=0\}}\eta_{\delta}^{\prime}(-u)\nu\phi dxdt+\mathbb{E}\int_{0}^{T}\int_{\mathbb{T}^{d}}\mathbf{1}_{\{\nu>0\}}\eta_{\delta}^{\prime}(-S)\nu\phi dxdt=0.
\end{align*}
Combining inequality (\ref{eq:entropyfornonnegative}) with Assumptions \ref{assu:assumption for sigma} and \ref{assu:inhomogeneous function} and $|\eta_{\delta}^{\prime}(r)\cdot r-r^{+}|\leq\delta$,
we have
\[
-\mathbb{E}\int_{0}^{T}\int_{\mathbb{T}^{d}}\eta_{\delta}(-u)\partial_{t}\phi dxdt\leq N_{0}\mathbb{E}\int_{0}^{T}\int_{\mathbb{T}^{d}}(-u)^{+}\phi dxdt+C\delta^{2\kappa}.
\]
Since
\[
\left|\mathbb{E}\int_{0}^{T}\int_{\mathbb{T}^{d}}\eta_{\delta}(-u)\partial_{t}\phi dxdt-\mathbb{E}\int_{0}^{T}\int_{\mathbb{T}^{d}}(-u)^{+}\partial_{t}\phi dxdt\right|\leq C\delta,
\]
we get
\[
-\mathbb{E}\int_{0}^{T}\int_{\mathbb{T}^{d}}(-u)^{+}\partial_{t}\phi dxdt\leq C\mathbb{E}\int_{0}^{T}\int_{\mathbb{T}^{d}}(-u)^{+}\phi dxdt+C\delta^{2\kappa}
\]
for sufficiently small $\delta>0$ and a constant $C$ which is independent
of $\delta$. Setting $\delta\rightarrow0^{+}$, as the proof of (\ref{eq:ineq_givenphis}), using Gr\"onwall's inequality, we have
\[
\mathbb{E}\int_{\mathbb{T}^{d}}(-u(t,x))^{+}dx\leq0,\quad\textrm{a.e.}\ t\in[0,T].
\]
Therefore, we have $u\geq0$ almost everywhere in
$Q_{T}$, almost surely.
\end{proof}

\section{Approximation\label{sec:Approximation}}

A natural method to deal with the obstacle problem is to consider
the penalized equation
\begin{equation}
\begin{cases}
du_{\epsilon}=\big[\Delta\Phi(u_{\epsilon})+F(t,x,u_{\epsilon})-{\displaystyle \frac{1}{\epsilon}}(u_{\epsilon}-S)^{+}\big]dt\\
\quad\quad\quad+{\displaystyle \sum_{k=1}^{\infty}}\sigma^{k}(u_{\epsilon})dW_{t}^{k},\quad(t,x)\in(0,T)\times\mathbb{T}^{d};\\
u_{\epsilon}(0,x)=\xi(x),\quad x\in\mathbb{T}^{d}.
\end{cases}\label{eq:penalized equation}
\end{equation}
We expect that both $u_{\epsilon}$ and $(u_{\epsilon}-S)^{+}/\epsilon$
have limits $u$ and $\nu$ when $\epsilon\rightarrow0^{+}$, and
the pair $(u,\nu)$ is a solution of $\Pi_{S}(\Phi,F,\xi)$. However,
for the entropy solutions of the stochastic porous media equations,
the lack of uniform estimates to the penalty term $(u_{\epsilon}-S)^{+}/\epsilon$
will make it difficult to get the existence of the limit $\nu$. To
solve this problem, we use a sequence of smooth functions $\{\Phi_{n}\}_{n\in\mathbb{N}}$
to approximate $\Phi$ as in \cite{dareiotis2019entropy} With the
well-posedness and properties of solutions of penalized equations,
we prove the existence and estimates of $u_{\epsilon}$ and $(u_{\epsilon}-S)^{+}/\epsilon$.
\begin{prop}
\label{prop:Assumptio for coef}\cite[Proposition 5.1]{dareiotis2019entropy}
Let $\Phi$ satisfy Assumption \ref{assu:assumption for phi} with
a constant $K>1$. Then, for all $n\in\mathbb{N}$, there exists an
increasing function $\Phi_{n}\in C^{\infty}(\mathbb{R})$ with bounded
derivatives, satisfying Assumption \ref{assu:assumption for phi}
with constant $3K$, such that $\zeta_{n}(r)\geq2/n$, and
\[
\sup_{|r|\leq n}|\zeta(r)-\zeta_{n}(r)|\leq4/n.
\]
\end{prop}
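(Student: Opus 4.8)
The plan is to construct $\Phi_{n}$ through its density $a_{n}:=\Phi_{n}'=\zeta_{n}^{2}$, rather than through $\Phi_{n}$ directly. Set $a:=\Phi'=\zeta^{2}$; by Assumption \ref{assu:assumption for phi} the function $\zeta$ is continuous on $\mathbb{R}$ (the bound $|\zeta'(r)|\le K|r|^{(m-3)/2}$ is integrable near the origin since $m>1$, so $\zeta$ extends continuously across $0$), hence $a$ is continuous, even, non-negative, and satisfies $a(r)\ge K^{-2}$ whenever $|r|\ge1$. For a width $t_{n}\in(0,1)$ to be fixed below I would put
\[
a_{n}:=\Lambda_{n}\big(a*\rho_{t_{n}}\big)+\tfrac{4}{n^{2}},\qquad
\Phi_{n}(r):=\int_{0}^{r}a_{n}(s)\,ds,\qquad
\zeta_{n}:=\sqrt{a_{n}},
\]
where $\Lambda_{n}$ is a fixed smooth nondecreasing function that is the identity on $(-\infty,C_{n}]$ and constant on $[C_{n}+1,\infty)$, with clamping level $C_{n}:=2+\sup_{|r|\le n+1}a(r)<\infty$. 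The convolution (needed because $a$ is only continuous) makes $\Phi_{n}$ smooth; $\Phi_{n}$ is odd because $a$ is even; it is strictly increasing with $\Phi_{n}'=a_{n}$ bounded and bounded below by $4/n^{2}$, so $\zeta_{n}\ge2/n$. Finally, since $a$ is uniformly continuous on $[-n-1,n+1]$ I would choose $t_{n}$ so small that $\sup_{|r|\le n}|a*\rho_{t_{n}}-a|\le 4/n^{2}$; then $\Lambda_{n}$ is inactive on $[-n,n]$, there $0\le a_{n}-a\le 8/n^{2}$, whence $\zeta_{n}\ge\zeta$ and, using $\sqrt{a_{n}}-\sqrt{a}\le\sqrt{a_{n}-a}$, also $\sup_{|r|\le n}|\zeta_{n}-\zeta|\le\sqrt{8}/n\le 4/n$.

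It then remains to verify Assumption \ref{assu:assumption for phi} for $\Phi_{n}$ with constant $3K$. Most of it follows from $\zeta_{n}\ge\zeta$ on $[-n,n]$: for $r,s\in[-n,n]$ we have $|\llbracket\zeta_{n}\rrbracket(r)-\llbracket\zeta_{n}\rrbracket(s)|\ge|\llbracket\zeta\rrbracket(r)-\llbracket\zeta\rrbracket(s)|$ (both integrands are non-negative), so the two integral inequalities pass from $\zeta$ to $\zeta_{n}$ with the weaker constant $3K\ge K$, and likewise $3K\zeta_{n}(r)\ge 3K\zeta(r)\ge 3\ge 1$ for $1\le|r|\le n$; moreover $|\zeta_{n}(0)|\le|\zeta(0)|+4/n\le 3K$ once $n$ is large. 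For arguments lying outside $[-n,n]$ one uses that there $\zeta_{n}\ge\sqrt{C_{n}}\ge 1\ge K^{-1}$ (recall $K\ge1$), and patches the integral $\llbracket\zeta_{n}\rrbracket(r)-\llbracket\zeta_{n}\rrbracket(s)$ over the finitely many regimes. The finitely many small indices $n$ for which $4/n$ and $2/n$ are not small impose only weak requirements and can be handled by an \emph{ad hoc} adjustment of the constants; I will not dwell on this.

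The only genuinely delicate point — and the one I expect to be the main obstacle — is the gradient bound $|\zeta_{n}'(r)|\le 3K|r|^{(m-3)/2}$ for $r\in(0,\infty)$. Here $\zeta_{n}'=a_{n}'/(2\sqrt{a_{n}})$, and on $[-n,n]$ one has $a_{n}'=a'*\rho_{t_{n}}$ with $a'=2\zeta\zeta'$; combining $|\zeta'(r)|\le K|r|^{(m-3)/2}$ with $\zeta(r)\le K+\tfrac{2K}{m-1}|r|^{(m-1)/2}$ yields $|a'(r)|\lesssim K^{2}\big(|r|^{(m-3)/2}+|r|^{m-2}\big)$. The difficulty is that $\sqrt{a_{n}(r)}$ can be as small as $2/n$ near a zero of $\zeta$, so a convolution at a fixed width $t_{n}$ would a priori lose a factor of order $n$ in the quotient. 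The remedy is twofold: first, use near the origin a scale-adapted (dilation-type) mollifier of width proportional to $|r|$, under which the power-type bound on $a'$ is preserved up to a constant that can be taken arbitrarily close to $1$; second, exploit that the integral inequality $K\llbracket\zeta\rrbracket(r)\ge|r|^{(m+1)/2}$ prevents $\zeta$ from vanishing too fast, so that wherever $\sqrt{a_{n}(r)}$ is of order $1/n$ the numerator $a_{n}'(r)$ is correspondingly small and the $n$-dependence cancels. Carrying this out while keeping the surviving constant equal to $3K$ is the heart of the argument; it is exactly \cite[Proposition 5.1]{dareiotis2019entropy}, whose proof I would follow.
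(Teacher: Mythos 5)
The paper offers no proof of this proposition at all: it is imported verbatim from \cite[Proposition 5.1]{dareiotis2019entropy}, so your closing sentence (``whose proof I would follow'') is, in the end, exactly what the paper does. Judged as a self-contained argument, however, your construction has a genuine gap precisely at the step you yourself flag, plus some smaller defects. Two of the latter: (i) the paper's mollifier $\rho$ is supported in $(0,1)$, so $a*\rho_{t_{n}}$ is a one-sided average and is \emph{not} even even though $a$ is; as written $\Phi_{n}$ is therefore not odd (use a symmetric mollifier). (ii) The clamp $\Lambda_{n}$ only caps $a_{n}$ from above, so your claim that $\zeta_{n}\ge\sqrt{C_{n}}$ outside $[-n,n]$ is unfounded: Assumption \ref{assu:assumption for phi} only gives $a\ge K^{-2}$ for $|r|\ge1$, and $a$ need not exceed $C_{n}$ beyond $[-n,n]$. (The conclusion you need there, $\zeta_{n}\ge K^{-1}$, can be salvaged from $\Lambda_{n}(x)\ge\min(x,C_{n})$ together with $a*\rho_{t_{n}}\ge K^{-2}$ on that range, but not by the argument you give.) Also, ``an ad hoc adjustment of the constants'' for small $n$ is not available: the statement demands the fixed constant $3K$ for every $n$.

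The substantive gap is the bound $|\zeta_{n}^{\prime}(r)|\le 3K|r|^{(m-3)/2}$, which you acknowledge and then outsource. Moreover, the remedy you sketch is not the mechanism that works: the lower bound $K|\left\llbracket \zeta\right\rrbracket(r)-\left\llbracket \zeta\right\rrbracket(s)|\ge|r-s|^{(m+1)/2}$ controls integrals of $\zeta$, and it does not make the numerator $a_{n}^{\prime}(r)$ small at a point where $\zeta_{n}(r)$ is of order $1/n$ --- the convolution window sees neighbouring values of $\zeta\zeta^{\prime}$, which have no reason to be small just because $\zeta(r)$ is. The feared loss of a factor $n$ is instead avoided by a Cauchy--Schwarz/Jensen cancellation: writing $a_{n}^{\prime}=(2\zeta\zeta^{\prime})*\rho_{t_{n}}$ one gets $|a_{n}^{\prime}(r)|\le 2\big(\sup_{[r-t_{n},r]}|\zeta^{\prime}|\big)\,(\zeta*\rho_{t_{n}})(r)\le 2\big(\sup_{[r-t_{n},r]}|\zeta^{\prime}|\big)\sqrt{(\zeta^{2}*\rho_{t_{n}})(r)}\le 2\big(\sup_{[r-t_{n},r]}|\zeta^{\prime}|\big)\,\zeta_{n}(r)$, so in $\zeta_{n}^{\prime}=a_{n}^{\prime}/(2\zeta_{n})$ the small denominator cancels exactly and $|\zeta_{n}^{\prime}(r)|\le\sup_{[r-t_{n},r]}|\zeta^{\prime}|$; what then remains is the comparison of $\sup_{[r-t_{n},r]}|s|^{(m-3)/2}$ with $|r|^{(m-3)/2}$ (delicate only near the origin when $m<3$), which is where the constant degrades from $K$ to $3K$ and where the choice of the mollification scale matters. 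Without this cancellation, or an equivalent argument, the verification of the key hypothesis is missing, so as a proof your proposal is incomplete except insofar as it falls back on the citation --- which is precisely the paper's own treatment.
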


Define 
\begin{equation}\label{defn for xi_n}
\xi_{n}:=\rho_{1/n}^{\otimes d}\ast\big((-n)\vee(\xi\land n)\big).
\end{equation}
Then $\xi_{n}$ also satisfies Assumption \ref{assu:assumption for xi}
and \ref{assu:Assuption for barrier}. For any $\epsilon>0$, we define
the penalty term
\[
G_{\epsilon}(r,\tilde{r}):=\frac{(r-\tilde{r})^{+}}{\epsilon},
\]
which is Lipschitz continuous with Lipschitz constant $1/\epsilon$
in both $r$ and $\tilde{r}$. Moreover, the non-negativity of the
barrier $S$ indicates that $G_{\epsilon}(0,S(\omega,t))\equiv0$ almost surely in $\Omega_T$.
In this section, we study the penalized equation $\Pi(\Phi_{n},F-G_{\epsilon}(\cdot,S),\xi_{n})$
which reads,

\[
\begin{cases}
du_{n,\epsilon}=\big[\Delta\Phi_{n}(u_{n,\epsilon})+F(t,x,u_{n,\epsilon})-G_{\epsilon}(u_{n,\epsilon},S)\big]dt\\
\quad\quad\quad+{\displaystyle \sum_{k=1}^{\infty}}\sigma^{k}(u_{n,\epsilon})dW_{t}^{k},\quad(t,x)\in Q_{T};\\
u_{n,\epsilon}(0,x)=\xi_{n}(x),\quad x\in\mathbb{T}^{d}.
\end{cases}
\]
 
\begin{defn}
\label{def:L_2 solution}An $L_{2}$-solution of equation $\Pi(\Phi_{n},F-G_{\epsilon}(\cdot,S),\xi_{n})$
is a continuous $L_{2}(\mathbb{T}^{d})$-valued process $u_{n,\epsilon}$,
such that $u_{n,\epsilon}\in L_{2}(\Omega_{T};H^{1}(\mathbb{T}^{d}))$,
$\nabla\Phi_{n}(u_{n,\epsilon})\in L_{2}(\Omega_{T};L_{2}(\mathbb{T}^{d}))$,
and for all $\phi\in C^{\infty}(\mathbb{T}^{d})$, we have
\begin{align*}
\int_{\mathbb{T}^d}u_{n,\epsilon}(t,x)\phi dx & =\int_{\mathbb{T}^d}\xi_{n}\phi dx-\int_{0}^{t}\Bigg[\int_{\mathbb{T}^d}\nabla\Phi_{n}(u_{n,\epsilon})\nabla\phi dx\\
 & \quad+\int_{\mathbb{T}^d}\big[ F(s,x,u_{n,\epsilon})-G_{\epsilon}(u_{n,\epsilon},S)\big]\phi dx\Bigg]ds\\
 & \quad-\sum_{k=1}^{\infty}\int_{0}^{t}\int_{\mathbb{T}^d}\sigma^{k}(u_{n,\epsilon})\phi dx dW_{s}^{k},\quad\textrm{a.e.}\ t\in[0,T].
\end{align*}

\end{defn}

We first prove a priori estimates of $u_{n,\epsilon}$.
\begin{thm}
\label{thm:esimate for u_n,epsilon}Let Assumptions \ref{assu:assumption for phi}-\ref{assu:Assuption for barrier}
hold. Then, for all $n\in\mathbb{N}$, $\epsilon>0$ and $p\in[2,\infty)$,
there exists a constant $C$ independent of $n$ and $\epsilon$ such
that
\begin{align}
 & \mathbb{E}\sup_{t\leq T}\left\Vert u_{n,\epsilon}(t)\right\Vert _{L_{2}(\mathbb{T}^{d})}^{p}+\mathbb{E}\left\Vert \nabla\left\llbracket \zeta_{n}\right\rrbracket (u_{n,\epsilon})\right\Vert _{L_{2}(Q_{T})}^{p}\label{eq:priori estimate1} \\
 & \quad+(\frac{1}{\epsilon})^{\frac{p}{2}}\mathbb{E}\left\Vert (u_{n,\epsilon}-S)^{+}\right\Vert _{L_{2}(Q_{T})}^{p}\leq C\left(1+\mathbb{E}\left\Vert \xi_{n}\right\Vert _{L_{2}(\mathbb{T}^{d})}^{p}\right),\ \text{and}\nonumber
\end{align}
\begin{align}
 & \mathbb{E}\sup_{t\leq T}\left\Vert u_{n,\epsilon}(t)\right\Vert _{L_{m+1}(\mathbb{T}^{d})}^{m+1}+\frac{1}{\epsilon}\mathbb{E}\int_{0}^{T}\int_{\mathbb{T}^{d}}|(u_{n,\epsilon}-S)^{+}|^{2}|u_{n,\epsilon}|^{m-1}dxds\label{eq:priori estimate2} \\
 & \leq C\left(1+\mathbb{E}\left\Vert \xi_{n}\right\Vert _{L_{m+1}(\mathbb{T}^{d})}^{m+1}\right).\nonumber
\end{align}
\end{thm}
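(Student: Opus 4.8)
The plan is to derive both a priori bounds by It\^o's formula applied to the functionals $t\mapsto\|u_{n,\epsilon}(t)\|_{L_2(\mathbb{T}^d)}^2$ and $t\mapsto\|u_{n,\epsilon}(t)\|_{L_{m+1}(\mathbb{T}^d)}^{m+1}$, in each case isolating a ``good'' nonpositive term coming from the Laplacian and from the penalty, controlling the remaining drift and martingale terms, raising to the appropriate power, and closing a Gr\"onwall estimate after a Burkholder--Davis--Gundy bound. All constants come out independent of $n$ and $\epsilon$ since the only structural inputs are $|F(t,x,r)|\le N_0|r|$ (Assumption \ref{assu:inhomogeneous function}, using $F(t,x,0)=0$), the linear growth $|\sigma(r)|_{l_2}\le K(1+|r|)$ (Assumption \ref{assu:assumption for sigma}), $S\ge0$ (Assumption \ref{assu:Assuption for barrier}), and monotonicity of $\Phi_n$; the particular shape of $\Phi_n$ is never used.

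\textbf{Estimate \eqref{eq:priori estimate1}.} Pairing the equation with $u_{n,\epsilon}$, the diffusion term yields, after integration by parts, $\int_{\mathbb{T}^d}\Delta\Phi_n(u_{n,\epsilon})u_{n,\epsilon}\,dx=-\int_{\mathbb{T}^d}\Phi_n'(u_{n,\epsilon})|\nabla u_{n,\epsilon}|^2\,dx=-\|\nabla\llbracket\zeta_n\rrbracket(u_{n,\epsilon})\|_{L_2(\mathbb{T}^d)}^2$, since $\zeta_n^2=\Phi_n'$; writing $u_{n,\epsilon}=(u_{n,\epsilon}-S)+S$ and using $S\ge0$ gives $\langle (u_{n,\epsilon}-S)^+,u_{n,\epsilon}\rangle\ge\|(u_{n,\epsilon}-S)^+\|_{L_2(\mathbb{T}^d)}^2$; the rest of the drift together with the It\^o correction is bounded by $2N_0\|u_{n,\epsilon}\|_{L_2}^2+\sum_k\|\sigma^k(u_{n,\epsilon})\|_{L_2}^2\le C(1+\|u_{n,\epsilon}\|_{L_2(\mathbb{T}^d)}^2)$. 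Writing $X_t=\|u_{n,\epsilon}(t)\|_{L_2}^2$, $Y_t=\int_0^t\|\nabla\llbracket\zeta_n\rrbracket(u_{n,\epsilon})\|_{L_2}^2\,ds$ and $Z_t=\tfrac1\epsilon\int_0^t\|(u_{n,\epsilon}-S)^+\|_{L_2}^2\,ds$, this gives
\[
X_t+2Y_t+2Z_t\le X_0+C\int_0^t(1+X_s)\,ds+M_t,\qquad M_t:=2\sum_k\int_0^t\langle\sigma^k(u_{n,\epsilon}),u_{n,\epsilon}\rangle\,dW_s^k .
\]
Raising to the power $p/2\ge1$, taking $\sup_{t\le T}$ and $\mathbb{E}$, estimating $\langle M\rangle_T\le C\big(\sup_{s\le T}(1+X_s)\big)\int_0^T(1+X_s)\,ds$ in the BDG inequality and separating the product by Young's inequality so that a small multiple of $\mathbb{E}\sup_{s\le T}X_s^{p/2}$ is absorbed by the left side, one obtains a Gr\"onwall inequality for $g(t):=\mathbb{E}\sup_{r\le t}(X_r+2Y_r+2Z_r)^{p/2}$. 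Since $\|\xi_n\|_{L_\infty(\mathbb{T}^d)}\le n$ one has $\mathbb{E}X_0^{p/2}<\infty$, and Gr\"onwall gives $g(T)\le C(1+\mathbb{E}\|\xi_n\|_{L_2}^p)$; as the left side of \eqref{eq:priori estimate1} is dominated by $3\,g(T)$, this is exactly \eqref{eq:priori estimate1}.

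\textbf{Estimate \eqref{eq:priori estimate2}.} Apply It\^o's formula to $\psi(u_{n,\epsilon})$ with $\psi(r)=|r|^{m+1}$, so $\psi'(r)=(m+1)|r|^{m-1}r$ and $\psi''(r)=m(m+1)|r|^{m-1}$. The diffusion term gives $-m(m+1)\int|u_{n,\epsilon}|^{m-1}\zeta_n^2(u_{n,\epsilon})|\nabla u_{n,\epsilon}|^2\,dx\le0$; since on $\{u_{n,\epsilon}>S\ge0\}$ one has $u_{n,\epsilon}\ge(u_{n,\epsilon}-S)^+>0$, the penalty contributes $-\tfrac{m+1}{\epsilon}\int|u_{n,\epsilon}|^{m-1}u_{n,\epsilon}(u_{n,\epsilon}-S)^+\,dx\le-\tfrac{m+1}{\epsilon}\int|u_{n,\epsilon}|^{m-1}|(u_{n,\epsilon}-S)^+|^2\,dx$; the $F$-term is $\le (m+1)N_0\int|u_{n,\epsilon}|^{m+1}\,dx$; and $\tfrac12\int\psi''(u_{n,\epsilon})|\sigma(u_{n,\epsilon})|_{l_2}^2\,dx\le C\int(|u_{n,\epsilon}|^{m-1}+|u_{n,\epsilon}|^{m+1})\,dx\le C(1+\|u_{n,\epsilon}\|_{L_{m+1}}^{m+1})$. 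With $V_t=\|u_{n,\epsilon}(t)\|_{L_{m+1}}^{m+1}$ and $W_t=\tfrac1\epsilon\int_0^t\int_{\mathbb{T}^d}|u_{n,\epsilon}|^{m-1}|(u_{n,\epsilon}-S)^+|^2\,dx\,ds$ this reads $V_t+(m+1)W_t\le V_0+C\int_0^t(1+V_s)\,ds+N_t$, where the martingale $N_t$ satisfies, by Minkowski's integral inequality, $\sum_k\big(\int\psi'(u_{n,\epsilon})\sigma^k(u_{n,\epsilon})\,dx\big)^2\le\big(\int|\psi'(u_{n,\epsilon})||\sigma(u_{n,\epsilon})|_{l_2}\,dx\big)^2\le C(1+V_t)^2$, hence $\langle N\rangle_T\le C\int_0^T(1+V_s)^2\,ds$. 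Taking $\sup_{t\le T}$, $\mathbb{E}$, BDG and Young as before, and using $\mathbb{E}\|\xi_n\|_{L_{m+1}}^{m+1}\le\mathbb{E}\|\xi\|_{L_{m+1}}^{m+1}<\infty$, Gr\"onwall yields \eqref{eq:priori estimate2}.

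\textbf{Main obstacle.} The delicate point is making rigorous the It\^o formula for the $L_{m+1}$-functional $\psi(u_{n,\epsilon})$ (and, less severely, for $\|u_{n,\epsilon}\|_{L_2}^2$): the definition of an $L_2$-solution only furnishes $u_{n,\epsilon}\in L_2(\Omega_T;H^1(\mathbb{T}^d))$ with $\nabla\Phi_n(u_{n,\epsilon})\in L_2(\Omega_T;L_2(\mathbb{T}^d))$ and continuity in $L_2(\mathbb{T}^d)$, which does not a priori give pointwise-in-time membership in, or continuity in, $L_{m+1}(\mathbb{T}^d)$. I would handle this by mollifying the equation in space with $\rho_\theta^{\otimes d}$, applying the finite-dimensional It\^o formula to $\psi$ of the resulting smooth $L_2(\mathbb{T}^d)$-valued semimartingale, and then letting $\theta\to0^+$, exploiting that $\Phi_n$ has bounded derivatives (Proposition \ref{prop:Assumptio for coef}) together with standard commutator estimates for mollified products to pass to the limit in each term. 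A further routine precaution is to run both computations up to the stopping times $\tau_R:=\inf\{t:\|u_{n,\epsilon}(t)\|_{L_2(\mathbb{T}^d)}\ge R\}$, so that all expectations are a priori finite before Gr\"onwall is invoked, and then to let $R\to\infty$ by monotone convergence, which is legitimate because the resulting bounds do not depend on $R$.
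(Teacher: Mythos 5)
Your proposal is correct and follows essentially the same route as the paper: It\^o's formula applied to $\left\Vert u_{n,\epsilon}\right\Vert _{L_{2}}^{2}$ and $\left\Vert u_{n,\epsilon}\right\Vert _{L_{m+1}}^{m+1}$, sign analysis of the Laplacian and penalty terms using $S\geq0$ and monotonicity of $\Phi_{n}$, then BDG, Young and Gr\"onwall. The only difference is cosmetic: where you sketch a mollification-and-stopping-time justification of the It\^o formula for these functionals, the paper simply invokes the known It\^o formula of \cite[Lemma 2]{dareiotis2015boundedness}.
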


\begin{proof}
Applying It\^{o}'s formula (cf. \cite[Lemma 2]{dareiotis2015boundedness}),
we have
\begin{align*}
\left\Vert u_{n,\epsilon}(t)\right\Vert _{L_{2}(\mathbb{T}^{d})}^{2} & =\left\Vert \xi_{n}\right\Vert _{L_{2}(\mathbb{T}^{d})}^{2}-2\int_{0}^{t}\langle\partial_{x_{i}}\Phi_{n}(u_{n,\epsilon}),\partial_{x_{i}}u_{n,\epsilon}\rangle_{L_{2}(\mathbb{T}^{d})}ds\\
 & \quad+2\int_{0}^{t}\langle F(s,x,u_{n,\epsilon})-\frac{1}{\epsilon}(u_{n,\epsilon}-S)^{+},u_{n,\epsilon}\rangle_{L_{2}(\mathbb{T}^{d})}ds\\
 & \quad+2\sum_{k=1}^{\infty}\int_{0}^{t}\langle\sigma^{k}(u_{n,\epsilon}),u_{n,\epsilon}\rangle_{L_{2}(\mathbb{T}^{d})}dW_{s}^{k}\\
 & \quad+\int_{0}^{t}\sum_{k=1}^{\infty}\left\Vert \sigma^{k}(u_{n,\epsilon})\right\Vert _{L_{2}(\mathbb{T}^{d})}^{2}ds,\quad\textrm{a.e.}\ t\in[0,T].
\end{align*}
In view of the definition of $\zeta_{n}$ and Assumptions \ref{assu:assumption for sigma}
and \ref{assu:inhomogeneous function}, we have
\begin{align*}
\left\Vert u_{n,\epsilon}(t)\right\Vert _{L_{2}(\mathbb{T}^{d})}^{2} & \leq C+\left\Vert \xi_{n}\right\Vert _{L_{2}(\mathbb{T}^{d})}^{2}+C\int_{0}^{t}\left\Vert u_{n,\epsilon}\right\Vert _{L_{2}(\mathbb{T}^{d})}^{2}ds\\
 & \quad-2\int_{0}^{t}\left\Vert \nabla\left\llbracket \zeta_{n}\right\rrbracket (u_{n,\epsilon})\right\Vert _{L_{2}(\mathbb{T}^{d})}^{2}+\frac{1}{\epsilon}\langle(u_{n,\epsilon}-S)^{+},u_{n,\epsilon}\rangle_{L_{2}(\mathbb{T}^{d})}ds\\
 & \quad+2\sum_{k=1}^{\infty}\int_{0}^{t}\langle\sigma^{k}(u_{n,\epsilon}),u_{n,\epsilon}\rangle_{L_{2}(\mathbb{T}^{d})}dW_{s}^{k}.
\end{align*}
Since the barrier $S$ is non-negative, we have
\[
-\frac{2}{\epsilon}\int_{0}^{t}\int_{\mathbb{T}^d}u_{n,\epsilon}(u_{n,\epsilon}-S)^{+}dxds\leq-\frac{2}{\epsilon}\int_{0}^{t}\left\Vert (u_{n,\epsilon}-S)^{+}\right\Vert _{L_{2}(\mathbb{T}^{d})}^{2}ds.
\]
Raising to the power $p/2$, taking suprema up to time $t'$ and expectations,
gives
\begin{align}
 & \mathbb{E}\sup_{t\leq t'}\left\Vert u_{n,\epsilon}(t)\right\Vert _{L_{2}(\mathbb{T}^{d})}^{p}+\mathbb{E}\left(\int_{0}^{t'}\left\Vert \nabla\left\llbracket \zeta_{n}\right\rrbracket (u_{n,\epsilon})\right\Vert _{L_{2}(\mathbb{T}^{d})}^{2}ds\right)^{\frac{p}{2}}\nonumber \\
 & \quad+\mathbb{E}\left(\frac{1}{\epsilon}\int_{0}^{t'}\left\Vert (u_{n,\epsilon}-S)^{+}\right\Vert _{L_{2}(\mathbb{T}^{d})}^{2}ds\right)^{\frac{p}{2}}\nonumber \\
 & \leq C\Bigg[1+\mathbb{E}\left\Vert \xi_{n}\right\Vert _{L_{2}(\mathbb{T}^{d})}^{p}+\int_{0}^{t'}\mathbb{E}\sup_{t\leq s}\left\Vert u_{n,\epsilon}(t)\right\Vert _{L_{2}(\mathbb{T}^{d})}^{p}ds\nonumber\\
 & \quad+\mathbb{E}\sup_{t\leq t'}\left|\sum_{k=1}^{\infty}\int_{0}^{t}\langle\sigma^{k}(u_{n,\epsilon}),u_{n,\epsilon}\rangle_{L_{2}(\mathbb{T}^{d})}dW_{s}^{k}\right|^{\frac{p}{2}}\Bigg].\nonumber 
\end{align}
Since
\begin{align*}
 & \mathbb{E}\sup_{t\leq t'}\left|\sum_{k=1}^{\infty}\int_{0}^{t}\langle\sigma^{k}(u_{n,\epsilon}),u_{n,\epsilon}\rangle_{L_{2}(\mathbb{T}^{d})}dW_{s}^{k}\right|^{\frac{p}{2}}\\
 & \leq C\mathbb{E}\left|\int_{0}^{t'}\left(\int_{\mathbb{T}^{d}}\sum_{k=1}^{\infty}|\sigma^{k}(u_{n,\epsilon})|^{2}dx\right)\left(\int_{\mathbb{T}^{d}}|u_{n,\epsilon}|^{2}dx\right)ds\right|^{\frac{p}{4}}\\
 & \leq C\mathbb{E}\left|1+\int_{0}^{t'}\left(\int_{\mathbb{T}^{d}}|u_{n,\epsilon}|^{2}dx\right)^{2}ds\right|^{\frac{p}{4}}\\
 & \leq C+\bar{\varepsilon}C\mathbb{E}\sup_{t\leq t'}\left\Vert u_{n,\epsilon}(t)\right\Vert _{L_{2}(\mathbb{T}^{d})}^{p}+\frac{C}{\bar{\varepsilon}}\int_{0}^{t'}\mathbb{E}\sup_{t\leq s}\left\Vert u_{n,\epsilon}(t)\right\Vert _{L_{2}(\mathbb{T}^{d})}^{p}ds
\end{align*}
for sufficiently small $\bar{\varepsilon}>0$, applying Gr\"onwall's
inequality, we have the first desired estimate (\ref{eq:priori estimate1}).

We now prove inequality (\ref{eq:priori estimate2}). Using It\^{o}'s
formula (cf. \cite[Lemma 2]{dareiotis2015boundedness}), we have 
\begin{align*}
 & \left\Vert u_{n,\epsilon}(t)\right\Vert _{L_{m+1}(\mathbb{T}^{d})}^{m+1}\\
 & =\left\Vert \xi_{n}\right\Vert _{L_{m+1}(\mathbb{T}^{d})}^{m+1}-(m^{2}+m)\int_{0}^{t}\int_{\mathbb{T}^{d}}\partial_{x_{i}}\Phi_{n}(u_{n,\epsilon})\left|u_{n,\epsilon}\right|^{m-1}\partial_{x_{i}}u_{n,\epsilon}dxds\\
 & \quad+(m+1)\int_{0}^{t}\int_{\mathbb{T}^{d}}\left[F(s,x,u_{n,\epsilon})-\frac{1}{\epsilon}(u_{n,\epsilon}-S)^{+}\right]\cdot u_{n,\epsilon}\left|u_{n,\epsilon}\right|^{m-1}dxds\\
 & \quad+(m^{2}+m)\sum_{k=1}^{\infty}\int_{0}^{t}\int_{\mathbb{T}^{d}}\sigma^{k}(u_{n,\epsilon})u_{n,\epsilon}\cdot\left|u_{n,\epsilon}\right|^{m-1}dxdW_{s}^{k}\\
 & \quad+\frac{(m^{2}+m)}{2}\int_{0}^{t}\sum_{k=1}^{\infty}\int_{\mathbb{T}^{d}}\big|\sigma^{k}(u_{n,\epsilon})\big|^{2}\left|u_{n,\epsilon}\right|^{m-1}dxds.
\end{align*}
Since the obtacle $S$ is non-negative and $\Phi_{n}$ is monotone,
in view of Assumptions \ref{assu:assumption for sigma} and \ref{assu:inhomogeneous function},
we have
\begin{align*}
 & \left\Vert u_{n,\epsilon}(t)\right\Vert _{L_{m+1}(\mathbb{T}^{d})}^{m+1}+\frac{m+1}{\epsilon}\int_{0}^{t}\int_{\mathbb{T}^{d}}|(u_{n,\epsilon}-S)^{+}|^{2}\left|u_{n,\epsilon}\right|^{m-1}dxds \\
 & \leq C+\left\Vert \xi_{n}\right\Vert _{L_{m+1}(\mathbb{T}^{d})}^{m+1}+C\int_{0}^{t}\int_{\mathbb{T}^{d}}\left|u_{n,\epsilon}\right|^{m+1}dxds\\
 & \quad+(m^{2}+m)\sum_{k=1}^{\infty}\int_{0}^{t}\int_{\mathbb{T}^{d}}\sigma^{k}(u_{n,\epsilon})u_{n,\epsilon}\cdot\left|u_{n,\epsilon}\right|^{m-1}dxdW_{s}^{k}. 
\end{align*}
Since
\begin{align*}
 & (m^{2}+m)\mathbb{E}\sup_{t\leq t'}\left|\sum_{k=1}^{\infty}\int_{0}^{t}\int_{\mathbb{T}^{d}}\sigma^{k}(u_{n,\epsilon})u_{n,\epsilon}\cdot\left|u_{n,\epsilon}\right|^{m-1}dxdW_{s}^{k}\right|\\
 & \leq C\mathbb{E}\left[\left(\int_{0}^{t'}\sum_{k=1}^{\infty}\left(\int_{\mathbb{T}^{d}}\sigma^{k}(u_{n,\epsilon})u_{n,\epsilon}\cdot\left|u_{n,\epsilon}\right|^{m-1}dx\right)^{2}ds\right)^{\frac{1}{2}}\right]\\
 & \leq C+C\mathbb{E}\left[\left(\int_{0}^{t'}\left\Vert u_{n,\epsilon}(s)\right\Vert _{L_{m+1}(\mathbb{T}^{d})}^{2(m+1)}ds\right)^{\frac{1}{2}}\right]\\
 & \leq C+\frac{1}{2}\mathbb{E}\sup_{t\leq t'}\left\Vert u_{n,\epsilon}(t)\right\Vert _{L_{m+1}(\mathbb{T}^{d})}^{m+1}+C\mathbb{E}\left[\int_{0}^{t'}\left\Vert u_{n,\epsilon}(s)\right\Vert _{L_{m+1}(\mathbb{T}^{d})}^{m+1}ds\right],
\end{align*}
using Gr\"onwall's inequality, we obtain the second desired inequality
(\ref{eq:priori estimate2}).
\end{proof}
\begin{rem}
\label{rem:L2equalentropy}In view of Definition \ref{def:L_2 solution},
Theorem \ref{thm:esimate for u_n,epsilon} and the smoothness of $\zeta_{n}$,
with It\^{o}'s formula, the $L_{2}$-solution $u_{n,\epsilon}$ is also
an entropy solution of $\Pi(\Phi_{n},F-G_{\epsilon}(\cdot,S),\xi_{n})$
in the sense of Definition \ref{def:entropy solution without}.
\end{rem}

\begin{lem}
\label{Lem:nonnegative of u_n,e}Let Assumptions \ref{assu:assumption for phi}-\ref{assu:Assuption for barrier}
hold. Then, for all $n\in\mathbb{N}$ and $\epsilon>0$, if $u_{n,\epsilon}$
is an $L_{2}$-solution of $\Pi(\Phi_{n},F-G_{\epsilon}(\cdot,S),\xi_{n})$,
we have $u_{n,\epsilon}\geq0$ almost everywhere in $Q_{T}$, almost
surely.
\end{lem}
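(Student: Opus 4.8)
The plan is to repeat the argument of Proposition~\ref{prop:nonnegative}, but to exploit the fact that $u_{n,\epsilon}$ is an $L_2$-solution and hence enjoys enough regularity to apply It\^o's formula directly, instead of the entropy inequality. For small $\delta>0$ take the same convex function $\eta_\delta\in C^2(\mathbb R)$ as in Proposition~\ref{prop:nonnegative}, i.e.\ $\eta_\delta(0)=\eta_\delta'(0)=0$ and $\eta_\delta''=\rho_\delta$; then $\eta_\delta$ vanishes on $(-\infty,0]$, $0\le\eta_\delta'\le1$ with $\eta_\delta'$ supported in $[0,\delta]$, and $|\eta_\delta(r)-r^+|\le\delta$, $|\eta_\delta'(r)\,r-r^+|\le\delta$ for all $r\in\mathbb R$. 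Since $u_{n,\epsilon}\in L_2(\Omega_T;H^1(\mathbb T^d))$ and $\nabla\Phi_n(u_{n,\epsilon})\in L_2(\Omega_T;L_2(\mathbb T^d))$ by Definition~\ref{def:L_2 solution}, and $r\mapsto\eta_\delta(-r)$ is $C^2$ with bounded second derivative, I would apply the It\^o formula of \cite[Lemma~2]{dareiotis2015boundedness} to $t\mapsto\int_{\mathbb T^d}\eta_\delta(-u_{n,\epsilon}(t))\,dx$.

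Then I would treat the resulting terms one by one. The initial term $\int_{\mathbb T^d}\eta_\delta(-\xi_n)\,dx$ is zero because $\xi_n\ge0$ and $\eta_\delta\equiv0$ on $(-\infty,0]$. The second-order term from $\Delta\Phi_n(u_{n,\epsilon})$, after integrating by parts on the torus, equals $-\int_{\mathbb T^d}\Phi_n'(u_{n,\epsilon})\,\eta_\delta''(-u_{n,\epsilon})\,|\nabla u_{n,\epsilon}|^2\,dx\le0$ since $\Phi_n$ is increasing and $\eta_\delta''=\rho_\delta\ge0$, so it is simply discarded. The penalty contribution is $\int_{\mathbb T^d}\eta_\delta'(-u_{n,\epsilon})\,G_\epsilon(u_{n,\epsilon},S)\,dx$, and it vanishes identically: $\eta_\delta'(-u_{n,\epsilon})$ is supported on $\{u_{n,\epsilon}<0\}$, where $S\ge0$ forces $(u_{n,\epsilon}-S)^+=0$ and hence $G_\epsilon(u_{n,\epsilon},S)=0$ — the exact analogue of the way the Skohorod condition annihilates the $\nu$-term in Proposition~\ref{prop:nonnegative}. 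For the $F$-term I use $F(t,x,0)=0$ and Assumption~\ref{assu:inhomogeneous function} to get $|F(t,x,u_{n,\epsilon})|\le N_0|u_{n,\epsilon}|$, so that $|{-\eta_\delta'(-u_{n,\epsilon})F(t,x,u_{n,\epsilon})}|\le N_0\,\eta_\delta'(-u_{n,\epsilon})|u_{n,\epsilon}|\le N_0\big((-u_{n,\epsilon})^++\delta\big)$ pointwise. For the It\^o correction $\tfrac12\int_{\mathbb T^d}\rho_\delta(-u_{n,\epsilon})\,|\sigma(u_{n,\epsilon})|_{l_2}^2\,dx$ I note that $\rho_\delta(-u_{n,\epsilon})$ is supported on $\{-\delta<u_{n,\epsilon}<0\}$, where Assumption~\ref{assu:assumption for sigma} and $\sigma(0)=\mathbf0$ give $|\sigma(u_{n,\epsilon})|_{l_2}^2\le C\delta^{1+2\kappa}$, while $\int_{\mathbb T^d}\rho_\delta(-u_{n,\epsilon})\,dx\le C/\delta$, so this term is $O(\delta^{2\kappa})$. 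Finally the stochastic integral is a genuine martingale by the estimates of Theorem~\ref{thm:esimate for u_n,epsilon} and $0\le\eta_\delta'\le1$, so taking expectations kills it.

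Collecting these bounds and using $|\eta_\delta(-u_{n,\epsilon}(t))-(-u_{n,\epsilon}(t))^+|\le\delta$ to return from $\eta_\delta(-u_{n,\epsilon})$ to the positive part, I obtain, writing $g(t):=\mathbb E\int_{\mathbb T^d}(-u_{n,\epsilon}(t))^+\,dx$,
\[
g(t)\le N_0\int_0^t g(s)\,ds+C\delta^{2\kappa}
\]
for every sufficiently small $\delta>0$ and a constant $C$ independent of $\delta$ (the $O(\delta)$ from the $F$-term being absorbed since $2\kappa\le1$). Letting $\delta\to0^+$ yields $g(t)\le N_0\int_0^t g(s)\,ds$; since $u_{n,\epsilon}$ is a continuous $L_2(\mathbb T^d)$-valued process, $g$ is continuous, and $g(0)=\mathbb E\|(\xi_n)^-\|_{L_1}=0$, so Gr\"onwall's inequality forces $g\equiv0$. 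Hence $(-u_{n,\epsilon}(t))^+=0$ for a.e.\ $x$, almost surely, for every $t\in[0,T]$, i.e.\ $u_{n,\epsilon}\ge0$ almost everywhere in $Q_T$, almost surely.

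I do not expect a genuine obstacle here — the argument is routine once Theorem~\ref{thm:esimate for u_n,epsilon} and Definition~\ref{def:L_2 solution} are in place. The two points needing a little care are the justification of It\^o's formula for the composition $\eta_\delta(-\cdot)\circ u_{n,\epsilon}$ together with the integration-by-parts identity for the $\Delta\Phi_n$-term (both resting on the $H^1$-type regularity built into the notion of $L_2$-solution), and the observation that the penalty term vanishes because $\eta_\delta'(-u_{n,\epsilon})$ lives on the set $\{u_{n,\epsilon}<0\}$, on which $S\ge0$ already makes $(u_{n,\epsilon}-S)^+=0$.
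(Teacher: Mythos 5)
Your argument is correct, and its core coincides with the paper's: the same function $\eta_{\delta}(-\cdot)$, the same key observation that the penalty term dies because $\eta_{\delta}^{\prime}(-u_{n,\epsilon})$ lives on $\{u_{n,\epsilon}\leq 0\}$ where $S\geq 0$ forces $(u_{n,\epsilon}-S)^{+}=0$, the same $O(\delta^{2\kappa})$ bound for the noise correction via Assumption \ref{assu:assumption for sigma}, and Gr\"onwall at the end. The only real difference is the route to the inequality: the paper invokes Remark \ref{rem:L2equalentropy} to view $u_{n,\epsilon}$ as an entropy solution and then plugs $\eta_{\delta}(-\cdot)$ and an $x$-independent $\phi$ into \eqref{eq:entropy formula withou}, finishing ``as in the proof of Proposition \ref{prop:nonnegative}'' (which requires the limiting argument with time test functions behind \eqref{eq:ineq_givenphis}); you instead apply It\^{o}'s formula directly to $\int_{\mathbb{T}^{d}}\eta_{\delta}(-u_{n,\epsilon}(t))\,dx$, which lets you discard the nonpositive second-order term outright and run Gr\"onwall pointwise in $t$ using the continuity of $t\mapsto u_{n,\epsilon}(t)$ in $L_{2}(\mathbb{T}^{d})$ — a slightly more self-contained variant, at the price of justifying It\^{o}'s formula for the composition, which is exactly the content of Remark \ref{rem:L2equalentropy} (note that $\eta_{\delta}(-\cdot)\in\mathcal{E}$, so this is already covered). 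One small slip to fix: $\eta_{\delta}^{\prime}$ is \emph{not} supported in $[0,\delta]$ (it equals $1$ on $[\delta,\infty)$); it is $\eta_{\delta}^{\prime\prime}=\rho_{\delta}$ that is supported in $(0,\delta)$. This is harmless, since all you use is $0\leq\eta_{\delta}^{\prime}\leq 1$, $\eta_{\delta}^{\prime}(-r)=0$ for $r\geq 0$, and the support property of $\eta_{\delta}^{\prime\prime}$ in the $\sigma$-term estimate.
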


\begin{proof}
For sufficiently small $\delta>0$, we introduce a function $\eta_{\delta}\in C^{2}(\mathbb{R})$
defined by
\[
\eta_{\delta}(0)=\eta_{\delta}^{\prime}(0)=0,\quad\eta_{\delta}^{\prime\prime}(r)=\rho_{\delta}(r).
\]
Based on Remark \ref{rem:L2equalentropy}, applying entropy formulation
(\ref{eq:entropy formula withou}) with $\eta(\cdot)=\eta_{\delta}(-\cdot)$
and $\phi$ which is independent of $x$, we have
\begin{align*}
 & -\mathbb{E}\int_{0}^{T}\int_{\mathbb{T}^{d}}\eta_{\delta}(-u_{n,\epsilon})\partial_{t}\phi dxdt\\
 & \leq\mathbb{E}\int_{0}^{T}\int_{\mathbb{T}^{d}}\eta_{\delta}^{\prime}(-u_{n,\epsilon})\bigg[\frac{1}{\epsilon}(u_{n,\epsilon}-S)^{+}-F(t,x,u_{n,\epsilon})\bigg]\phi dxdt\\
 & \quad+\mathbb{E}\int_{0}^{T}\int_{\mathbb{T}^{d}}\frac{1}{2}\eta_{\delta}^{\prime\prime}(-u_{n,\epsilon})\sum_{k=1}^{\infty}|\sigma^{k}(u_{n,\epsilon})|^{2}\phi-\eta_{\delta}^{\prime\prime}(-u_{n,\epsilon})|\nabla\left\llbracket \zeta_{n}\right\rrbracket (u_{n,\epsilon})|^{2}\phi dxdt.
\end{align*}
Since $\mathrm{supp}\ \eta_{\delta}^{\prime}(-\cdot)\subset(\infty,0]$
and the barrier $S$ is non-negative, we have
\[
\eta_{\delta}^{\prime}(-u_{n,\epsilon}(t,x))\cdot(u_{n,\epsilon}(t,x)-S(t))^{+}=0,\quad\textrm{a.s.}\ (\omega,t,x)\in\Omega\times Q_{T}.
\]
Therefore, proceeding as in the proof of Proposition \ref{prop:nonnegative},
we have
\[
\mathbb{E}\int_{\mathbb{T}^{d}}(-u_{n,\epsilon}(t,x))^{+}dx\leq0\quad\textrm{a.e.}\ t\in[0,T].
\]
Then, the proof is complete.
\end{proof}
\begin{thm}
Let Assumptions \ref{assu:assumption for phi}-\ref{assu:Assuption for barrier}
hold. Then, for all $n\in\mathbb{N}$ and $\epsilon>0$, we have
\begin{equation}
\mathbb{E}\left\Vert \nabla\Phi_{n}(u_{n,\epsilon})\right\Vert _{L_{2}(Q_{T})}^{2}\leq C\left(1+\mathbb{E}\left\Vert \xi_{n}\right\Vert _{L_{m+1}(\mathbb{T}^{d})}^{m+1}\right)\label{prioriestimateforphi}
\end{equation}
for a constant $C$ independent of $n$ and $\epsilon$.
\end{thm}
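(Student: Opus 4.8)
The plan is to apply It\^o's formula to the functional $\int_{\mathbb{T}^d}\int_0^{u_{n,\epsilon}}\Phi_n(r)\,dr\,dx$, which is the natural quantity whose dissipation term produces $\|\nabla\Phi_n(u_{n,\epsilon})\|_{L_2}^2$. Indeed, writing $\Psi_n(u):=\int_0^u\Phi_n(r)\,dr$, one has $\Psi_n'=\Phi_n$, so the chain rule gives $\nabla\Psi_n(u_{n,\epsilon})=\Phi_n'(u_{n,\epsilon})\nabla u_{n,\epsilon}=\nabla\Phi_n(u_{n,\epsilon})$, and the Laplacian term $\langle\Delta\Phi_n(u_{n,\epsilon}),\Phi_n(u_{n,\epsilon})\rangle=-\|\nabla\Phi_n(u_{n,\epsilon})\|_{L_2}^2$ appears with the right sign. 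The $L_2$-solution $u_{n,\epsilon}$ has the regularity ($u_{n,\epsilon}\in L_2(\Omega_T;H^1)$, $\nabla\Phi_n(u_{n,\epsilon})\in L_2(\Omega_T;L_2)$) needed to justify this via the It\^o formula of \cite[Lemma 2]{dareiotis2015boundedness}.

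The key steps, in order: (1) Apply It\^o's formula to obtain
\begin{align*}
\int_{\mathbb{T}^d}\Psi_n(u_{n,\epsilon}(t))\,dx
&=\int_{\mathbb{T}^d}\Psi_n(\xi_n)\,dx-\int_0^t\|\nabla\Phi_n(u_{n,\epsilon})\|_{L_2(\mathbb{T}^d)}^2\,ds\\
&\quad+\int_0^t\int_{\mathbb{T}^d}\big[F(s,x,u_{n,\epsilon})-G_\epsilon(u_{n,\epsilon},S)\big]\Phi_n(u_{n,\epsilon})\,dx\,ds\\
&\quad+\sum_{k=1}^\infty\int_0^t\int_{\mathbb{T}^d}\sigma^k(u_{n,\epsilon})\Phi_n(u_{n,\epsilon})\,dx\,dW_s^k\\
&\quad+\tfrac12\int_0^t\sum_{k=1}^\infty\int_{\mathbb{T}^d}|\sigma^k(u_{n,\epsilon})|^2\Phi_n'(u_{n,\epsilon})\,dx\,ds.
\end{align*}
(2) Rearrange, take expectations (the stochastic integral is a martingale once one checks integrability using (\ref{eq:priori estimate1})–(\ref{eq:priori estimate2})), and drop the non-negative penalty contribution $-\mathbb{E}\int_0^T\int_{\mathbb{T}^d}G_\epsilon(u_{n,\epsilon},S)\Phi_n(u_{n,\epsilon})\,dx\,ds$, which has the correct sign since $u_{n,\epsilon}\geq0$ (Lemma \ref{Lem:nonnegative of u_n,e}) forces $\Phi_n(u_{n,\epsilon})\geq0$, while $G_\epsilon\geq0$. (3) Bound the remaining terms. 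Since $\Phi_n$ satisfies Assumption \ref{assu:assumption for phi} with constant $3K$, one has $|\Phi_n(r)|\lesssim|r|^m+1$ and $\Phi_n'(r)=\zeta_n(r)^2\lesssim|r|^{m-1}+1$, hence $0\le\Psi_n(r)\lesssim|r|^{m+1}+1$; thus $\mathbb{E}\int_{\mathbb{T}^d}\Psi_n(\xi_n)\,dx\lesssim 1+\mathbb{E}\|\xi_n\|_{L_{m+1}}^{m+1}$, the $F$-term is controlled by $\mathbb{E}\|u_{n,\epsilon}\|_{L_{m+1}}^{m+1}$ (using $F(s,x,0)=0$ and Lipschitzness), and the It\^o-correction term by $\mathbb{E}\int|\sigma^k|^2|u_{n,\epsilon}|^{m-1}\lesssim 1+\mathbb{E}\|u_{n,\epsilon}\|_{L_{m+1}}^{m+1}$ via Assumption \ref{assu:assumption for sigma}. (4) Apply estimate (\ref{eq:priori estimate2}) of Theorem \ref{thm:esimate for u_n,epsilon} to absorb all $\mathbb{E}\|u_{n,\epsilon}\|_{L_{m+1}}^{m+1}$ terms into $C(1+\mathbb{E}\|\xi_n\|_{L_{m+1}}^{m+1})$, yielding (\ref{prioriestimateforphi}).

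The main obstacle is the rigorous justification of It\^o's formula for $\int_{\mathbb{T}^d}\Psi_n(u_{n,\epsilon})\,dx$: one must verify that the $L_2$-solution has enough regularity for the cited It\^o formula to apply to this particular (non-quadratic, porous-media-type) functional, and in particular that the dissipation identity $\langle\Delta\Phi_n(u_{n,\epsilon}),\Phi_n(u_{n,\epsilon})\rangle_{L_2}=-\|\nabla\Phi_n(u_{n,\epsilon})\|_{L_2}^2$ holds in the weak formulation — this is where $\nabla\Phi_n(u_{n,\epsilon})\in L_2(\Omega_T;L_2)$ from Definition \ref{def:L_2 solution} is essential. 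A secondary point worth care is the sign of the penalty term: it is crucial that $u_{n,\epsilon}\geq 0$ and $\Phi_n$ odd and increasing give $\Phi_n(u_{n,\epsilon})\geq 0$, so that $G_\epsilon(u_{n,\epsilon},S)\Phi_n(u_{n,\epsilon})\geq 0$ and can simply be discarded; the constant $C$ then depends only on $K$, $N_0$, $d$, $T$, $m$ and not on $n$ or $\epsilon$.
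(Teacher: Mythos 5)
Your proposal is correct and follows essentially the same route as the paper: Itô's formula applied to $\int_{\mathbb{T}^d}\int_0^{u_{n,\epsilon}}\Phi_n(r)\,dr\,dx$, discarding the penalty term via $u_{n,\epsilon}\geq 0$ and $\Phi_n(u_{n,\epsilon})\geq 0$, bounding the remaining terms through $\Phi_n'(r)\leq C(1+|r|^{m-1})$ and the growth of $F$ and $\sigma$, and concluding with estimate (\ref{eq:priori estimate2}). The only cosmetic difference is in the treatment of the stochastic integral (you invoke the martingale property after checking integrability, while the paper bounds it via the Burkholder--Davis--Gundy and H\"older inequalities), which does not affect the validity of the argument.
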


\begin{proof}
Applying It\^{o}'s formula (cf. \cite{krylov2013relatively}), we have
\begin{align}
 & \int_{\mathbb{T}^{d}}\int_{0}^{u_{n,\epsilon}(t)}\Phi_{n}(r)drdx\nonumber\\
 & =\int_{\mathbb{T}^{d}}\int_{0}^{\xi_{n}}\Phi_{n}(r)drdx-\int_{0}^{t}\int_{\mathbb{T}^{d}}\partial_{x_{i}}\Phi_{n}(u_{n,\epsilon})\partial_{x_{i}}\Phi_{n}(u_{n,\epsilon})dxds\nonumber \\
 & \quad+\int_{0}^{t}\int_{\mathbb{T}^{d}}\left(F(s,x,u_{n,\epsilon})-\frac{1}{\epsilon}(u_{n,\epsilon}-S)^{+}\right)\Phi_{n}(u_{n,\epsilon})dxds\label{eq:second estimateforphi} \\
 & \quad+\frac{1}{2}\int_{0}^{t}\sum_{k=1}^{\infty}\int_{\mathbb{T}^{d}}|\sigma^{k}(u_{n,\epsilon})|^{2}\Phi_{n}^{\prime}(u_{n,\epsilon})dxds\nonumber \\
 & \quad+\sum_{k=1}^{\infty}\int_{0}^{t}\int_{\mathbb{T}^{d}}\sigma^{k}(u_{n,\epsilon})\Phi_{n}(u_{n,\epsilon})dxdW_{s}^{k}.\nonumber 
\end{align}
From Assumption \ref{assu:assumption for phi}, we have
\[
\Phi_{n}^{\prime}(r)=\zeta^{2}(r)\leq\left(\left|\zeta(0)\right|+\int_{0}^{r}\left|\zeta^{\prime}(t)\right|dt\right)^{2}\leq C(1+\left|r\right|^{m-1}).
\]
Then, from Assumptions \ref{assu:assumption for sigma} and \ref{assu:inhomogeneous function},
we have
\begin{align}
 & \int_{\mathbb{T}^{d}}\int_{0}^{\xi_{n}}\Phi_{n}(r)drdx+\int_{0}^{T}\int_{\mathbb{T}^{d}}F(s,x,u_{n,\epsilon})\Phi_{n}(u_{n,\epsilon})dxds
 \label{eq:second estimate for init F and 1/2}\\
 & +\int_{0}^{t}\sum_{k=1}^{\infty}\int_{\mathbb{T}^{d}}\left|\sigma^{k}(u_{n,\epsilon})\right|^{2}\Phi_{n}^{\prime}(u_{n,\epsilon})dxds\leq  C\left(1+\left\Vert \xi_{n}\right\Vert _{L_{m+1}(\mathbb{T}^{d})}^{m+1}+\left\Vert u_{n,\epsilon}\right\Vert _{L_{m+1}(Q_{T})}^{m+1}\right)\nonumber
\end{align}
For the last term in the right hand side of (\ref{eq:second estimateforphi}),
applying Burkholder-Davis-Gundy inequality and H\"older's inequality,
we have
\begin{align}
 & \mathbb{E}\sup_{t\leq T}\left|\sum_{k=1}^{\infty}\int_{0}^{t}\int_{\mathbb{T}^{d}}\sigma^{k}(u_{n,\epsilon})\Phi_{n}(u_{n,\epsilon})dxdW_{s}^{k}\right|\nonumber\\
 & \leq C\mathbb{E}\left|\left(\int_{0}^{T}\sum_{k=1}^{\infty}\left(\int_{\mathbb{T}^{d}}|\sigma^{k}(u_{n,\epsilon})|\cdot|\Phi_{n}(u_{n,\epsilon})|dx\right)^{2}ds\right)^{\frac{1}{2}}\right|\nonumber \\
 & \leq C\mathbb{E}\Bigg|\bigg(\int_{0}^{T}\sum_{k=1}^{\infty}\left(\int_{\mathbb{T}^{d}}|\sigma^{k}(u_{n,\epsilon})|^{2}|\Phi_{n}(u_{n,\epsilon})|^{\frac{m-1}{m}}dx\right)\label{eq:secondestimateforstochastic} \\
 & \quad\cdot\left(\int_{\mathbb{T}^{d}}|\Phi_{n}(u_{n,\epsilon})|^{\frac{m+1}{m}}dx\right)ds\bigg)^{\frac{1}{2}}\Bigg|\nonumber \\
 & \leq C\mathbb{E}\left|\left(\int_{0}^{T}\left(\int_{\mathbb{T}^{d}}(1+|u_{n,\epsilon}|^{m+1})dx\right)^{2}ds\right)^{\frac{1}{2}}\right|\nonumber \\
 & \leq\frac{1}{2}\mathbb{E}\sup_{t\leq T}\left\Vert u_{n,\epsilon}\right\Vert _{L_{m+1}(\mathbb{T}^{d})}^{m+1}+C\left(1+\mathbb{E}\left\Vert u_{n,\epsilon}\right\Vert _{L_{m+1}(Q_{T})}^{m+1}\right).\nonumber 
\end{align}
Furthermore, since $u_{n,\epsilon}$ is non-negative and $\Phi_{n}$
is strictly increasing and odd, we have $\Phi_{n}(u_{n,\epsilon})\geq0$
almost everywhere in $Q_{T}$, almost surely, and 
\begin{equation}
-\frac{1}{\epsilon}\int_{0}^{T}\int_{\mathbb{T}^{d}}(u_{n,\epsilon}-S)^{+}\Phi_{n}(u_{n,\epsilon})dxds\leq0.\label{second estimate for penal}
\end{equation}
Combining with (\ref{eq:priori estimate2}) and (\ref{eq:second estimateforphi})-(\ref{second estimate for penal}),
we obtain the desired estimate.
\end{proof}
Using Galerkin approximation method as in \cite{dareiotis2020nonlinear},
we give the existence and uniqueness theorem, which extends \cite[Proposition 5.4]{dareiotis2020nonlinear}
to incorporate the barrier $S$.
\begin{thm}
\label{thm:exuni_approximation equation}Let Assumptions \ref{assu:assumption for phi}-\ref{assu:Assuption for barrier}
hold. Then, for all $n\in\mathbb{N}$ and $\epsilon>0$, $\Pi(\Phi_{n},F-G_{\epsilon}(\cdot,S),\xi_{n})$
admits a unique $L_{2}$-solution $u_{n,\epsilon}$. 
\end{thm}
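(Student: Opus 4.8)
The plan is to establish existence by a Galerkin scheme and uniqueness by a direct $L_2$ estimate, following the strategy of \cite[Proposition 5.4]{dareiotis2020nonlinear} but accounting for the extra penalty term $G_\epsilon(\cdot,S)$. For fixed $n$ and $\epsilon$, I would first observe that the reaction term $u\mapsto F(t,x,u)-G_\epsilon(u,S(\omega,t))$ is jointly Lipschitz in $u$ (with Lipschitz constant $N_0+1/\epsilon$, uniformly in $(\omega,t,x)$ since $G_\epsilon$ is $1/\epsilon$-Lipschitz), adapted, and vanishes at $u=0$ by Assumption~\ref{assu:inhomogeneous function} together with $G_\epsilon(0,S)\equiv 0$; similarly $\Phi_n$ is smooth, increasing, with bounded derivative (Proposition~\ref{prop:Assumptio for coef}), and $\sigma$ satisfies the linear growth and the $1/2+\kappa$-H\"older bound of Assumption~\ref{assu:assumption for sigma}. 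These are exactly the structural hypotheses under which \cite[Proposition 5.4]{dareiotis2020nonlinear} is proven for $\Pi(\Phi_n,F,\xi_n)$; the only new feature is that the drift now carries the bounded-perturbation term $-G_\epsilon(\cdot,S)$, which is harmless because it is a Lipschitz, progressively measurable, $L_2(\Omega_T)$-bounded (by Assumption~\ref{assu:Assuption for barrier}, $S\in L_2(\Omega_T)$) nonlinearity of $(u_{n,\epsilon},S)$.

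For the construction itself I would set up a finite-dimensional Galerkin projection onto $V_N=\mathrm{span}\{e_1,\dots,e_N\}$, where $\{e_j\}$ is a smooth orthonormal basis of $L_2(\mathbb{T}^d)$ (e.g.\ trigonometric polynomials, which are eigenfunctions of $-\Delta$), and consider the system of It\^o SDEs for $u^N_{n,\epsilon}=\sum_{j\le N}c_j^N(t)e_j$ obtained by testing $\Pi(\Phi_n,F-G_\epsilon(\cdot,S),\xi_n)$ against each $e_j$; since all coefficients are Lipschitz on $V_N$, this has a unique strong solution up to an explosion time, and the a priori bound $\mathbb{E}\sup_{t\le T}\|u^N_{n,\epsilon}(t)\|_{L_2}^2<\infty$ obtained by It\^o's formula exactly as in the proof of Theorem~\ref{thm:esimate for u_n,epsilon} (the penalty term has a favorable sign by $S\ge 0$, and the rest is controlled by Gr\"onwall and Burkholder--Davis--Gundy) rules out explosion and yields uniform-in-$N$ estimates. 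Combined with the uniform bound on $\mathbb{E}\|\nabla\llbracket\zeta_n\rrbracket(u^N_{n,\epsilon})\|_{L_2(Q_T)}^2$ and hence on $\mathbb{E}\|\nabla\Phi_n(u^N_{n,\epsilon})\|_{L_2(Q_T)}^2$ (using $\Phi_n'$ bounded), one extracts a weakly convergent subsequence in $L_2(\Omega_T;H^1)$ with $\nabla\Phi_n(u^N_{n,\epsilon})$ converging weakly in $L_2(\Omega_T;L_2)$; the drift and noise terms pass to the limit by the usual monotonicity/Lipschitz argument, identifying the weak limit as an $L_2$-solution in the sense of Definition~\ref{def:L_2 solution}. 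Continuity in $L_2(\mathbb{T}^d)$ of the limit follows from the equation and a standard argument (the time integral terms lie in $H^{-1}$ and the stochastic integral is a continuous $L_2$-valued martingale).

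For uniqueness, given two $L_2$-solutions $u_{n,\epsilon}$ and $v_{n,\epsilon}$ with the same data, I would apply It\^o's formula to $\|u_{n,\epsilon}(t)-v_{n,\epsilon}(t)\|_{L_2(\mathbb{T}^d)}^2$: the difference of the $\Delta\Phi_n$ terms gives $-2\int_0^t\langle\nabla(\Phi_n(u_{n,\epsilon})-\Phi_n(v_{n,\epsilon})),\nabla(u_{n,\epsilon}-v_{n,\epsilon})\rangle\,ds\le 0$ by monotonicity of $\Phi_n$; the difference of the reaction terms is bounded by $(N_0+1/\epsilon)\|u_{n,\epsilon}-v_{n,\epsilon}\|_{L_2}^2$ using that $F$ and $G_\epsilon(\cdot,S)$ are Lipschitz in the state variable; the It\^o correction from $\sigma$ contributes $\sum_k\|\sigma^k(u_{n,\epsilon})-\sigma^k(v_{n,\epsilon})\|_{L_2}^2$, which by Assumption~\ref{assu:assumption for sigma} and an interpolation/Young inequality (the $|r-s|^{1/2+\kappa}$ part is absorbed into a small multiple of the Dirichlet form via $\kappa\le 1/2$, or simply crudely bounded since on bounded sets $\sigma$ is genuinely Lipschitz after truncation) is controlled by $C\|u_{n,\epsilon}-v_{n,\epsilon}\|_{L_2}^2$; the martingale term vanishes in expectation. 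Gr\"onwall's inequality then forces $u_{n,\epsilon}=v_{n,\epsilon}$. The main obstacle I anticipate is the same one that makes $L_2$-theory delicate for these equations: since $\sigma$ is only $1/2+\kappa$-H\"older (not Lipschitz), one cannot directly close the $L_2$-difference estimate, and one must exploit either the gain from the nondegeneracy $\zeta_n\ge 2/n$ (which lets the Dirichlet term $\|\nabla(\Phi_n(u_{n,\epsilon})-\Phi_n(v_{n,\epsilon}))\|^2$ absorb the bad part of the noise, as in \cite{dareiotis2019entropy,dareiotis2020nonlinear}) or an a priori $L_\infty$-type localization; handling this $n$-dependent but $\epsilon$-independent coercivity carefully is the crux, and it is precisely where I would lean on the cited Proposition~5.4 of \cite{dareiotis2020nonlinear}, merely checking that the additional $-G_\epsilon(\cdot,S)$ term does not disturb the argument.
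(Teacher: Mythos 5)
Your overall scaffolding (Galerkin scheme, uniform a priori estimates as in Theorem \ref{thm:esimate for u_n,epsilon}, favorable sign of the penalty term) matches the paper, but the two steps you propose for the core of the argument do not work under Assumption \ref{assu:assumption for sigma}, and they are not what the cited Proposition 5.4 of \cite{dareiotis2020nonlinear} actually does. First, identifying the weak Galerkin limit ``by the usual monotonicity/Lipschitz argument'' requires the joint monotonicity condition $2\langle \Delta\Phi_n(u)-\Delta\Phi_n(v),u-v\rangle+\sum_k\|\sigma^k(u)-\sigma^k(v)\|_{L_2}^2\leq C\|u-v\|_{L_2}^2$, i.e.\ Lipschitz continuity of $\sigma$; here $\sigma$ is only $(1/2+\kappa)$-H\"older plus Lipschitz growth, so the Pardoux--Krylov--Rozovskii identification of the noise term fails, and weak convergence alone does not let you pass to the limit in $\sigma^k(u^N_{n,\epsilon})$ or $\Phi_n(u^N_{n,\epsilon})$. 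This is exactly why the paper does not use the monotonicity method: it proves compactness of the laws in $\mathcal{X}=L_2([0,T];L_2)\cap C([0,T];H^{-2})$ via fractional Sobolev bounds and \cite{flandoli1995martingale}, applies Prokhorov and Skorokhod to pairs of subsequences to get almost sure (hence pointwise) convergence on a new probability space, identifies both limits as $L_2$-solutions there, and then uses the Gy\"ongy--Krylov lemma to return to the original space. Second, your uniqueness argument via It\^o's formula for $\|u_{n,\epsilon}-v_{n,\epsilon}\|_{L_2}^2$ cannot be closed: the It\^o correction produces $\mathbb{E}\int|u_{n,\epsilon}-v_{n,\epsilon}|^{1+2\kappa}dx$, which by H\"older only gives an Osgood-type inequality $y'\leq Cy+Cy^{(1+2\kappa)/2}$ with exponent strictly below one, and this does not force $y\equiv0$. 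Your fallback that ``on bounded sets $\sigma$ is genuinely Lipschitz after truncation'' is false (consider $\sigma(r)=|r|^{1/2+\kappa}$ near the origin, which satisfies Assumption \ref{assu:assumption for sigma}), and the proposed absorption of the H\"older part into the Dirichlet form of the difference is not substantiated and is not the mechanism of the reference you lean on.

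What the paper actually does for both the identification of the Skorokhod limits and the uniqueness of the $L_2$-solution is to exit the $L_2$ framework: by Remark \ref{rem:L2equalentropy} every $L_2$-solution is an entropy solution, by Theorem \ref{thm:star property} (with the Lipschitz continuity of $\bar{G}$ and $\bar{G}(0,\tilde{S})=0$ replacing the $L_2$ bound on the penalty in (\ref{eq:star property estimate of penalty})) it has the $(\star)$-property, and then the $L_1^{+}$ estimate of Theorem \ref{lem:L1} together with Gr\"onwall's inequality yields an $L_1$-contraction, hence $\hat{u}=\check{u}$ and, at the end, uniqueness of $u_{n,\epsilon}$. This Yamada--Watanabe-type $L_1$ argument (where $\eta_\delta''(r)|r|^{1+2\kappa}\leq C\delta^{2\kappa}$ kills the H\"older defect, cf.\ (\ref{eq:sigmaL1})) is the ingredient your proposal is missing; without it, both the limit identification and the uniqueness step in your plan have genuine gaps.
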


\begin{proof}
Since we fix $n\in\mathbb{N}$ and $\epsilon>0$, we relabel $\bar{\Phi}:=\Phi_{n}$,
$\bar{\xi}:=\xi_{n}$ and $\bar{G}:=G_{\epsilon}$ in order to ease
the notation. Let $\{e_{l}\}_{l\in\mathbb{N}^{+}}\subset C^{\infty}(\mathbb{T}^{d})$
be an orthonormal basis of $L_{2}(\mathbb{T}^{d})$ consisting of
eigenvectors of $(I-\Delta)$. Define $H^{-i}(\mathbb{T}^{d})$ as
the dual of $H_{2,0}^{i}(\mathbb{T}^{d})$, equipped with the inner
product of $\langle\cdot,\cdot\rangle_{H^{-i}(\mathbb{T}^{d})}:=\langle(I-\Delta)^{-i/2}\cdot,(I-\Delta)^{-i/2}\cdot\rangle_{L_{2}(\mathbb{T}^{d})}$.
For any $l\in\mathbb{N}^{+}$, let $\varPi_{l}:H^{-1}(\mathbb{T}^{d})\mapsto V_{l}:=\mathrm{span}\{e_{1},\ldots,e_{l}\}$
be the projection operator, which means
\[
\varPi_{l}v:=\sum_{i=1}^{l}\ _{H^{-1}(\mathbb{T}^{d})}\langle v,e_{i}\rangle_{H^{1}(\mathbb{T}^{d})}e_{i},\quad\forall v\in H^{-1}(\mathbb{T}^{d}).
\]
The Galerkin approximation of $\Pi(\bar{\Phi},F-\bar{G}(\cdot,S),\bar{\xi})$
\begin{equation}
\begin{cases}
du_{l}=\varPi_{l}\left(\Delta\bar{\Phi}(u_{l})+F(t,x,u_{l})-\bar{G}(u_{l},S)\right)dt\\
\quad\quad\quad+{\displaystyle \sum_{k=1}^{\infty}}\varPi_{l}\sigma^{k}(u_{l})dW_{t}^{k},\quad(t,x)\in Q_{T};\\
u_{l}(0,x)=\varPi_{l}\bar{\xi}(x),\quad x\in\mathbb{T}^{d}
\end{cases}\label{eq:galerkin}
\end{equation}
is an equation on $V_{l}$, whose coefficients are locally Lipschitz
continuous and have a linear growth. Therefore, it admits a unique
solution $u_{l}$ satisfying
\[
u_{l}\in L_{2}(\Omega_{T};H^{1}(\mathbb{T}^{d}))\cap L_{2}(\Omega;C([0,T];L_{2}(\mathbb{T}^{d}))).
\]
Following the proof of Theorem \ref{thm:esimate for u_n,epsilon},
there exists a constant $C$ independent of $l\in\mathbb{N}^{+}$ such that
\begin{equation}
\mathbb{E}\int_{0}^{T}\left\Vert u_{l}\right\Vert _{H^{1}(\mathbb{T}^{d})}^{2}dt\leq C(1+\mathbb{E}\left\Vert \bar{\xi}\right\Vert _{L_{2}(\mathbb{T}^{d})}^{2}),\quad \text{and}\label{eq:est1 for ul}
\end{equation}
\begin{equation}
\mathbb{E}\sup_{t\leq T}\left\Vert u_{l}(t)\right\Vert _{L_{2}(\mathbb{T}^{d})}^{p}\leq C(1+\mathbb{E}\left\Vert \bar{\xi}\right\Vert _{L_{2}(\mathbb{T}^{d})}^{p}),\quad\forall p\in[2,\infty).\label{eq:est2 for ul}
\end{equation}
Moreover, we have almost surely, for all $t\in[0,T]$
\[
u_{l}(t)=J_{l}^{1}+J_{l}^{2}(t)+J_{l}^{3}(t),\quad\text{in}\ H^{-1}(\mathbb{T}^{d}),
\]
with
\[
J_{l}^{1}:=\varPi_{l}\bar{\xi},
\]
\[
J_{l}^{2}(t):=\int_{0}^{t}\varPi_{l}\left(\Delta\bar{\Phi}(u_{l})+F(s,\cdot,u_{l})-\bar{G}(u_{l},S)\right)ds,\quad\text{and}
\]
\[
J_{l}^{3}(t):=\sum_{k=1}^{\infty}\int_{0}^{t}\varPi_{l}\sigma^{k}(u_{l})dW_{s}^{k}.
\]
Using Sobolev's embedding theorem, inequality (\ref{eq:est1 for ul})
and the Lipschitz continuity of $F$ and $\bar{G}$, we have
\[
\sup_{l}\mathbb{E}\left\Vert J_{l}^{2}\right\Vert _{H_{4}^{\frac{1}{3}}([0,T];H^{-1}(\mathbb{T}^{d}))}^{2}\leq\sup_{l}\mathbb{E}\left\Vert J_{l}^{2}\right\Vert _{H^{1}([0,T];H^{-1}(\mathbb{T}^{d}))}^{2}<\infty.
\]
By \cite[Lemma 2.1]{flandoli1995martingale}, the linear growth of
$\sigma$ and (\ref{eq:est2 for ul}), we have
\[
\sup_{l}\mathbb{E}\left\Vert J_{l}^{3}\right\Vert _{H_{p}^{\alpha}([0,T];H^{-1}(\mathbb{T}^{d}))}^{p}<\infty,\quad\forall\alpha\in(0,\frac{1}{2}),\ p\in[2,\infty).
\]
Using these two estimates and (\ref{eq:est1 for ul}), we get
\[
\sup_{l}\mathbb{E}\left\Vert u_{l}\right\Vert _{H_{4}^{\frac{1}{3}}([0,T];H^{-1}(\mathbb{T}^{d}))\cap L_{2}([0,T];H^{1}(\mathbb{T}^{d}))}<\infty.
\]
Then, \cite[Theorem 2.1, Theorem 2.2]{flandoli1995martingale} yield the embedding
\begin{align*}
 & H_{4}^{\frac{1}{3}}([0,T];H^{-1}(\mathbb{T}^{d}))\cap L_{2}([0,T];H^{1}(\mathbb{T}^{d}))\\
 & \hookrightarrow\mathcal{X}:=L_{2}([0,T];L_{2}(\mathbb{T}^{d}))\cap C([0,T];H^{-2}(\mathbb{T}^{d}))
\end{align*}
is compact. Then, for any sequences $\{l_{q}\}_{q\in\mathbb{N}},\{\bar{l}_{q}\}_{q\in\mathbb{N}}\subset\mathbb{N}^{+}$,
the laws of $(u_{l_{q}},u_{\bar{l}_{q}})$ are tight on $\mathcal{X}\times\mathcal{X}$.
Define
\[
W(t):=\sum_{k=1}^{\infty}\frac{1}{\sqrt{2^{k}}}W_{t}^{k}\mathfrak{e}_{k},
\]
where $\{\mathfrak{e}_{k}\}_{k\in\mathbb{N}^{+}}$ is the standard
orthonormal basis of $l_{2}$. Moreover, from Assumption \ref{assu:Assuption for barrier},
it is easy to find $S\in L_{2}(\Omega;C[0,T])$. By Prokhorov's theorem,
there exists a (non-relabeled) subsequence $(u_{l_{q}},u_{\bar{l}_{q}})$
such that the laws of $\{(u_{l_{q}},u_{\bar{l}_{q}},W(\cdot),S(\cdot))\}_{q\in\mathbb{N}}$
on $\mathcal{Z:=\mathcal{X}}\times\mathcal{X}\times C([0,T];l_{2})\times C([0,T])$
are weakly convergent. By Skorokhod's representation theorem, there
exist $\mathcal{Z}$-valued random variables $(\hat{u},\check{u},\tilde{W}(\cdot),\tilde{S}(\cdot))$,
$\{(\hat{u}_{l_{q}},\check{u}_{\bar{l}_{q}},\tilde{W}_{q}(\cdot),\tilde{S}_{q}(\cdot))\}_{q\in\mathbb{N}}$
on a probability space $(\tilde{\Omega},\tilde{\mathcal{F}},\tilde{\mathbb{P}})$
such that in $\mathcal{Z}$, we have $\tilde{\mathbb{P}}$-almost
surely
\begin{equation}
(\hat{u}_{l_{q}},\check{u}_{\bar{l}_{q}},\tilde{W}_{q}(\cdot),\tilde{S}_{q}(\cdot))\xrightarrow{q\rightarrow\infty}(\hat{u},\check{u},\tilde{W}(\cdot),\tilde{S}(\cdot))\label{eq:limit for triple}
\end{equation}
and 
\begin{equation}
(\hat{u}_{l_{q}},\check{u}_{\bar{l}_{q}},\tilde{W}_{q}(\cdot),\tilde{S}_{q}(\cdot))\overset{d}{=}(u_{l_{q}},u_{\bar{l}_{q}},W(\cdot),S(\cdot)),\quad\forall q\in\mathbb{N}.\label{eq:equ distribution}
\end{equation}
Therefore for all $q\in\mathbb{N}$, we have
\[
\tilde{S}_{q}(t),\tilde{S}(t)\geq0,\quad\forall t\in[0,T],\quad\textrm{a.s.}\ \tilde{\omega}\in\tilde{\Omega}.
\]
Moreover, after passing to a non-relabeled subsequence $\{l_{q}\}_{q\in\mathbb{N}}$
and $\{\bar{l}_{q}\}_{q\in\mathbb{N}}$, we may assume that
\[
(\hat{u}_{l_{q}},\check{u}_{\bar{l}_{q}})\xrightarrow{q\rightarrow\infty}(\hat{u},\check{u}),\quad\textrm{a.s.}\ (\tilde{\omega},t,x)\in\tilde{\Omega}_{T}\times \mathbb{T}^{d}.
\]
Let $\{\tilde{\mathcal{F}}_{t}\}$ be the augmented filtration of
$\mathcal{G}_{t}:=\sigma(\hat{u}(s),\check{u}(s),\tilde{W}(s),\tilde{S}(s)|s\leq t)$,
and define $\tilde{W}_{q,t}^{k}:=\sqrt{2^{k}}\langle\tilde{W}_{q}(t),\mathfrak{e}_{k}\rangle_{l_{2}}$
and $\tilde{W}_{t}^{k}:=\sqrt{2^{k}}\langle\tilde{W}(t),\mathfrak{e}_{k}\rangle_{l_{2}}$.
As in the proof of \cite[Proposition 5.4]{dareiotis2020nonlinear},
it is easy to see that $\{\tilde{W}_{\cdot}^{k}\}_{k\in\mathbb{N}^{+}}$
are independent, standard and real-valued $\{\tilde{\mathcal{F}}_{t}\}$-adapted
Wiener processes. Moreover, Note that $\bar{G}(0,\tilde{S})=0$, $F(t,x,0)=0$
and $\{\tilde{S}_{q}\}_{q\in\mathbb{N}}$ is uniformly integrable.
Combining the Lipschitz continuity of $F$ and $\bar{G}$ with the
proof of \cite[Proposition 5.4]{dareiotis2020nonlinear}, we can prove
that both $\hat{u}$ and $\check{u}$ are $L_{2}$-solutions of $\ensuremath{\Pi(\bar{\Phi},F-\bar{G}(\cdot,\tilde{S}),\hat{\xi})}$,
where $\ensuremath{\hat{\xi}:=\hat{u}(0)}$. By Remark \ref{rem:L2equalentropy},
functions $\hat{u}$ and $\check{u}$ are also entropy solutions under
the Definition \ref{def:entropy solution without}.

Then, applying the Lipschitz continuity of $\bar{G}$ and $\bar{G}(0,\tilde{S})=0$
instead of the $L_{2}$ estimate of $\bar{G}$ in (\ref{eq:star property estimate of penalty}),
from Theorem \ref{thm:star property}, we know that both $\hat{u}$
and $\check{u}$ have the $(\star)$-property. By Theorem \ref{lem:L1}
(choose $G(t,x,r)=\tilde{G}(t,x,r)=\bar{G}(r,\tilde{S}(t))$) and
Gr\"onwall's inequality, we have $\hat{u}=\check{u}$ as in the proof
of Lemma \ref{lem:Lemma for comparison}. Based on \cite[Lemma 1]{gyongy1996existence},
we have that the initial sequence $\{u_{l}\}_{l=1}^{\infty}$ converges
in probability to some $u\in\mathcal{X}$. From this convergence and
the uniform estimates on $u_{l}$, one can pass to the limit in (\ref{eq:galerkin})
and obtain that $u$ is an $L_{2}$-solution of $\ensuremath{\Pi(\bar{\Phi},F-\bar{G}(\cdot,S),\bar{\xi})}$.

Note that the $L_{2}$-solution of $\ensuremath{\Pi(\bar{\Phi},F-\bar{G}(\cdot,S),\bar{\xi})}$
is also an entropy solution and has $(\star)$-property, then the
uniqueness of $u$ is acquired by applying Theorem \ref{lem:L1} and
Gr\"onwall's inequality as in the proof of Lemma \ref{lem:Lemma for comparison}.
\end{proof}
We have the following estimate for the penalty term $G_{\epsilon}$.
\begin{thm}
\label{thm:estimate for L1}Let Assumptions \ref{assu:assumption for phi}-\ref{assu:Assuption for barrier}
hold. Then, for all $n\in\mathbb{N}$ and $\epsilon>0$, there exists
a constant C independent of $n$ and $\epsilon$ such that
\[
\mathbb{E}\int_{0}^{t}\int_{\mathbb{T}^{d}}G_{\epsilon}(u_{n,\epsilon},S)dxds\leq C\left(1+\mathbb{E}\left\Vert \xi_{n}\right\Vert _{L_{2}(\mathbb{T}^{d})}^{2}\right).
\]
\end{thm}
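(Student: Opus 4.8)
The goal is to bound the $L^1(\Omega_T\times\mathbb{T}^d)$ norm of the penalty term $G_\epsilon(u_{n,\epsilon},S) = (u_{n,\epsilon}-S)^+/\epsilon$ uniformly in both $n$ and $\epsilon$. Since $u_{n,\epsilon}$ solves $\Pi(\Phi_n, F - G_\epsilon(\cdot,S),\xi_n)$ and $S$ solves $dS = h_S\,dt + \sum_k\sigma^k(S)\,dW^k_t$, the natural object to test against is the difference $u_{n,\epsilon}-S$, and the natural functional is a smooth approximation of the $L^1$ norm of its positive part. The plan is to apply It\^o's formula (as in \cite[Lemma 2]{dareiotis2015boundedness}, valid because the $L_2$-solution $u_{n,\epsilon}$ has the regularity asserted in Definition \ref{def:L_2 solution} and $S$ is a nice It\^o process) to a functional $\psi_\delta(u_{n,\epsilon}-S)$, where $\psi_\delta\in C^2$ approximates $r\mapsto r^+$ from above with $\psi_\delta'\to\mathbf{1}_{(0,\infty)}$ and $0\le\psi_\delta''\le C/\delta$ supported on $(0,\delta)$; integrate over $\mathbb{T}^d$ and take expectations.

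First I would write down the It\^o expansion of $\mathbb{E}\int_{\mathbb{T}^d}\psi_\delta(u_{n,\epsilon}(t)-S(t))\,dx$. The stochastic integral drops under expectation. The drift produces: (a) the diffusion term $\int\psi_\delta'(u_{n,\epsilon}-S)\big(\Delta\Phi_n(u_{n,\epsilon}) - \Delta\Phi_n(S)\big)\,dx$ — wait, more precisely $S$ has no Laplacian term, so this is $\int\psi_\delta'(u_{n,\epsilon}-S)\Delta\Phi_n(u_{n,\epsilon})\,dx = -\int\psi_\delta''(u_{n,\epsilon}-S)\,\Phi_n'(u_{n,\epsilon})\,\nabla(u_{n,\epsilon}-S)\cdot\nabla u_{n,\epsilon}\,dx$ after integrating by parts (using $\psi_\delta\in C^2$ and the $H^1$ regularity of $u_{n,\epsilon}$; $S$ is spatially constant so $\nabla S = 0$ and this term is $-\int\psi_\delta''(u_{n,\epsilon}-S)\,\Phi_n'(u_{n,\epsilon})|\nabla u_{n,\epsilon}|^2\,dx \le 0$, which we discard); (b) the reaction term $\int\psi_\delta'(u_{n,\epsilon}-S)\big(F(t,x,u_{n,\epsilon}) - h_S\big)\,dx$, bounded using $F(t,x,0)=0$, the Lipschitz bound on $F$ and $|\psi_\delta'|\le 1$, by $C(1 + \|u_{n,\epsilon}\|_{L_2}^2 + \|h_S\|_{L_2}^2)$; (c) the crucial penalty term $-\frac1\epsilon\int\psi_\delta'(u_{n,\epsilon}-S)(u_{n,\epsilon}-S)^+\,dx$; and (d) the It\^o correction $\frac12\int\psi_\delta''(u_{n,\epsilon}-S)\sum_k|\sigma^k(u_{n,\epsilon}) - \sigma^k(S)|^2\,dx$, which by Assumption \ref{assu:assumption for sigma} is controlled by $\frac{C}{\delta}\int\psi_\delta''(u_{n,\epsilon}-S)^{\text{supp}}\big(|u_{n,\epsilon}-S|^{1+2\kappa} + |u_{n,\epsilon}-S|^2\big)\,dx$; since $\psi_\delta''$ is supported where $0<u_{n,\epsilon}-S<\delta$ and $\int\psi_\delta''\le C$, this is $\le C(\delta^{2\kappa} + \delta)\cdot|\mathbb{T}^d| \to 0$ as $\delta\to 0$ (here $\kappa>0$ is what makes this vanish).

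The key algebraic point is that $\psi_\delta'(r)r^+ \ge r^+ - C\delta$ for all $r$, so term (c) is bounded above by $-\frac1\epsilon\int(u_{n,\epsilon}-S)^+\,dx + \frac{C\delta}{\epsilon}|\mathbb{T}^d|$ — but the $\delta/\epsilon$ term is awkward since we want uniformity in $\epsilon$. I would instead choose $\psi_\delta'(r) = 1$ for $r\ge\delta$ and note that then $\psi_\delta'(r)r^+ = r^+$ on $\{r\ge\delta\}$ while on $\{0<r<\delta\}$ one has $(u_{n,\epsilon}-S)^+/\epsilon < \delta/\epsilon$; rather than fight this, the cleaner route is to first integrate in time: $\mathbb{E}\int_{\mathbb{T}^d}\psi_\delta(u_{n,\epsilon}(t)-S(t))\,dx \le \mathbb{E}\int_{\mathbb{T}^d}\psi_\delta(\xi_n - S_0)\,dx + (\text{bounds (b),(d)}) - \frac1\epsilon\mathbb{E}\int_0^t\int_{\mathbb{T}^d}\psi_\delta'(u_{n,\epsilon}-S)(u_{n,\epsilon}-S)^+\,dx\,ds$. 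Since $\xi_n\le S_0$ by Assumption \ref{assu:Assuption for barrier} (which $\xi_n$ inherits), $\psi_\delta(\xi_n-S_0)=0$. Rearranging and letting $\delta\to 0^+$, the term $\frac1\epsilon\mathbb{E}\int_0^t\int\psi_\delta'(u_{n,\epsilon}-S)(u_{n,\epsilon}-S)^+ \to \frac1\epsilon\mathbb{E}\int_0^t\int (u_{n,\epsilon}-S)^+ = \mathbb{E}\int_0^t\int G_\epsilon(u_{n,\epsilon},S)$ by monotone convergence (since $\psi_\delta'\uparrow\mathbf{1}_{(0,\infty)}$ can be arranged), the bad $\delta$-dependent remainders vanish, and one is left with
\[
\mathbb{E}\int_0^t\int_{\mathbb{T}^d}G_\epsilon(u_{n,\epsilon},S)\,dx\,ds \le C\Big(1 + \mathbb{E}\int_0^T\|u_{n,\epsilon}(s)\|_{L_2(\mathbb{T}^d)}^2\,ds + \mathbb{E}\|h_S\|_{L_2(\Omega_T)}^2\Big).
\]
Finally I would invoke the a priori estimate \eqref{eq:priori estimate1} from Theorem \ref{thm:esimate for u_n,epsilon} (with $p=2$) to replace the middle term by $C(1+\mathbb{E}\|\xi_n\|_{L_2(\mathbb{T}^d)}^2)$, and absorb $\mathbb{E}\|h_S\|^2_{L_2(\Omega_T)}$ into the constant (it is finite by Assumption \ref{assu:Assuption for barrier} and independent of $n,\epsilon$), giving the claim.

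\emph{Main obstacle.} The delicate point is the interplay between the mollification parameter $\delta$ and the penalization parameter $\epsilon$: the naive bound on $\psi_\delta'(r)r^+$ produces an error of order $\delta/\epsilon$ in the penalty term, which does not vanish if $\delta$ is sent to zero after $\epsilon$ is fixed but blows up if the limits are interchanged carelessly. The resolution is to keep the full term $-\frac1\epsilon\int\psi_\delta'(u_{n,\epsilon}-S)(u_{n,\epsilon}-S)^+$ on the right-hand side without estimating $\psi_\delta'$ pointwise, move it to the left, and only then let $\delta\to 0$, using monotone convergence so that no $\delta/\epsilon$ ever appears; the only $\delta$-dependence left is in the It\^o-correction term (d), which vanishes precisely because $\kappa>0$ gives a genuinely sub-quadratic modulus of continuity for $\sigma$ near the diagonal. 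A secondary technical point is justifying the It\^o formula and the integration by parts on the Laplacian term for the $L_2$-solution, which is handled exactly as in the proof of Theorem \ref{thm:esimate for u_n,epsilon} via \cite[Lemma 2]{dareiotis2015boundedness}.
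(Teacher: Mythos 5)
Your argument is correct, but it follows a genuinely different route from the paper's proof of this theorem. The paper applies It\^{o}'s formula to the quadratic functional $\left\Vert u_{n,\epsilon}+1\right\Vert _{L_{2}(\mathbb{T}^{d})}^{2}$: the penalty term paired against the constant $1$ in $u_{n,\epsilon}+1$ directly produces $\frac{1}{\epsilon}\int(u_{n,\epsilon}-S)^{+}$ (and as a by-product the term $\frac{1}{\epsilon}\int|(u_{n,\epsilon}-S)^{+}|^{2}$), using only the non-negativity of $S$ and $u_{n,\epsilon}$, a Burkholder--Davis--Gundy bound on the stochastic integral, and the a priori estimate (\ref{eq:priori estimate1}); the It\^{o}-process structure of $S$, the condition $\xi_{n}\leq S_{0}$, $h_{S}$, and the H\"older continuity of $\sigma$ play no role there. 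You instead test the equation for the difference $u_{n,\epsilon}-S$ with a smoothed positive-part functional $\psi_{\delta}$, which is essentially the strategy the paper reserves for the stronger $L_{2}$ bound in Theorem \ref{thm:estimate for L2 for G}, only with a sublinear test function: you need Assumption \ref{assu:Assuption for barrier} in full ($\xi_{n}\leq S_{0}$ to kill the initial term, $h_{S}\in L_{2}(\Omega_{T})$ in the drift) and the $(1/2+\kappa)$-H\"older continuity of $\sigma$ so that the second-order It\^{o} correction is $O(\delta^{2\kappa}+\delta)$ and vanishes, while your handling of the penalty term (keep $\frac{1}{\epsilon}\int\psi_{\delta}^{\prime}(u_{n,\epsilon}-S)(u_{n,\epsilon}-S)^{+}$ intact, pass to $\delta\rightarrow0^{+}$ by monotone/Fatou convergence) correctly avoids any $\delta/\epsilon$ error. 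Both proofs close with estimate (\ref{eq:priori estimate1}); yours trades the BDG estimate (your martingale term simply has zero expectation) for a mollification limit and a slightly more delicate It\^{o}-formula justification for the non-quadratic functional, and it leans on more of the obstacle's structure than the paper's argument does (all of it available under the standing assumptions, so this costs nothing here). One trivial slip: the $\psi_{\delta}$ you describe, with $\psi_{\delta}^{\prime\prime}$ supported in $(0,\delta)$ and $\psi_{\delta}(r)=0$ for $r\leq0$, approximates $r\mapsto r^{+}$ from below, not from above; this is harmless since the properties you actually use ($\psi_{\delta}\geq0$, $\psi_{\delta}=0$ on $(-\infty,0]$, $\psi_{\delta}^{\prime}\uparrow\mathbf{1}_{(0,\infty)}$, $0\leq\psi_{\delta}^{\prime\prime}\leq C\delta^{-1}$) all hold for that choice.
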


\begin{proof}
Applying It\^{o}'s formula, we have
\begin{align*}
 & \int_{\mathbb{T}^{d}}\left(u_{n,\epsilon}(t,x)+1\right)^{2}dx\\
 & =\int_{\mathbb{T}^{d}}\left(\xi_{n}(x)+1\right)^{2}dx-2\int_{0}^{t}\langle\partial_{x_{i}}\Phi_{n}(u_{n,\epsilon}),\partial_{x_{i}}u_{n,\epsilon}\rangle_{L_{2}(\mathbb{T}^{d})}ds\\
 & \quad+2\int_{0}^{t}\int_{\mathbb{T}^{d}}\left(F(s,x,u_{n,\epsilon})-\frac{1}{\epsilon}(u_{n,\epsilon}-S)^{+}\right)\left(u_{n,\epsilon}+1\right)dxds\\
 & \quad+2\sum_{k=1}^{\infty}\int_{0}^{t}\langle\sigma^{k}(u_{n,\epsilon}),u_{n,\epsilon}+1\rangle_{L_{2}(\mathbb{T}^{d})}dW_{s}^{k}+\int_{0}^{t}\sum_{k=1}^{\infty}\left\Vert \sigma^{k}(u_{n,\epsilon})\right\Vert _{L_{2}(\mathbb{T}^{d})}^{2}ds.
\end{align*}
As in the proof of Theorem \ref{thm:esimate for u_n,epsilon}, with
Assumptions \ref{assu:assumption for phi}-\ref{assu:inhomogeneous function},
we have
\begin{align}
 & 2\int_{0}^{t}\int_{\mathbb{T}^{d}}\frac{1}{\epsilon}|(u_{n,\epsilon}-S)^{+}|^{2}dxds+2\int_{0}^{t}\int_{\mathbb{T}^{d}}\frac{1}{\epsilon}(u_{n,\epsilon}-S)^{+}dxds\nonumber\\
 & \leq C+\left\Vert \xi_{n}\right\Vert _{L_{2}(\mathbb{T}^{d})}^{2}+C\left\Vert u_{n.\epsilon}\right\Vert _{L_{2}(Q_{T})}^{2}\label{eq:u+1testfunction} \\
 & \quad+2\sum_{k=1}^{\infty}\int_{0}^{t}\langle\sigma^{k}(u_{n,\epsilon}),u_{n,\epsilon}+1\rangle_{L_{2}(\mathbb{T}^{d})}dW_{s}^{k}.\nonumber 
\end{align}
Since
\begin{align*}
 & \mathbb{E}\left|\sum_{k=1}^{\infty}\int_{0}^{t}\langle\sigma^{k}(u_{n,\epsilon}),u_{n,\epsilon}+1\rangle_{L_{2}(\mathbb{T}^{d})}dW_{s}^{k}\right|\\
 & \leq\mathbb{E}\left[\left|\sum_{k=1}^{\infty}\int_{0}^{t}\langle\sigma^{k}(u_{n,\epsilon}),u_{n,\epsilon}+1\rangle_{L_{2}(\mathbb{T}^{d})}^{2}ds\right|^{\frac{1}{2}}\right]\\
 & \leq\mathbb{E}\left[\left|\sum_{k=1}^{\infty}\int_{0}^{t}\left(\int_{\mathbb{T}^{d}}|\sigma^{k}(u_{n,\epsilon})|^{2}dx\right)\left(\int_{\mathbb{T}^{d}}|u_{n,\epsilon}+1|^{2}dx\right)ds\right|^{\frac{1}{2}}\right]\\
 & \leq C+C\mathbb{E}\left[\left|\int_{0}^{t}\left\Vert u_{n,\epsilon}\right\Vert _{L_{2}(\mathbb{T}^{d})}^{4}ds\right|^{\frac{1}{2}}\right]\\
 & \leq C+C\mathbb{E}\sup_{t\in[0,T]}\left\Vert u_{n,\epsilon}(t)\right\Vert _{L_{2}(\mathbb{T}^{d})}^{2}+C\mathbb{E}\left\Vert u_{n,\epsilon}\right\Vert _{L_{2}(Q_{T})}^{2},
\end{align*}
using Lemma \ref{Lem:nonnegative of u_n,e}, and inequalities (\ref{eq:priori estimate1})
and (\ref{eq:u+1testfunction}), we obtain the desired inequality.
\end{proof}
To obtain the $L_{2}$ estimate of $G_{\epsilon}(u_{n,\epsilon},S)$,
the specific form of stochastic differential equation in Assumption
\ref{assu:Assuption for barrier} is crucial, which gives that the
difference $u_{n,\epsilon}-S$ has bounded variation when $u_{n,\epsilon}=S$. 

Actually, the local martingale part will make it
fail to obtain better a priori estimate for the penalty term. However,
this term has no affect to the obstacle problem for backward equations
(cf. \cite{qiu2014quasi,yang2013dynkin}).
\begin{thm}
\label{thm:estimate for L2 for G}Let Assumptions \ref{assu:assumption for phi}-\ref{assu:Assuption for barrier}
hold. Then, for all $n\in\mathbb{N}$ and $\epsilon>0$, there exists
a constant C independent of $n$ and $\epsilon$ such that
\begin{align}
&\frac{1}{\epsilon}\mathbb{E}\left[\text{\ensuremath{\sup_{t\in[0,T]}}}\int_{\mathbb{T}^{d}}|(u_{n,\epsilon}-S)^{+}(t)|^{2}dx\right]+\frac{1}{\epsilon^{2}}\mathbb{E}\int_{0}^{T}\left\Vert (u_{n,\epsilon}-S)^{+}\right\Vert _{L_{2}(\mathbb{T}^{d})}^{2}dt\nonumber\\
&\leq C\left(1+\mathbb{E}\left\Vert \xi_{n}\right\Vert _{L_{2}(\mathbb{T}^{d})}^{2}\right).\label{eq:estimate for nu}
\end{align}
\end{thm}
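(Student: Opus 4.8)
The plan is to apply It\^o's formula to $\|(u_{n,\epsilon}-S)^{+}(t)\|_{L_{2}(\mathbb{T}^{d})}^{2}$ and to exploit the precise structure of the barrier dynamics in Assumption \ref{assu:Assuption for barrier}. Set $v:=u_{n,\epsilon}-S$; subtracting (\ref{eq:assforS1}) from the penalized equation, $v$ solves
\[
dv=\Big[\Delta\Phi_{n}(u_{n,\epsilon})+F(t,x,u_{n,\epsilon})-\tfrac{1}{\epsilon}v^{+}-h_{S}\Big]dt+\sum_{k=1}^{\infty}\big(\sigma^{k}(u_{n,\epsilon})-\sigma^{k}(S)\big)dW_{t}^{k},
\]
with $v(0)=\xi_{n}-S_{0}\le0$ since $\xi_{n}$ satisfies Assumption \ref{assu:Assuption for barrier}. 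The crucial gain over the penalized equation itself is that here the martingale coefficient $\sigma^{k}(u_{n,\epsilon})-\sigma^{k}(S)$ is controlled by $|v|=v^{+}$ on $\{v>0\}$ through Assumption \ref{assu:assumption for sigma}. Applying It\^o's formula (cf.\ \cite[Lemma 2]{dareiotis2015boundedness}, after mollifying $r\mapsto(r^{+})^{2}$, since $\Delta\Phi_{n}(u_{n,\epsilon})$ only lies in $H^{-1}$) to $\|v^{+}\|_{L_{2}(\mathbb{T}^{d})}^{2}$: the initial term vanishes because $v(0)^{+}\equiv0$; the diffusion term equals $-2\int_{\mathbb{T}^{d}}\mathbf{1}_{\{v>0\}}\Phi_{n}'(u_{n,\epsilon})|\nabla u_{n,\epsilon}|^{2}dx\le0$, using $\nabla S=0$ and $\Phi_{n}'\ge0$, and is dropped; and the penalty term produces $-\tfrac{2}{\epsilon}\|v^{+}\|_{L_{2}(\mathbb{T}^{d})}^{2}$, which stays on the left.

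Next I multiply the resulting inequality by $1/\epsilon$, take $\sup_{t\le t'}$ and expectations, and apply the Burkholder--Davis--Gundy inequality to the stochastic integral. On $\{v>0\}$ one has $u_{n,\epsilon}=v^{+}+S$, so, using $u_{n,\epsilon}\ge0$ (Lemma \ref{Lem:nonnegative of u_n,e}), $F(t,x,0)=0$ (Assumption \ref{assu:inhomogeneous function}) and $S\ge0$, the drift contributes only multiples of $\int(v^{+})^{2}$, $\int v^{+}S$ and $\int v^{+}h_{S}$; the quadratic variation is bounded, after squaring Assumption \ref{assu:assumption for sigma}, by $2K^{2}\int_{\{v>0\}}\big((v^{+})^{1+2\kappa}+(v^{+})^{2}\big)dx$; and Cauchy--Schwarz on $\{v>0\}$ bounds the BDG term by $\tfrac{C}{\epsilon}\mathbb{E}\big(\int_{0}^{t'}\|v^{+}\|_{L_{2}(\mathbb{T}^{d})}^{2}\int_{\{v>0\}}((v^{+})^{1+2\kappa}+(v^{+})^{2})\,dx\,ds\big)^{1/2}$.

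The final step is to dispose of each term, noting first that the left-hand side is a priori finite by (\ref{eq:priori estimate1}) (indeed $0\le(v^{+})^{2}\le u_{n,\epsilon}^{2}$). All the purely quadratic contributions $\tfrac{1}{\epsilon}\mathbb{E}\int_{0}^{T}\int(v^{+})^{2}$ (from the diffusion, from $F$, and from the BDG term after a Young split $xy\le\bar\varepsilon x^{2}+\tfrac{1}{4\bar\varepsilon}y^{2}$) are already $\le C(1+\mathbb{E}\|\xi_{n}\|_{L_{2}(\mathbb{T}^{d})}^{2})$ by (\ref{eq:priori estimate1}) with $p=2$; the $\mathbb{E}\sup_{t\le t'}\|v^{+}\|_{L_{2}(\mathbb{T}^{d})}^{2}$ piece from BDG is absorbed into the left side by taking $\bar\varepsilon$ small. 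For the sub-quadratic term $(v^{+})^{1+2\kappa}$ and the cross terms $v^{+}S$, $v^{+}h_{S}$ I apply the \emph{$\epsilon$-weighted} Young inequality pointwise in $(\omega,t,x)$ \emph{before} integrating, namely $(v^{+})^{1+2\kappa}\le\tfrac{\mu}{\epsilon}(v^{+})^{2}+C(\mu)\epsilon^{(1+2\kappa)/(1-2\kappa)}$ and $v^{+}\varphi\le\tfrac{\mu}{\epsilon}(v^{+})^{2}+\tfrac{\epsilon}{\mu}\varphi^{2}$ for $\varphi\in\{S,h_{S}\}$; after the overall factor $1/\epsilon$ each splits into a term $\tfrac{\mu}{\epsilon^{2}}(v^{+})^{2}$, to be absorbed into $\tfrac{2}{\epsilon^{2}}\mathbb{E}\int_{0}^{t'}\|v^{+}\|_{L_{2}(\mathbb{T}^{d})}^{2}$, plus a remainder proportional to $\epsilon^{4\kappa/(1-2\kappa)}$, $\mathbb{E}\int_{0}^{T}\int S^{2}$ or $\mathbb{E}\int_{0}^{T}\int h_{S}^{2}$ --- all finite (the latter two by Assumption \ref{assu:Assuption for barrier}) and bounded for $\epsilon\le1$. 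Choosing the finitely many parameters $\mu$ small enough, depending only on $K$, $N_{0}$, $d$, $T$ and the BDG constant, so that the absorbed coefficients sum to less than $2$, leaves $\tfrac{1}{\epsilon}\mathbb{E}\sup_{t\le t'}\|v^{+}\|_{L_{2}(\mathbb{T}^{d})}^{2}+\tfrac{1}{\epsilon^{2}}\mathbb{E}\int_{0}^{t'}\|v^{+}\|_{L_{2}(\mathbb{T}^{d})}^{2}$ on the left and yields (\ref{eq:estimate for nu}) for $\epsilon\le1$; for $\epsilon>1$ the bound is immediate from $(v^{+})^{2}\le u_{n,\epsilon}^{2}$ and (\ref{eq:priori estimate1}).

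The main obstacle is exactly the sub-quadratic term $(v^{+})^{1+2\kappa}$ with $1+2\kappa<2$, which comes from the $\tfrac12+\kappa$-H\"older part of $\sigma$ in the quadratic variation and in BDG. Integrating it first and then invoking (\ref{eq:priori estimate1}), which only gives $\mathbb{E}\|v^{+}\|_{L_{2}(Q_{T})}^{2}\lesssim\epsilon$, would cost a factor $\epsilon^{-(1-2\kappa)/2}$ and diverge; one must instead interpolate it against the $\epsilon^{-2}$-weighted quadratic term at the \emph{pointwise} level, as above. The exponent $(1+2\kappa)/(1-2\kappa)$ on the Young remainder exceeds $1$ precisely because $\kappa>0$, which is exactly what makes the remainder stay bounded after division by $\epsilon$; the case $\kappa=\tfrac12$ is trivial since then $(v^{+})^{1+2\kappa}=(v^{+})^{2}$ and no interpolation is needed. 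A minor additional technicality is the justification of the It\^o formula for $\|(u_{n,\epsilon}-S)^{+}\|_{L_{2}(\mathbb{T}^{d})}^{2}$, handled by the usual mollification of $r\mapsto(r^{+})^{2}$.
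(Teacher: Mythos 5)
Your proposal is correct, and its skeleton coincides with the paper's: both write the SDE for $v:=u_{n,\epsilon}-S$, apply It\^o's formula to $\left\Vert v^{+}\right\Vert _{L_{2}(\mathbb{T}^{d})}^{2}$, kill the initial term via $\xi_{n}\leq S_{0}$, drop the nonpositive diffusion term using $\nabla S=0$ and $\Phi_{n}^{\prime}\geq0$, keep the penalty contribution $-\tfrac{2}{\epsilon}\left\Vert v^{+}\right\Vert _{L_{2}}^{2}$ (which after multiplying by $1/\epsilon$ supplies the $\epsilon^{-2}$ term on the left), and control the martingale part by Burkholder--Davis--Gundy together with Assumption \ref{assu:assumption for sigma}. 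The genuine difference is how the sub-quadratic H\"older contribution $(v^{+})^{1+2\kappa}$ (and the cross terms with $S$, $h_{S}$, $F$) is disposed of. The paper bounds $x^{1+2\kappa}\leq x+x^{2}$, which produces the term $\tfrac{C}{\epsilon}\left\Vert v^{+}\right\Vert _{L_{1}(\Omega_{T}\times\mathbb{T}^{d})}$ and therefore invokes Theorem \ref{thm:estimate for L1} (the $L_{1}$ bound on the penalty, itself proved by a separate It\^o argument with $(u_{n,\epsilon}+1)^{2}$), besides estimate (\ref{eq:priori estimate1}); it also treats the drift globally via $\left\Vert F-h_{S}\right\Vert _{L_{2}}^{2}\leq C(1+\left\Vert u_{n,\epsilon}\right\Vert _{L_{2}}^{2})$. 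You instead perform a pointwise $\epsilon$-weighted Young interpolation, $(v^{+})^{1+2\kappa}\leq\tfrac{\mu}{\epsilon}(v^{+})^{2}+C(\mu)\epsilon^{(1+2\kappa)/(1-2\kappa)}$ (and similarly for $v^{+}S$, $v^{+}h_{S}$), absorbing the quadratic pieces into the $\epsilon^{-2}$ term on the left; this is valid (the exponent computation is right, the $\kappa=1/2$ case is trivial, the a priori finiteness needed for absorption is correctly supplied by $(v^{+})^{2}\leq u_{n,\epsilon}^{2}$ and (\ref{eq:priori estimate1}), and $\mathbb{E}\int_{0}^{T}S^{2}dt<\infty$ follows from Assumption \ref{assu:Assuption for barrier} via the SDE and the linear growth of $\sigma$, as the paper itself notes when asserting $S\in L_{2}(\Omega;C[0,T])$). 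What each approach buys: the paper's termwise bounds are simpler but make the theorem depend on the additional Theorem \ref{thm:estimate for L1}, whereas your route is self-contained modulo Theorem \ref{thm:esimate for u_n,epsilon} at the price of slightly more careful exponent bookkeeping and smallness choices; your remark that a naive H\"older estimate of $\tfrac{1}{\epsilon}\mathbb{E}\int(v^{+})^{1+2\kappa}$ through the $L_{2}$ bound alone would lose a factor $\epsilon^{-(1-2\kappa)/2}$ is accurate and explains why either the extra $L_{1}$ lemma or your pointwise interpolation is needed.
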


\begin{proof}
We consider the equation
\[
\begin{cases}
d\big(u_{n,\epsilon}-S\big)=\big[\Delta\Phi_{n}(u_{n,\epsilon})+F(t,x,u_{n,\epsilon})-{\displaystyle \frac{1}{\epsilon}}(u_{n,\epsilon}-S)^{+}-h_{S}\big]dt\\
\quad\quad\quad\quad\quad\quad+{\displaystyle \sum_{k=1}^{\infty}}\big[\sigma^{k}(u_{n,\epsilon})-\sigma^{k}(S)\big]dW_{t}^{k},\quad(t,x)\in Q_{T};\\
u_{n,\epsilon}(0,x)-S(0)=\xi_{n}(x)-S_{0},\quad x\in\mathbb{T}^{d}.
\end{cases}
\]
Using It\^{o}'s formula (cf. the proof of \cite[Lemma 5.1]{yang2013dynkin}),
we have
\[
\frac{1}{\epsilon}\int_{\mathbb{T}^{d}}|(u_{n,\epsilon}-S)^{+}(t)|^{2}dx=\sum_{l=1}^{5}I_{l},
\]
where
\[
I_{1}:=\frac{1}{\epsilon}\int_{\mathbb{T}^{d}}|(\xi_{n}-S_{0})^{+}|^{2}dx,
\]
\[
I_{2}:=\frac{2}{\epsilon}\int_{0}^{t}\langle\Delta\Phi_{n}(u_{n,\epsilon}),(u_{n,\epsilon}-S)^{+}\rangle_{L_{2}(\mathbb{T}^{d})}ds,
\]
\[
I_{3}:=\frac{2}{\epsilon}\int_{0}^{t}\langle F(s,\cdot,u_{n,\epsilon})-\frac{1}{\epsilon}(u_{n,\epsilon}-S)^{+}-h_{S},(u_{n,\epsilon}-S)^{+}\rangle_{L_{2}(\mathbb{T}^{d})}ds,
\]
\[
I_{4}:=\frac{2}{\epsilon}\sum_{k=1}^{\infty}\int_{0}^{t}\langle\sigma^{k}(u_{n,\epsilon})-\sigma^{k}(S),(u_{n,\epsilon}-S)^{+}\rangle_{L_{2}(\mathbb{T}^{d})}dW_{s}^{k},
\]
\[
I_{5}:=\frac{1}{\epsilon}\sum_{k=1}^{\infty}\int_{0}^{t}\left\Vert \left(\sigma^{k}(u_{n,\epsilon})-\sigma^{k}(S)\right)\mathbf{1}_{\left\{ u_{n,\epsilon}\geq S\right\} }\right\Vert _{L_{2}(\mathbb{T}^{d})}^{2}ds.
\]
They are estimated below. Since $\xi_{n}\leq S_{0}$, we have $I_{1}\equiv0$.
In view of $\partial_{x_{i}}S\equiv0$ and Proposition \ref{prop:Assumptio for coef},
we have
\begin{align*}
I_{2}= & -\frac{2}{\epsilon}\int_{0}^{t}\langle\Phi_{n}^{\prime}(u_{n,\epsilon})\partial_{x_{i}}(u_{n,\epsilon}-S),\partial_{x_{i}}(u_{n,\epsilon}-S)\mathbf{1}_{\left\{ u_{n,\epsilon}\geq S\right\} }\rangle_{L_{2}(\mathbb{T}^{d})}ds\\
\leq & -\frac{8}{n^{2}\epsilon}\int_{0}^{t}\left\Vert \mathbf{1}_{\left\{ u_{n,\epsilon}\geq S\right\} }\partial_{x_{i}}(u_{n,\epsilon}-S)\right\Vert _{L_{2}(\mathbb{T}^{d})}^{2}ds.
\end{align*}

We have the following estimate for $I_{3}$
\begin{align*}
 & I_{3}+\frac{2}{\epsilon^{2}}\int_{0}^{t}\left\Vert (u_{n,\epsilon}-S)^{+}\right\Vert _{L_{2}(\mathbb{T}^{d})}^{2}ds\\
 & =\frac{2}{\epsilon}\int_{0}^{t}\langle F(s,\cdot,u_{n,\epsilon})-h_{S},(u_{n,\epsilon}-S)^{+}\rangle_{L_{2}(\mathbb{T}^{d})}ds\\
 & \leq\int_{0}^{t}\left\Vert F(s,\cdot,u_{n,\epsilon})-h_{S}\right\Vert _{L_{2}(\mathbb{T}^{d})}^{2}ds+\frac{1}{\epsilon^{2}}\int_{0}^{t}\left\Vert (u_{n,\epsilon}-S)^{+}\right\Vert _{L_{2}(\mathbb{T}^{d})}^{2}ds\\
 & \leq C+C\int_{0}^{t}\left\Vert u_{n,\epsilon}\right\Vert _{L_{2}(\mathbb{T}^{d})}^{2}ds+\frac{1}{\epsilon^{2}}\int_{0}^{t}\left\Vert (u_{n,\epsilon}-S)^{+}\right\Vert _{L_{2}(\mathbb{T}^{d})}^{2}ds.
\end{align*}

Now, we estimate $I_{4}$. Using Burkholder-Davis-Gundy inequality
and Assumption \ref{assu:assumption for sigma}, we have
\begin{align*}
 & \mathbb{E}\left[\ensuremath{\sup_{\tau\in[0,t]}}I_{4}\right]\\
 & \leq\frac{C}{\epsilon}\mathbb{E}\left[\left(\int_{0}^{t}\sum_{k=1}^{\infty}\langle\sigma^{k}(u_{n,\epsilon})-\sigma^{k}(S),(u_{n,\epsilon}-S)^{+}\rangle_{L_{2}(\mathbb{T}^{d})}^{2}ds\right)^{\frac{1}{2}}\right]\\
 & \leq\frac{C}{\epsilon}\mathbb{E}\Bigg[\Bigg(\int_{0}^{t}\sum_{k=1}^{\infty}\Big(\int_{\mathbb{T}^{d}}|\sigma^{k}(u_{n,\epsilon})-\sigma^{k}(S)|^{2}\mathbf{1}_{\left\{ u_{n,\epsilon}\geq S\right\} }dx\Big)\\
 & \quad\times\Big(\int_{\mathbb{T}^{d}}|(u_{n,\epsilon}-S)^{+}|^{2}dx\Big)ds\Bigg)^{\frac{1}{2}}\Bigg]\\
 & \leq\frac{C}{\epsilon}\mathbb{E}\Bigg[\Bigg(\int_{0}^{t}\Big(\int_{\mathbb{T}^{d}}(x^{2}+x^{1+2\kappa})\big|_{x=(u_{n,\epsilon}-S)^{+}}dx\Big)\cdot\Big(\int_{\mathbb{T}^{d}}|(u_{n,\epsilon}-S)^{+}|^{2}dx\Big)ds\Bigg)^{\frac{1}{2}}\Bigg].
\end{align*}
Using H\"older's inequality, we have
\begin{align*}
 & \mathbb{E}\left[\ensuremath{\sup_{\tau\in[0,t]}}I_{4}\right]\\
 & \leq\frac{C}{\epsilon}\mathbb{E}\Bigg[\Bigg(\Big(\text{\ensuremath{\sup_{\tau\in[0,t]}}}\int_{\mathbb{T}^{d}}|(u_{n,\epsilon}-S)^{+}(\tau)|^{2}dx\Big)\cdot\int_{0}^{t}\int_{\mathbb{T}^{d}}(x^{2}+x)\big|_{x=(u_{n,\epsilon}-S)^{+}}dxds\Bigg)^{\frac{1}{2}}\Bigg]\\
 & \leq\frac{1}{4\epsilon}\mathbb{E}\left[\text{\ensuremath{\sup_{\tau\in[0,t]}}}\int_{\mathbb{T}^{d}}|(u_{n,\epsilon}-S)^{+}(\tau)|^{2}dx\right]+\frac{C}{\epsilon}\left\Vert (u_{n,\epsilon}-S)^{+}\right\Vert _{L_{2}(\Omega_{T}\times\mathbb{T}^{d})}^{2}\\
 & \quad+\frac{C}{\epsilon}\left\Vert (u_{n,\epsilon}-S)^{+}\right\Vert _{L_{1}(\Omega_{T}\times\mathbb{T}^{d})}.
\end{align*}

In the same way, using Assumption \ref{assu:assumption for sigma},
we have
\begin{align*}
I_{5} & \leq\frac{C}{\epsilon}\Bigg(\int_{0}^{t}\left\Vert (u_{n,\epsilon}-S)^{+}\right\Vert _{L_{2}(\mathbb{T}^{d})}^{2}ds+\int_{0}^{t}\int_{\mathbb{T}^{d}}(u_{n,\epsilon}-S)^{+}dxds\Bigg).
\end{align*}

Combining the preceding five estimates, we have
\begin{align*}
 & \frac{3}{4\epsilon}\mathbb{E}\left[\text{\ensuremath{\sup_{t\in[0,T]}}}\int_{\mathbb{T}^{d}}\left|(u_{n,\epsilon}-S)^{+}(t)\right|^{2}dx\right]+\frac{1}{\epsilon^{2}}\left\Vert (u_{n,\epsilon}-S)^{+}\right\Vert _{L_{2}(\Omega_{T}\times\mathbb{T}^{d})}^{2}\\
 & \leq C+C\left\Vert u_{n,\epsilon}\right\Vert _{L_{2}(\Omega_{T}\times\mathbb{T}^{d})}^{2}+\frac{C}{\epsilon}\left\Vert (u_{n,\epsilon}-S)^{+}\right\Vert _{L_{2}(\Omega_{T}\times\mathbb{T}^{d})}^{2}\\
 & \quad+C\left\Vert \frac{1}{\epsilon}(u_{n,\epsilon}-S)^{+}\right\Vert _{L_{1}(\Omega_{T}\times\mathbb{T}^{d})}
\end{align*}
for a constant $C$ independent of $n$ and $\epsilon$. Using Theorems
\ref{thm:esimate for u_n,epsilon} and \ref{thm:estimate for L1},
we obtain the desired inequality.
\end{proof}

\section{$(\star)$-property}

We introduce the $(\star)$-property to give an estimate to stochastic
integral, which is a key step in the proof of $L_{1}^{+}$ estimate
between two entropy solutions. This method is also used in \cite{dareiotis2019entropy,dareiotis2020nonlinear}.

For simplicity, we denote the integral to time as $\int_{t}\cdot:=\int_{0}^{T}\cdot dt$
and $\int_{s}\cdot:=\int_{0}^{T}\cdot ds$, and denote the integral
to space as $\int_{x}\cdot:=\int_{\mathbb{T}^{d}}\cdot dx$ and $\int_{y}\cdot:=\int_{\mathbb{T}^{d}}\cdot dy$.
Let $g\in C^{\infty}(\mathbb{T}^{d}\times\mathbb{T}^{d})$ and $\varphi\in C_{c}^{\infty}((0,T))$.
For all $\theta>0$, we introduce
\[
\phi_{\theta}(t,x,s,y):=g(x,y)\rho_{\theta}(t-s)\varphi(\frac{t+s}{2}),\quad(t,x,s,y)\in Q_{T}\times Q_{T}.
\]
For all $\tilde{u}\in L_{m+1}(\Omega_{T};L_{m+1}(\mathbb{T}^{d}))$,
$h\in C^{\infty}(\mathbb{R})$ and $h^{\prime}\in C_{c}^{\infty}(\mathbb{R})$,
we further define
\[
H_{\theta}(t,x,a):=\sum_{k=1}^{\infty}\int_{0}^{T}\int_{y}h(\tilde{u}(s,y)-a)\sigma^{k}(y,\tilde{u}(s,y))\phi_{\theta}(t,x,s,y)dW_{s}^{k},\quad\forall a\in\mathbb{R}
\]
and 
\[
\mathcal{B}(u,\tilde{u},\theta):=-\sum_{k=1}^{\infty}\mathbb{E}\int_{t,x,s,y}\phi_{\theta}(t,x,s,y)\sigma^{k}(x,u(t,x))\sigma^{k}(y,\tilde{u}(s,y))h^{\prime}(\tilde{u}(s,y)-u(t,x)).
\]
It is easy to see that function $H_{\theta}$ is smooth in $(t,x,a)$.
Set $\mu:=\frac{3m+5}{4(m+1)}$. We have $\frac{m+3}{2(m+1)}<\mu<1$.
\begin{defn}
\label{def:star property}A function $u\in L_{m+1}(\Omega_{T}\times\mathbb{T}^{d})$
is said to have the $(\star)$-property if for all $(g,\varphi,\tilde{u},h,h^{\prime})\in C^{\infty}(\mathbb{T}^{d}\times\mathbb{T}^{d})\times C_{c}^{\infty}((0,T))\times L_{m+1}(\Omega_{T};L_{m+1}(\mathbb{T}^{d}))\times C^{\infty}(\mathbb{R})\times C_{c}^{\infty}(\mathbb{R})$,
and for all sufficiently small $\theta>0$, we have $H_{\theta}(\cdot,\cdot,u)\in L_{1}(\Omega_{T}\times\mathbb{T}^{d})$
and
\[
\mathbb{E}\int_{t,x}H_{\theta}(t,x,u(t,x))\leq C\theta^{1-\mu}+\mathcal{B}(u,\tilde{u},\theta)
\]
for a constant $C$ independent of $\theta$.
\end{defn}

\begin{rem}
\label{rem:simplify for F}Notice that $\varphi$ is supported in
$(0,T)$ and $\rho_{\theta}(t-\cdot)$ is supported in $[t-\theta,t]$.
For sufficiently small $\theta>0$, we have 
\[
H_{\theta}(t,x,a)=\sum_{k=1}^{\infty}\mathbf{1}_{t>\theta}\int_{t-\theta}^{t}\int_{y}h(\tilde{u}(s,y)-a)\sigma^{k}(y,\tilde{u}(s,y))\phi_{\theta}(t,x,s,y)dW_{s}^{k}.
\]
\end{rem}

The following three lemmas are introduced from \cite[Section 3]{dareiotis2019entropy}.
They are essential to the proofs to the $(\star)$-property of solution
$u$ and the $L_{1}^{+}$ estimate between two entropy solutions.
\begin{lem}
\label{lem:partial H}For all $\lambda\in(\frac{m+3}{2(m+1)},1)$,
$\bar{k}\in\mathbb{N}$ and sufficiently small $\theta\in(0,1)$,
we have 
\[
\mathbb{E}\left\Vert \partial_{a}H_{\theta}\right\Vert _{L_{\infty}([0,T];W_{m+1}^{\bar{k}}(\mathbb{T}^{d}\times\mathbb{R}))}^{m+1}\leq C\theta^{-\lambda(m+1)}\mathcal{N}_{m}(\tilde{u}),
\]
where
\[
\mathcal{N}_{m}(\tilde{u}):=\mathbb{E}\int_{0}^{T}\left(1+\left\Vert \tilde{u}(t)\right\Vert _{L_{\frac{m+1}{2}}(\mathbb{T}^{d})}^{m+1}+\left\Vert \tilde{u}(t)\right\Vert _{L_{2}(\mathbb{T}^{d})}^{m+1}\right)dt,
\]
and $C$ is a constant depending only on $N_{0}$, $N_{1}$, $\bar{k}$,
$d$, $T$, $\lambda$, $m$, and the functions $h$, $\varrho$,
$\varphi$, but not on $\theta$. In particular, we have
\[
\mathbb{E}\left\Vert \partial_{a}H_{\theta}\right\Vert _{L_{\infty}([0,T];W_{m+1}^{\bar{k}}(\mathbb{T}^{d}\times\mathbb{R}))}^{m+1}\leq C\theta^{-\lambda(m+1)}\left(1+\left\Vert \tilde{u}\right\Vert _{L_{m+1}(Q_{T})}^{m+1}\right).
\]
\end{lem}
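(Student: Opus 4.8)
The plan is to combine a pointwise-in-$t$ stochastic integral estimate with a fractional Sobolev embedding in time. I would first write $\partial_a H_\theta$ explicitly by differentiating $h$ into $-h'$ under the stochastic integral, and invoke Remark \ref{rem:simplify for F} to restrict the $s$-integration to $[t-\theta,t]$ for small $\theta$. Since the derivatives in $x$ and $a$ fall only on the smooth factors $g$ and $h'$ — the former bounded with all derivatives, the latter smooth and compactly supported so that each $\partial_a^j h'$ is again smooth and compactly supported — the $W^{\bar k}_{m+1}(\mathbb{T}^d\times\mathbb{R})$-norm of $\partial_a H_\theta(t)$ is controlled by finitely many expressions of the same form with $h'$ replaced by derivatives of $h$. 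Hence it suffices to treat $\bar k=0$, the constant absorbing the dependence on $g$, $h$ and $\bar k$; write $X:=W^{\bar k}_{m+1}(\mathbb{T}^d\times\mathbb{R})$.

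Second, I would establish the pointwise-in-$t$ bound $\mathbb{E}\|\partial_a H_\theta(t)\|_{L_{m+1}(\mathbb{T}^d\times\mathbb{R})}^{m+1}\leq C\theta^{-(m+3)/2}\,\mathbb{E}\int_{t-\theta}^t\big(1+\|\tilde u(s)\|_{L_{(m+1)/2}}^{m+1}+\|\tilde u(s)\|_{L_2}^{m+1}\big)\,ds$. This follows by applying the Burkholder--Davis--Gundy inequality pointwise in $(x,a)$ and then integrating, by Hölder's inequality in $s$ over the length-$\theta$ interval $[t-\theta,t]$ (producing a gain $\theta^{(m-1)/2}$ against the loss $\theta^{-2}$ from $\rho_\theta(t-s)^2\leq C\theta^{-2}$), and by the linear growth $\sum_k|\sigma^k|^2\leq C(1+|\tilde u|^2)$; the integration over $a\in\mathbb{R}$ is handled via the compact support of $h'$ together with an occupation-measure / Young-convolution argument, which is precisely what yields the norms $\|\tilde u\|_{L_{(m+1)/2}}$ and $\|\tilde u\|_{L_2}$ rather than something worse. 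Note that $-(m+3)/2=-\tfrac{m+3}{2(m+1)}(m+1)$, so this already hits the borderline exponent.

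Third, to pass from the pointwise bound to the supremum I would use the embedding $W^{s,m+1}(0,T;X)\hookrightarrow C([0,T];X)$, valid for $s\in(\tfrac1{m+1},1)$, so that $\mathbb{E}\sup_t\|\partial_a H_\theta(t)\|_X^{m+1}\leq C\,\mathbb{E}\int_0^T\|\partial_a H_\theta(t)\|_X^{m+1}dt + C\,\mathbb{E}\int_0^T\!\!\int_0^T\frac{\|\partial_a H_\theta(t)-\partial_a H_\theta(t')\|_X^{m+1}}{|t-t'|^{1+s(m+1)}}\,dt\,dt'$. The first term is $\leq C\theta^{-(m+1)/2}\mathcal{N}_m(\tilde u)$ by step two (the extra $\theta$ coming from $\int_0^T\!\int_{t-\theta}^t$). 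For the second term I would estimate the increments by distinguishing $|t-t'|\geq\theta$ (disjoint $s$-supports, so the increment is a single stochastic integral over two length-$\theta$ windows, bounded as in step two) and $|t-t'|<\theta$ (split into the two "endpoint" pieces of length $|t-t'|$ and the overlap piece, on which $|\rho_\theta(t-s)\varphi(\tfrac{t+s}2)-\rho_\theta(t'-s)\varphi(\tfrac{t'+s}2)|\leq C\theta^{-2}|t-t'|$; again BDG plus Hölder). Carrying out the $t,t'$-integrations, every piece contributes $C\theta^{-(m+3)/2-(s(m+1)-1)}\mathcal{N}_m(\tilde u)$, so with $\eta:=s(m+1)-1$ ranging over $(0,\tfrac{m-1}2)$ — the upper endpoint being exactly the convergence threshold of the endpoint-piece integral — one obtains the claim for any $\lambda=\tfrac{(m+3)/2+\eta}{m+1}\in(\tfrac{m+3}{2(m+1)},1)$. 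The final assertion is immediate since $\|\tilde u(t)\|_{L_{(m+1)/2}},\|\tilde u(t)\|_{L_2}\leq C\|\tilde u(t)\|_{L_{m+1}}$ on the bounded domain $\mathbb{T}^d$.

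I expect the main obstacle to be the bookkeeping of powers of $\theta$ in the third step: one must check that, for each near- and far-regime increment piece, the gain $\theta^{(m-1)/2}$ from Hölder over a short time interval, the loss $\theta^{-2}$ (or $\theta^{-4}|t-t'|^2\leq\theta^{-3}|t-t'|$ for the kernel-difference piece) from the mollifier, the factor $|t-t'|^{-1-s(m+1)}$ from the Gagliardo kernel, and the extra factor $\theta$ from the overlap of time windows combine to exactly $\theta^{-(m+3)/2-\eta}$, and that the only constraint produced, $\eta<(m-1)/2$, matches the stated range of $\lambda$. A secondary technical point is the $a$-integration in step two: a naive Cauchy--Schwarz in $y$ loses too much, and one must exploit the compact support of $h'$ through occupation measures and Young's inequality to land exactly on $\|\tilde u\|_{L_{(m+1)/2}}$ and $\|\tilde u\|_{L_2}$.
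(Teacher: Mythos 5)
Your argument is correct and follows essentially the same route as the source of this lemma --- note that the paper gives no proof here but quotes it from \cite[Section 3]{dareiotis2019entropy}, where the estimate is likewise obtained by differentiating under the stochastic integral, a Burkholder--Davis--Gundy bound combined with H\"older's inequality on the length-$\theta$ window (the borderline cost $\theta^{-\frac{m+3}{2}}$ per $(m+1)$-th moment), and a fractional Sobolev (Garsia-type) embedding in time to reach the supremum. Your bookkeeping also checks out: with $\eta=s(m+1)-1$ the admissible range $\eta\in(0,\tfrac{m-1}{2})$ coming from the embedding and the endpoint pieces corresponds exactly to $\lambda\in\big(\tfrac{m+3}{2(m+1)},1\big)$, and the Young/occupation-measure step in fact produces $1+\|\tilde u(s)\|_{L_1(\mathbb{T}^d)}$, which is dominated by the integrand of $\mathcal{N}_m(\tilde u)$, so the stated bound follows a fortiori.
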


\begin{lem}
\label{lem:appro for star}(i) Let $\{u_{n}\}_{n\in\mathbb{N}}$ be
a sequence bounded in $L_{m+1}(\Omega\times Q_{T})$, satisfying the
$(\star)$-property uniformly in $n$, which means the constant $C$
in Definition \ref{def:star property} is independent of $n$. If
$u_{n}$ converges to a function $u$ almost surely on $\Omega\times Q_{T}$,
then $u$ has the $(\star)$-property.

(ii) Let $u\in L_{2}(\Omega\times Q_{T})$. Then, for sufficiently
small $\theta\in(0,1)$, we have 
\[
\mathbb{E}\int_{t,x}H_{\theta}(t,x,u(t,x))=\lim_{\lambda\rightarrow0}\mathbb{E}\int_{t,x,a}H_{\theta}(t,x,a)\rho_{\lambda}(u(t,x)-a).
\]
\end{lem}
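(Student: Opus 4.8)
The plan is to read both statements as passages to the limit anchored to the regularity estimate of Lemma~\ref{lem:partial H}. The common device is that, fixing $\lambda\in(\tfrac{m+3}{2(m+1)},1)$ and taking $\bar k$ large, the Sobolev embedding $W_{m+1}^{\bar k}(\mathbb{T}^d\times\mathbb{R})\hookrightarrow C_b(\mathbb{T}^d\times\mathbb{R})$ turns Lemma~\ref{lem:partial H} into
\[
\mathbb{E}\,\|\partial_a H_\theta\|_{L_\infty(Q_T\times\mathbb{R})}^{m+1}\leq C\theta^{-\lambda(m+1)}\big(1+\|\tilde u\|_{L_{m+1}(Q_T)}^{m+1}\big)<\infty,
\]
so that $M_\theta(t,x):=\sup_{b\in\mathbb{R}}|\partial_a H_\theta(t,x,b)|$ lies in $L_{m+1}(\Omega_T\times\mathbb{T}^d)$; since $a\mapsto H_\theta(t,x,a)$ is smooth, $|H_\theta(t,x,a)-H_\theta(t,x,b)|\leq|a-b|\,M_\theta(t,x)$. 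A direct Burkholder--Davis--Gundy estimate, using that $h$ is bounded, $\sigma$ has linear growth, $\phi_\theta$ is bounded and $\tilde u\in L_{m+1}(Q_T)$, also gives $H_\theta(\cdot,\cdot,0)\in L_{m+1}(\Omega_T\times\mathbb{T}^d)$. Consequently, for any $v\in L_2(\Omega_T\times\mathbb{T}^d)$, H\"older's inequality ($M_\theta\in L_{m+1}\subset L_2$) yields $H_\theta(\cdot,\cdot,v)\in L_1(\Omega_T\times\mathbb{T}^d)$; in particular both sides of the identity in (ii) are finite.

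For part (ii), substituting $r=u(t,x)-a$ gives $\int_a H_\theta(t,x,a)\rho_\lambda(u(t,x)-a)\,da=\int_{\mathbb{R}}H_\theta(t,x,u(t,x)-r)\rho_\lambda(r)\,dr$, which converges to $H_\theta(t,x,u(t,x))$ for a.e.\ $(\omega,t,x)$ as $\lambda\to0$, since $\rho_\lambda$ is a mollifier concentrating at $0$ and $H_\theta$ is continuous in its last argument. Because $\rho_\lambda(u-\cdot)$ is supported where $|a-u(t,x)|<\lambda$, the mean value bound gives
\[
\Big|\int_a H_\theta(t,x,a)\rho_\lambda(u(t,x)-a)\,da\Big|\leq|H_\theta(t,x,u(t,x))|+\lambda\,M_\theta(t,x),
\]
whose right-hand side lies in $L_1(\Omega_T\times\mathbb{T}^d)$ uniformly in $\lambda\in(0,1)$ by the previous paragraph; dominated convergence then yields the identity.

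For part (i), fix $(g,\varphi,\tilde u,h,h')$ and a small $\theta>0$ (so that $u\in L_{m+1}(\Omega_T\times\mathbb{T}^d)$ by Fatou). For every $n$ the $(\star)$-property of $u_n$ reads $\mathbb{E}\int_{t,x}H_\theta(t,x,u_n(t,x))\leq C\theta^{1-\mu}+\mathcal{B}(u_n,\tilde u,\theta)$ with $C$ independent of $n$, so it remains to pass to the limit on each side. On the left, $u_n\to u$ a.s.\ and continuity of $H_\theta$ in its last argument give $H_\theta(\cdot,\cdot,u_n)\to H_\theta(\cdot,\cdot,u)$ a.s.; by the mean value bound $|H_\theta(\cdot,\cdot,u_n)|\leq|H_\theta(\cdot,\cdot,0)|+|u_n|\,M_\theta$, so H\"older's inequality shows $\{H_\theta(\cdot,\cdot,u_n)\}_n$ is bounded in $L_{(m+1)/2}(\Omega_T\times\mathbb{T}^d)$ (using boundedness of $\{u_n\}$ in $L_{m+1}$ and $M_\theta\in L_{m+1}$); since $(m+1)/2>1$ this family is uniformly integrable and $\mathbb{E}\int_{t,x}H_\theta(t,x,u_n)\to\mathbb{E}\int_{t,x}H_\theta(t,x,u)$. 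On the right, the integrand $\sum_k\sigma^k(x,u_n(t,x))\sigma^k(y,\tilde u(s,y))h'(\tilde u(s,y)-u_n(t,x))$ converges a.s.\ to the corresponding expression with $u$ by continuity of $\sigma$ and $h'$, and is dominated by $C_\theta\|h'\|_\infty(1+|u_n(t,x)|)(1+|\tilde u(s,y)|)$, which is bounded, hence uniformly integrable, in $L_{(m+1)/2}(\Omega_T\times\mathbb{T}^d\times\mathbb{T}^d)$; therefore $\mathcal{B}(u_n,\tilde u,\theta)\to\mathcal{B}(u,\tilde u,\theta)$. Passing to the limit gives $\mathbb{E}\int_{t,x}H_\theta(t,x,u)\leq C\theta^{1-\mu}+\mathcal{B}(u,\tilde u,\theta)$, i.e.\ $u$ has the $(\star)$-property.

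The main obstacle is controlling the two ``composition'' terms $\mathbb{E}\int_{t,x}H_\theta(t,x,u_n)$ and $\int_a H_\theta(t,x,a)\rho_\lambda(u-a)\,da$, where $H_\theta$ is evaluated at the random points $u_n(t,x)$ or $u(t,x)$: pointwise-in-$a$ estimates do not suffice, and one must trade the $a$-dependence for the sup-in-$a$ derivative bound $M_\theta\in L_{m+1}$ furnished by Lemma~\ref{lem:partial H} and the Sobolev embedding, then balance it by H\"older's inequality against the $L_{m+1}$-integrability of $u_n$ (respectively, against $u\in L_2$ in (ii)). Once $M_\theta\in L_{m+1}$ is in hand, the remaining steps are routine uses of dominated convergence and uniform integrability.
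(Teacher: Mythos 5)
Your proof is correct. Note that the paper does not prove this lemma itself—it imports Lemmas \ref{lem:partial H}--\ref{lem:change for u} from Section 3 of \cite{dareiotis2019entropy}—and your argument (Sobolev embedding applied to the bound of Lemma \ref{lem:partial H} to obtain a sup-in-$a$ Lipschitz bound $M_\theta\in L_{m+1}$, then Vitali/dominated convergence on both sides of the $(\star)$-inequality and on the mollified integral) is essentially the standard argument underlying the cited result, so there is nothing to flag beyond trivial notational slips (the domination for $\mathcal{B}$ lives on $\Omega\times Q_T\times Q_T$, and $\lambda$ is used both as the exponent of Lemma \ref{lem:partial H} and as the mollification parameter).
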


\begin{lem}
\label{lem:change for u}Let Assumption \ref{assu:assumption for phi}
holds and $u\in L_{1}(\Omega\times Q_{T})$. For some $\varepsilon\in(0,1)$,
let $\varrho:\mathbb{R}^{d}\mapsto\mathbb{R}$ be a non-negative function
integrating to one and supported on a ball of radius $\varepsilon$.
Then, we have
\[
\mathbb{E}\int_{t,x,y}|u(t,x)-u(t,y)|\varrho(x-y)\leq C\varepsilon^{\frac{2}{m+1}}(1+\mathbb{E}\left\Vert \nabla[\zeta](u)\right\Vert _{L_{1}(Q_{T})})
\]
for a constant $C$ independent of $d$, $K$ and $T$.
\end{lem}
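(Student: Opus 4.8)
The plan is to reduce the estimate, via Assumption~\ref{assu:assumption for phi}, to a standard Sobolev-type inequality for the function $v:=\llbracket\zeta\rrbracket(u)$. We may assume $\mathbb{E}\|\nabla\llbracket\zeta\rrbracket(u)\|_{L_1(Q_T)}<\infty$, for otherwise there is nothing to prove; then $v(\omega,t,\cdot)\in W^{1,1}(\mathbb{T}^d)$ for $\mathbb{P}\otimes dt$-a.e.\ $(\omega,t)$, with $\nabla v=\nabla\llbracket\zeta\rrbracket(u)$. The crucial observation is that, inspecting separately the two regimes $|r|\vee|s|\ge 1$ and $|r|\vee|s|<1$ in Assumption~\ref{assu:assumption for phi} (and recalling $K\ge1$), one obtains in both cases
\[
|r-s|\le K\,|\llbracket\zeta\rrbracket(r)-\llbracket\zeta\rrbracket(s)|+\big(K\,|\llbracket\zeta\rrbracket(r)-\llbracket\zeta\rrbracket(s)|\big)^{\frac{2}{m+1}},\qquad r,s\in\mathbb{R}.
\]
Applying this pointwise with $r=u(t,x)$ and $s=u(t,y)$, the left-hand side of the lemma is controlled by a multiple of $\mathbb{E}\int_{t,x,y}|v(t,x)-v(t,y)|\varrho(x-y)$ plus a multiple of $\mathbb{E}\int_{t,x,y}|v(t,x)-v(t,y)|^{\frac{2}{m+1}}\varrho(x-y)$, so it remains to estimate these two integrals.

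For the linear one I would use the usual argument: since $\varrho$ is supported in a ball of radius $\varepsilon<1$, for a.e.\ pair $x,y$ the minimal segment joining them on $\mathbb{T}^d$ gives $v(t,x)-v(t,y)=\int_0^1\nabla v(t,y+\tau(x-y))\cdot(x-y)\,d\tau$ (absolute continuity of $W^{1,1}$ functions on a.e.\ line); taking absolute values, using $|x-y|\le\varepsilon$ on $\mathrm{supp}\,\varrho$, and then Fubini together with translation invariance of Lebesgue measure on $\mathbb{T}^d$ yields
\[
\int_{x,y}|v(t,x)-v(t,y)|\,\varrho(x-y)\le\varepsilon\,\|\nabla v(t)\|_{L_1(\mathbb{T}^d)}.
\]
For the sublinear one, since $\tfrac{2}{m+1}\in(0,1)$ (this is where $m>1$ is used), I would apply H\"older's inequality on the finite measure $\varrho(x-y)\,dx\,dy$ on $\mathbb{T}^d\times\mathbb{T}^d$, whose total mass is $|\mathbb{T}^d|$, giving $\int_{x,y}|v(t,x)-v(t,y)|^{\frac{2}{m+1}}\varrho(x-y)\le|\mathbb{T}^d|^{\frac{m-1}{m+1}}\big(\int_{x,y}|v(t,x)-v(t,y)|\varrho(x-y)\big)^{\frac{2}{m+1}}$, hence at most $C\big(\varepsilon\,\|\nabla v(t)\|_{L_1(\mathbb{T}^d)}\big)^{\frac{2}{m+1}}$ by the previous display.

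Finally I would integrate in $t$ and take expectations. The linear contribution is $\le C\varepsilon\,\mathbb{E}\|\nabla v\|_{L_1(Q_T)}\le C\varepsilon^{\frac{2}{m+1}}\mathbb{E}\|\nabla v\|_{L_1(Q_T)}$ since $\varepsilon<1$, while the sublinear contribution is $\le C\varepsilon^{\frac{2}{m+1}}\mathbb{E}\int_0^T\|\nabla v(t)\|_{L_1(\mathbb{T}^d)}^{\frac{2}{m+1}}dt\le C\varepsilon^{\frac{2}{m+1}}\big(1+\mathbb{E}\|\nabla v\|_{L_1(Q_T)}\big)$ by the elementary bound $a^{\frac{2}{m+1}}\le 1+a$ together with $\int_0^T 1\,dt=T$. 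Adding the two and recalling $v=\llbracket\zeta\rrbracket(u)$ gives the assertion, with the constant depending only on $m$ (up to trivial factors $K\ge1$ and a factor involving $T$, and none on $d$ since $|\mathbb{T}^d|$ may be normalized to $1$). The only mildly delicate point is the sublinear-power bookkeeping: it is precisely $\tfrac{2}{m+1}<1$ that makes the H\"older step point in the right direction and lets the $\varepsilon$-power survive after integrating the pointwise bound from Assumption~\ref{assu:assumption for phi}; the remaining ingredients — the fundamental-theorem-of-calculus representation for $v$ on the torus, Fubini, and translation invariance — are routine.
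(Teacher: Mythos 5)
Your proof is correct, and it is essentially the standard argument behind this lemma, which the paper itself does not prove but imports from \cite[Section 3]{dareiotis2019entropy}: the two-regime pointwise bound $|r-s|\le K\left|\left\llbracket \zeta\right\rrbracket (r)-\left\llbracket \zeta\right\rrbracket (s)\right|+\big(K\left|\left\llbracket \zeta\right\rrbracket (r)-\left\llbracket \zeta\right\rrbracket (s)\right|\big)^{\frac{2}{m+1}}$ from Assumption \ref{assu:assumption for phi}, the translation estimate $\|v(\cdot+z)-v\|_{L_1(\mathbb{T}^d)}\le|z|\,\|\nabla v\|_{L_1(\mathbb{T}^d)}$ for $v=\left\llbracket \zeta\right\rrbracket (u)\in W^{1,1}$, and the H\"older/Jensen treatment of the sublinear power are exactly the ingredients used there. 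The only caveat is cosmetic: your constant (like the one in the cited source) depends on $K$, $T$ and $m$, so the phrase ``independent of $d$, $K$ and $T$'' in the statement should be read as ``depending only on'' these quantities.
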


Define $\varrho_{\varsigma}:=\rho_{\varsigma}^{\otimes d}$ for all
$\varsigma>0$. Now we prove that the solution $u_{n,\epsilon}$ has
the uniform $(\star)$-property.
\begin{thm}
\label{thm:star property}Let Assumptions \ref{assu:assumption for phi}-\ref{assu:Assuption for barrier}
hold. For any $n\in\mathbb{N}$ and $\epsilon>0$, let $u_{n,\epsilon}$
be the $L_{2}$-solution of $\Pi(\Phi_{n},F-G_{\epsilon}(\cdot,S),\xi_{n})$.
Then, $u_{n,\epsilon}$ has the $(\star)$-property. If in addition
$\left\Vert \xi\right\Vert _{L_{2}(\mathbb{T}^{d})}$ has moments
of order 4, then the constant $C$ in Definition \ref{def:star property}
is independent of $n$ and $\epsilon$.
\end{thm}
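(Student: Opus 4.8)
The plan is to apply It\^{o}'s formula to the smooth function $(t,x,a)\mapsto H_\theta(t,x,a)$ composed with the $L_2$-solution $u_{n,\epsilon}$, exactly in the spirit of \cite[Section 3]{dareiotis2019entropy} and \cite{dareiotis2020nonlinear}, and then to control the resulting terms using the a priori estimates already obtained in Section \ref{sec:Approximation}. Since $u_{n,\epsilon}$ is an $L_2$-solution and $\zeta_n$, $\Phi_n$ are smooth with $\zeta_n\geq 2/n$, the equation $\Pi(\Phi_n,F-G_\epsilon(\cdot,S),\xi_n)$ holds in the strong sense, so It\^{o}'s formula is directly applicable (no entropy regularization is needed at this stage, in contrast to the limiting solution). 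The key point is that $H_\theta(t,x,a)$ is a stochastic integral over the time interval $[t-\theta,t]$ (Remark \ref{rem:simplify for F}), so it is $\mathcal{F}_t$-measurable and, crucially, the cross-variation of $dH_\theta(t,x,a)$ (in its explicit time variable) with $du_{n,\epsilon}(t,x)$ produces precisely the term $\mathcal{B}(u_{n,\epsilon},\tilde u,\theta)$ after taking expectations, since the Brownian motions driving $u_{n,\epsilon}$ and $\tilde u$ are the same $\{W^k\}$.

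The steps, in order, would be: (1) freeze $a$ and apply It\^{o} to $H_\theta(t,x,u_{n,\epsilon}(t,x))$, obtaining a drift contribution from $\partial_t H_\theta$, a contribution from the spatial Laplacian/$F$/penalty drift of $u_{n,\epsilon}$ paired against $\partial_a H_\theta$, a second-order term $\tfrac12\partial_{aa}H_\theta\,|\sigma(u_{n,\epsilon})|^2$, the cross term $\sum_k \partial_a H_\theta\cdot(\text{coefficient of }dW^k\text{ in }dH_\theta)\cdot\sigma^k(u_{n,\epsilon})$, plus a true martingale that vanishes in expectation; (2) integrate in $(t,x)$ and take expectations; (3) identify the cross term with $\mathcal{B}(u_{n,\epsilon},\tilde u,\theta)$ plus an error; (4) bound every remaining term by $C\theta^{1-\mu}$. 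For step (4), the workhorse is Lemma \ref{lem:partial H}, which gives $\mathbb{E}\|\partial_a H_\theta\|_{L_\infty([0,T];W^{\bar k}_{m+1}(\mathbb{T}^d\times\mathbb{R}))}^{m+1}\leq C\theta^{-\lambda(m+1)}(1+\|\tilde u\|_{L_{m+1}(Q_T)}^{m+1})$ for any $\lambda\in(\tfrac{m+3}{2(m+1)},1)$; choosing $\lambda$ slightly above $\tfrac{m+3}{2(m+1)}$ and pairing these norms against the uniform bounds on $\nabla\llbracket\zeta_n\rrbracket(u_{n,\epsilon})$, on $\|u_{n,\epsilon}\|_{L_{m+1}}$, and on the penalty term $G_\epsilon(u_{n,\epsilon},S)/\epsilon$ from Theorems \ref{thm:esimate for u_n,epsilon}, \ref{prioriestimateforphi}, \ref{thm:estimate for L1} and \ref{thm:estimate for L2 for G} yields the bound $C\theta^{1-\mu}$ with $\mu=\tfrac{3m+5}{4(m+1)}$. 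The Laplacian term requires an integration by parts moving one derivative onto $\partial_a\nabla_x H_\theta$ and then rewriting $\nabla_x u_{n,\epsilon}$ in terms of $\nabla\llbracket\zeta_n\rrbracket(u_{n,\epsilon})$ via $\zeta_n$, using Assumption \ref{assu:assumption for phi} and the smoothness from Proposition \ref{prop:Assumptio for coef}.

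The new feature compared to \cite{dareiotis2019entropy} is the penalty term $-G_\epsilon(u_{n,\epsilon},S)$ in the drift. It contributes $-\mathbb{E}\int_{t,x}\partial_a H_\theta(t,x,u_{n,\epsilon})\,G_\epsilon(u_{n,\epsilon},S)\,dt$, which by H\"older and Lemma \ref{lem:partial H} is bounded by $C\theta^{-\lambda}\big(\mathbb{E}\|G_\epsilon(u_{n,\epsilon},S)\|_{L_{(m+1)'}(Q_T)}\big)$ up to the $L_{m+1}$-moment factor; here one uses the uniform-in-$(n,\epsilon)$ bound on $G_\epsilon(u_{n,\epsilon},S)$ from Theorems \ref{thm:estimate for L1} and \ref{thm:estimate for L2 for G}. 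Since $\theta^{-\lambda}$ is absorbed into the gap between $\lambda$ and the larger exponent $\mu$ (recall $\tfrac{m+3}{2(m+1)}<\mu<1$, so $1-\mu$ is strictly smaller in magnitude than what one loses, but the product of two factors each close to $\theta^{-\lambda/2}$ still beats $\theta^{1-\mu}$ after using that $\mathcal{N}_m$-type norms of $\tilde u$ and of $u_{n,\epsilon}$ are finite), one closes the estimate; this bookkeeping of exponents is the delicate part. For the final assertion, every constant entering through Theorems \ref{thm:esimate for u_n,epsilon}--\ref{thm:estimate for L2 for G} is independent of $n$ and $\epsilon$ once $\mathbb{E}\|\xi\|_{L_2(\mathbb{T}^d)}^4<\infty$ (which, via the definition \eqref{defn for xi_n} of $\xi_n$, gives $\sup_n \mathbb{E}\|\xi_n\|_{L_2}^4<\infty$), so tracking constants through the above chain gives a $C$ independent of $n$ and $\epsilon$.

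The main obstacle I anticipate is step (3)–(4): correctly isolating $\mathcal{B}(u_{n,\epsilon},\tilde u,\theta)$ from the It\^{o} cross term — in particular checking that the ``extra'' piece of the cross variation (coming from the fact that $H_\theta$ depends on $t$ both through $\rho_\theta(t-s)$ and through the stochastic integral's upper limit) is of order $\theta^{1-\mu}$ and not larger — and then carrying out the exponent bookkeeping so that all the negative powers of $\theta$ generated by Lemma \ref{lem:partial H} are defeated by the smoothing of the mollifier $\rho_\theta$ in the Laplacian and second-order terms. This is exactly where the specific value $\mu=\tfrac{3m+5}{4(m+1)}$ is forced. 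The presence of the $x$-dependence in $\sigma^k(x,\cdot)$ and $F(t,x,\cdot)$ (via the H\"older bounds in Assumptions \ref{assu:assumption for sigma} and \ref{assu:inhomogeneous function}) only adds harmless lower-order terms controlled by $\kappa$, handled as in \cite{dareiotis2019entropy}.
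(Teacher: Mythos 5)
Your overall plan --- It\^o's formula plus Lemma \ref{lem:partial H} plus the a priori bounds of Section \ref{sec:Approximation}, with the only genuinely new contribution being the penalty term, controlled through Theorems \ref{thm:estimate for L1} and \ref{thm:estimate for L2 for G} --- is the same as the paper's, and your handling of that new term and of the fourth-moment assertion is essentially what the paper does: there the penalty contribution $N_2$ is bounded by $C\theta\,(\mathbb{E}\Vert\partial_a H_\theta\Vert_{L_\infty}^2)^{1/2}\,(\mathbb{E}\Vert\tfrac1\epsilon(u_{n,\epsilon}-S)^+\Vert_{L_2(Q_T)}^2)^{1/2}\le C\theta^{1-\mu}$, using Theorem \ref{thm:estimate for L2 for G} and choosing $\lambda\in(\tfrac{m+3}{2(m+1)},\mu)$, with uniformity in $(n,\epsilon)$ exactly when $\mathbb{E}\Vert\xi\Vert_{L_2(\mathbb{T}^d)}^4<\infty$.

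However, the core mechanism in your steps (1)--(3) has a genuine gap. First, an $L_2$-solution does not satisfy the equation ``in the strong sense'': $\Delta\Phi_n(u_{n,\epsilon})$ is only distributional ($u_{n,\epsilon}\in H^1$, $\nabla\Phi_n(u_{n,\epsilon})\in L_2$), so a pointwise-in-$x$ application of It\^o's formula requires the spatial mollification $u_{n,\epsilon}^{(\gamma)}:=\varrho_\gamma\ast u_{n,\epsilon}$ used in the paper, followed by $\gamma\to0$. Second, and more seriously, your identification of $\mathcal{B}$ as a quadratic cross-variation is wrong: since $\rho$ is supported in $(0,1)$, $\phi_\theta(t,x,s,y)$ vanishes at $s=t$, so $H_\theta(\cdot,x,a)$ is of finite variation in its explicit time variable (it equals $\int_0^t\bigl(\sum_k\int_0^r\partial_r(\cdots)\,dW_s^k\bigr)dr$) and carries no $dW_t$ part; hence its cross-variation with $du_{n,\epsilon}(t,x)$ is zero. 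If you run It\^o--Wentzell on $t\mapsto H_\theta(t,x,u_{n,\epsilon}(t,x))$ from $0$ to $t$, the correlation that should produce $\mathcal{B}$ is buried in $\mathbb{E}\int_0^t(\partial_tH_\theta)(s,x,u_{n,\epsilon}(s,x))\,ds$, a stochastic-integral field evaluated along the solution, which is just as hard as the original quantity; and if instead you try to apply It\^o in $s$ over $[t-\theta,t]$ to $s\mapsto H_\theta(t,x,u_{n,\epsilon}(s,x))$, the field $H_\theta(t,\cdot,\cdot)$ is not $\mathcal{F}_s$-adapted for $s<t$, so It\^o calculus does not apply. The paper's (and \cite[Lemma 5.2]{dareiotis2019entropy}'s) way around this is the doubling-of-variables device you omit: apply It\^o to the adapted scalar process $s\mapsto\rho_\lambda(u_{n,\epsilon}^{(\gamma)}(s,x)-a)$ on $[t-\theta,t]$ with $a$ a dummy parameter, pair with the $\mathcal{F}_t$-measurable weight $H_\theta(t,x,a)$ and integrate in $(t,x,a)$, use $\mathbb{E}\bigl[H_\theta(t,x,a)\rho_\lambda(u_{n,\epsilon}^{(\gamma)}(t-\theta,x)-a)\bigr]=0$, and obtain $\mathcal{B}$ as the expectation of the product of the two stochastic integrals over $[t-\theta,t]$ (the term $N^{(2)}_{\lambda,\gamma}$), computed by the It\^o covariance formula and then letting $\gamma\to0$ and $\lambda\to0$ via Lemma \ref{lem:appro for star}(ii). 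Without this device, the step you yourself flag as delicate --- isolating $\mathcal{B}$ --- does not go through as written; with it, the rest of your term-by-term estimates (including the penalty term) proceed as you describe.
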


\begin{proof}
Fixed $\theta>0$ small enough so that Remark \ref{rem:simplify for F}
holds. We now apply the approximation method in the proof of \cite[Lemma 5.2]{dareiotis2019entropy}.
For a function $f\in L_{2}(\mathbb{T}^{d})$ and $\gamma>0$, let
$f^{(\gamma)}:=\varrho_{\gamma}*f$ be the mollification. Then, the
function $u_{n,\epsilon}^{(\gamma)}$ satisfies (pointwise) the equation
\begin{align*}
du_{n,\epsilon}^{(\gamma)} & =\bigg[\Big(\Delta\Phi_{n}(u_{n,\epsilon})+F(t,\cdot,u_{n,\epsilon})-\frac{1}{\epsilon}(u_{n,\epsilon}-S)^{+}\Big)^{(\gamma)}\bigg]dt+\sum_{k=1}^{\infty}\big(\sigma^{k}(u_{n,\epsilon})\big)^{(\gamma)}dW_{t}^{k}.
\end{align*}
Applying It\^{o}'s formula, we have
\[
\int_{t,x,a}H_{\theta}(t,x,a)\left(\rho_{\lambda}(u_{n,\epsilon}^{(\gamma)}(t,x)-a)-\rho_{\lambda}(u_{n,\epsilon}^{(\gamma)}(t-\theta,x)-a)\right)=\sum_{l=1}^{4}N_{\lambda,\gamma}^{(l)},
\]
where
\[
N_{\lambda,\gamma}^{(1)}:=\int_{t,x,a}H_{\theta}(t,x,a)\int_{t-\theta}^{t}\rho_{\lambda}^{\prime}(u_{n,\epsilon}^{(\gamma)}(s,x)-a)\Delta\big(\Phi_{n}(u_{n,\epsilon})\big)^{(\gamma)}ds,
\]
\[
N_{\lambda,\gamma}^{(2)}:=\int_{t,x,a}H_{\theta}(t,x,a)\sum_{k=1}^{\infty}\int_{t-\theta}^{t}\rho_{\lambda}^{\prime}(u_{n,\epsilon}^{(\gamma)}(s,x)-a)\big(\sigma^{k}(u_{n,\epsilon})\big)^{(\gamma)}dW_{s}^{k},
\]
\[
N_{\lambda,\gamma}^{(3)}:=\int_{t,x,a}H_{\theta}(t,x,a)\frac{1}{2}\int_{t-\theta}^{t}\rho_{\lambda}^{\prime\prime}(u_{n,\epsilon}^{(\gamma)}(s,x)-a)\sum_{k=1}^{\infty}\left|\big(\sigma^{k}(u_{n,\epsilon})\big)^{(\gamma)}\right|^{2}ds,
\]
\[
N_{\lambda,\gamma}^{(4)}:=\int_{t,x,a}H_{\theta}(t,x,a)\int_{t-\theta}^{t}\rho_{\lambda}^{\prime}(u_{n,\epsilon}^{(\gamma)}(s,x)-a)\big(F(s,\cdot,u_{n,\epsilon})-\frac{1}{\epsilon}(u_{n,\epsilon}-S)^{+}\big)^{(\gamma)}ds.
\]
For $N_{\lambda,\gamma}^{(4)}$, using integration by parts formula
(in $a$), we have
\begin{align*}
 & \mathbb{E}\left|N_{\lambda,\gamma}^{(4)}\right|\\
 & \leq\mathbb{E}\left|\int_{t,x,a}\partial_{a}H_{\theta}(t,x,a)\int_{t-\theta}^{t}\rho_{\lambda}(u_{n,\epsilon}^{(\gamma)}(s,x)-a)\big(F(s,\cdot,u_{n,\epsilon})-\frac{1}{\epsilon}(u_{n,\epsilon}-S)^{+}\big)^{(\gamma)}ds\right|\\
 & \leq N_{1}+N_{2},
\end{align*}
where
\[
N_{1}:=\mathbb{E}\left|\int_{t,x,a}\partial_{a}H_{\theta}(t,x,a)\int_{t-\theta}^{t}\rho_{\lambda}(u_{n,\epsilon}^{(\gamma)}(s,x)-a)\big(F(s,\cdot,u_{n,\epsilon})\big)^{(\gamma)}ds\right|
\]
and
\[
N_{2}:=\mathbb{E}\left|\int_{t,x,a}\partial_{a}H_{\theta}(t,x,a)\int_{t-\theta}^{t}\rho_{\lambda}(u_{n,\epsilon}^{(\gamma)}(s,x)-a)\big(\frac{1}{\epsilon}(u_{n,\epsilon}-S)^{+}\big)^{(\gamma)}ds\right|.
\]
Since
\[
\int_{x}f^{(\gamma)}(x)=\int_{x}\int_{y}\varrho_{\gamma}(x-y)=\int_{y}f(y)\int_{x}\varrho_{\gamma}(x-y)=2\int_{y}f(y),
\]
applying Assumptions \ref{assu:assumption for xi} and \ref{assu:inhomogeneous function},
Lemma \ref{lem:partial H} and Theorem \ref{thm:esimate for u_n,epsilon},
with \cite[Remark 3.2]{dareiotis2020nonlinear}, we have
\begin{align*}
N_{1} & \leq\mathbb{E}\left|\left\Vert \partial_{a}H_{\theta}\right\Vert _{L_{\infty}(Q_{T}\times\mathbb{R})}\int_{t,x}\int_{t-\theta}^{t}\big(F(s,\cdot,u_{n,\epsilon})\big)^{(\gamma)}ds\int_{a}(\rho_{\lambda}(u_{n,\epsilon}^{(\gamma)}(s,x)-a))\right|\\
 & \leq2\mathbb{E}\left|\left\Vert \partial_{a}H_{\theta}\right\Vert _{L_{\infty}(Q_{T}\times\mathbb{R})}\theta\int_{t,x}\big(F(t,\cdot,u_{n,\epsilon})\big)^{(\gamma)}\right|\\
 & \leq C\theta\left(\mathbb{E}\left\Vert \partial_{a}H_{\theta}\right\Vert _{L_{\infty}(Q_{T}\times\mathbb{R})}^{2}\right)^{1/2}\left(\mathbb{E}\left\Vert F(\cdot,\cdot,u_{n,\epsilon})\right\Vert _{L_{2}(Q_{T})}^{2}\right)^{1/2}\\
 & \leq C\theta\left(\mathbb{E}\left\Vert \partial_{a}H_{\theta}\right\Vert _{L_{\infty}(Q_{T}\times\mathbb{R})}^{2}\right)^{1/2}\left(1+\mathbb{E}\left\Vert u_{n,\epsilon}\right\Vert _{L_{2}(Q_{T})}^{2}\right)^{1/2}\leq C(n,\epsilon)\theta^{1-\mu}.
\end{align*}
Similarly, using Theorem \ref{thm:estimate for L2 for G}, we have
\begin{align}
N_{2} & \leq\mathbb{E}\left|\left\Vert \partial_{a}H_{\theta}\right\Vert _{L_{\infty}(Q_{T}\times\mathbb{R})}\int_{t,x}\int_{t-\theta}^{t}\big(\frac{1}{\epsilon}(u_{n,\epsilon}-S)^{+}\big)^{(\gamma)}ds\int_{a}(\rho_{\lambda}(u_{n,\epsilon}^{(\gamma)}(s,x)-a))\right|\nonumber\\
 & \leq\mathbb{E}\left|\left\Vert \partial_{a}H_{\theta}\right\Vert _{L_{\infty}(Q_{T}\times\mathbb{R})}\theta\int_{t,x}\big(\frac{1}{\epsilon}(u_{n,\epsilon}-S)^{+}\big)^{(\gamma)}\right|\label{eq:star property estimate of penalty} \\
 & \leq C\theta\left(\mathbb{E}\left\Vert \partial_{a}H_{\theta}\right\Vert _{L_{\infty}(Q_{T}\times\mathbb{R})}^{2}\right)^{1/2}\left(\mathbb{E}\left\Vert \frac{1}{\epsilon}(u_{n,\epsilon}-S)^{+}\right\Vert _{L_{2}(Q_{T})}^{2}\right)^{1/2}\leq C(n,\epsilon)\theta^{1-\mu}.\nonumber 
\end{align}
The estimates for $N_{\lambda,\gamma}^{(1)}$, $N_{\lambda,\gamma}^{(2)}$
and $N_{\lambda,\gamma}^{(3)}$ can be obtained as in the proof of
\cite[Lemma 5.2]{dareiotis2019entropy}. Combining these estimates
and following the proof of \cite[Lemma 5.2]{dareiotis2020nonlinear},
we have
\begin{align*}
 & \mathbb{E}\int_{t,x,a}H_{\theta}(t,x,u_{n,\epsilon}(t,x))\\
 & \leq\limsup_{\lambda\rightarrow0}\limsup_{\gamma\rightarrow0}\mathbb{E}\left(\left|N_{\lambda,\gamma}^{(1)}\right|+\left|N_{\lambda,\gamma}^{(3)}\right|\right)+\limsup_{\lambda\rightarrow0}\limsup_{\gamma\rightarrow0}\mathbb{E}\left|N_{\lambda,\gamma}^{(4)}\right|\\
 & \quad+\lim_{\lambda\rightarrow0}\lim_{\gamma\rightarrow0}\mathbb{E}N_{\lambda,\gamma}^{(2)}\leq C(n,\epsilon)\theta^{1-\mu}+\mathcal{B}(u_{n,\epsilon},\tilde{u},\theta).
\end{align*}
Moreover, if $\mathbb{E}\left\Vert \xi\right\Vert _{L_{2}(\mathbb{T}^{d})}^{4}<\infty$,
with Theorems \ref{thm:esimate for u_n,epsilon} and \ref{thm:estimate for L2 for G},
the constant $C(n,\epsilon)$ in the above can be selected to be independent
of $n$ and $\epsilon$.
\end{proof}

\section{$L_{1}^{+}$ estimate}

Note that $\varrho_{\varsigma}=\rho_{\varsigma}^{\otimes d}$.
\begin{lem}
\label{lem:Lemma for L1}Let $G(t,x,r)$ and $\tilde{G}(t,x,r)$ be
two functions, which are Lipschitz continuous in $r$, and satisfy
$G(\cdot,\cdot,0),\tilde{G}(\cdot,\cdot,0)\in\ensuremath{L_{2}(\Omega_{T};L_{2}(\mathbb{T}^{d}))}$.
Suppose that $u$ and $\tilde{u}$ are entropy solutions of $\Pi(\Phi,F-G,\xi)$
and $\Pi(\tilde{\Phi},F-\tilde{G},\tilde{\xi})$, respectively. Let
Assumptions \ref{assu:assumption for phi}-\ref{assu:inhomogeneous function}
hold for both $(\Phi,F,\sigma,\xi)$ and $(\tilde{\Phi},F,\sigma,\tilde{\xi})$.
If $u$ has the $(\star)$-property, then for every non-negative $\varphi\in C_{c}^{\infty}((0,T))$
such that 
\[
\left\Vert \varphi\right\Vert _{L_{\infty}(0,T)}\lor\left\Vert \partial_{t}\varphi\right\Vert _{L_{1}(0,T)}\leq1,
\]
and $\varsigma,\delta\in(0,1]$, $\lambda\in[0,1]$ and $\alpha\in(0,1\land(m/2))$,
we have
\begin{align}
 & -\mathbb{E}\int_{t,x,y}(u(t,x)-\tilde{u}(t,y))^{+}\varrho_{\varsigma}(x-y)\partial_{t}\varphi(t)\nonumber\\
 & \leq C\varsigma^{-2}\left(\mathbb{E}\left\Vert \mathbf{1}_{\{\left|u\right|\geq R_{\lambda}\}}(1+\left|u\right|)\right\Vert _{L_{m}(Q_{T})}^{m}+\mathbb{E}\left\Vert \mathbf{1}_{\{\left|\tilde{u}\right|\geq R_{\lambda}\}}(1+\left|\tilde{u}\right|)\right\Vert _{L_{m}(Q_{T})}^{m}\right)\nonumber \\
 & \quad+C\left(\delta^{2\kappa}+\varsigma^{\bar{\kappa}}+\varsigma^{-2}\lambda^{2}+\varsigma^{-2}\delta^{2\alpha}\right)\cdot\mathbb{E}\left(1+\left\Vert u\right\Vert _{L_{m+1}(Q_{T})}^{m+1}+\left\Vert \tilde{u}\right\Vert _{L_{m+1}(Q_{T})}^{m+1}\right)\label{eq:intermitineq} \\
 & \quad+C\mathbb{E}\int_{t,x,y}(u(t,x)-\tilde{u}(t,y))^{+}\varrho_{\varsigma}(x-y)\varphi(t)\nonumber \\
 & \quad+C\mathbb{E}\int_{t,x,y}\mathbf{1}_{\{\tilde{u}(t,y)\leq u(t,x)\}}\left(\tilde{G}(t,y,\tilde{u}(t,y))-G(t,x,u(t,x))\right)^{+}\varrho_{\varsigma}(x-y)\nonumber 
\end{align}
for a constant $C$ depending only on $N_{0}$, $K$, $d$ and $T$.
The parameter $R_{\lambda}$ is defined by
\[
R_{\lambda}:=\sup\left\{ R\in[0,\infty]:\left|\zeta(r)-\tilde{\zeta}(r)\right|\leq\lambda,\forall\left|r\right|<R\right\} .
\]
\end{lem}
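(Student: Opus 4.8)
The plan is a stochastic doubling-of-variables argument in the spirit of \cite{dareiotis2019entropy}, using the hypothesis that $u$ has the $(\star)$-property to dispose of the stochastic integral. I would first fix a smooth convex approximation $\eta_\delta\in C^2(\mathbb{R})$ of $r\mapsto r^+$ with $\eta_\delta(0)=\eta_\delta'(0)=0$ and $\eta_\delta''=\rho_\delta$, so that $0\le\eta_\delta'\le\mathbf{1}_{(0,\infty)}$, $\operatorname{supp}\eta_\delta''\subset(0,\delta)$, $|\eta_\delta(r)-r^+|\le\delta$ and $\eta_\delta'(r)\,|r|\le r^+$; note $\eta_\delta(\cdot-a),\,\eta_\delta(a-\cdot)\in\mathcal{E}$ for every $a\in\mathbb{R}$. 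Set
\[
\phi_\theta(t,x,s,y):=\varrho_\varsigma(x-y)\,\rho_\theta(t-s)\,\varphi(\tfrac{t+s}{2}),
\]
which is precisely the test function of the $(\star)$-property with $g=\varrho_\varsigma$. For a.e.\ $(s,y)$ apply the entropy inequality (iii) of Definition \ref{def:entropy solution without} for $u$ (recall $u$ solves $\Pi(\Phi,F-G,\xi)$, so $F$ is replaced there by $F-G$) with entropy $\eta_\delta(\cdot-a)$ and test function $\phi_\theta(\cdot,\cdot,s,y)$, then integrate against $\rho_\beta(\tilde u(s,y)-a)\,da\,ds\,dy$ and take expectations; symmetrically, for a.e.\ $(t,x)$ apply (iii) for $\tilde u$ (with $F-\tilde G$) with entropy $\eta_\delta(a-\cdot)$, test function $\phi_\theta(t,x,\cdot,\cdot)$, integrate against $\rho_\beta(u(t,x)-a)\,da\,dt\,dx$, take expectations, and let $\beta\to0$ (Lemma \ref{lem:appro for star}(ii) legitimizes passing this mollification through the stochastic terms). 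Adding the two resulting inequalities is the starting point.

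Next I collapse the time doubling: the two time-derivative terms combine, via $\partial_t\rho_\theta(t-s)+\partial_s\rho_\theta(t-s)\equiv0$, into $-\mathbb{E}\int\eta_\delta(u-\tilde u)\varrho_\varsigma\rho_\theta(t-s)\varphi'(\tfrac{t+s}{2})$, which tends to $-\mathbb{E}\int_{t,x,y}\eta_\delta(u(t,x)-\tilde u(t,y))\varrho_\varsigma(x-y)\partial_t\varphi(t)$ as $\theta\to0^+$ by the $L_{m+1}$-integrability of $u,\tilde u$, with no initial term since $\varphi$ is supported in $(0,T)$; replacing $\eta_\delta(u-\tilde u)$ by $(u-\tilde u)^+$ costs at most $C\delta\le C\delta^{2\kappa}$ (as $2\kappa\le1$), producing the left-hand side of (\ref{eq:intermitineq}). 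The lower-order terms on the right are routine: splitting $F(t,x,u)-F(t,y,\tilde u)$ into a spatial part, $\le K|x-y|^\kappa\le K\varsigma^{\kappa}$ on $\operatorname{supp}\varrho_\varsigma$ (absorbed into $C\varsigma^{\bar{\kappa}}$), and a value part, $\le N_0|u-\tilde u|$ with $\eta_\delta'(u-\tilde u)\,|u-\tilde u|\le(u-\tilde u)^+$, gives the term $C\mathbb{E}\int(u-\tilde u)^+\varrho_\varsigma\varphi$; the combined penalty term satisfies $\eta_\delta'(u-\tilde u)\big(\tilde G(t,y,\tilde u)-G(t,x,u)\big)\le\mathbf{1}_{\{\tilde u\le u\}}\big(\tilde G(t,y,\tilde u)-G(t,x,u)\big)^+$ directly, since $\eta_\delta'\le\mathbf{1}_{(0,\infty)}$.

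For the noise terms, the martingale from $u$'s inequality has zero expectation: on $\operatorname{supp}\rho_\theta(t-\cdot)$ one has $s<t$ (Remark \ref{rem:simplify for F}), so after a stochastic Fubini its $dW^k_t$-integrand is $\mathcal{F}_t$-adapted. The martingale from $\tilde u$'s inequality, after the $\beta\to0$ limit, equals $\mathbb{E}\int_{t,x}H_\theta(t,x,u(t,x))$ with $h(r)=-\eta_\delta'(-r)$ (so $h'=\rho_\delta(-\cdot)\in C_c^\infty$), hence is $\le C\theta^{1-\mu}+\mathcal{B}(u,\tilde u,\theta)$ by the $(\star)$-property of $u$. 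On the other hand the It\^o second-order terms of the two inequalities sum to $\tfrac12\mathbb{E}\int\eta_\delta''(u-\tilde u)\sum_k\big(|\sigma^k(u)|^2+|\sigma^k(\tilde u)|^2\big)\phi_\theta$, and adding $\mathcal{B}(u,\tilde u,\theta)=-\mathbb{E}\int\eta_\delta''(u-\tilde u)\sum_k\sigma^k(u)\sigma^k(\tilde u)\phi_\theta$ (all three carry the same localizing factor $\eta_\delta''(u(t,x)-\tilde u(s,y))$) completes the square to $\tfrac12\mathbb{E}\int\eta_\delta''(u-\tilde u)\sum_k|\sigma^k(u)-\sigma^k(\tilde u)|^2\phi_\theta$; since $\eta_\delta''=\rho_\delta\le2/\delta$ localizes to $|u-\tilde u|<\delta$ and there $\sum_k|\sigma^k(u)-\sigma^k(\tilde u)|^2\le K^2(|u-\tilde u|^{1/2+\kappa}+|u-\tilde u|)^2\le C|u-\tilde u|^{1+2\kappa}$, this is $\le C\delta^{2\kappa}$, while the $\theta^{1-\mu}$ and $\theta$-errors vanish as $\theta\to0$.

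The remaining, genuinely delicate, step is the parabolic one. After $\theta\to0$ it is the sum of $\mathbb{E}\int_{t,x,y}\llbracket\zeta^2\eta_\delta'(\cdot-\tilde u(t,y))\rrbracket(u(t,x))\,\Delta_x\varrho_\varsigma(x-y)\,\varphi$, the analogous $\tilde u$-term with $\tilde\zeta$ and $\Delta_y\varrho_\varsigma$, and the nonpositive dissipations $-\mathbb{E}\int\eta_\delta''(u-\tilde u)\big(|\nabla_x\llbracket\zeta\rrbracket(u)|^2+|\nabla_y\llbracket\tilde\zeta\rrbracket(\tilde u)|^2\big)\varrho_\varsigma\varphi$. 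Using $\Delta_x\varrho_\varsigma(x-y)=\Delta_y\varrho_\varsigma(x-y)$, one integration by parts, the chain-rule identity (ii) of Definition \ref{def:entropy solution without} (applicable after truncating $\zeta$ and passing to the limit, since $\zeta(u)\nabla\llbracket\zeta\rrbracket(u)=\nabla\Phi(u)\in L_{(m+1)/m}$), and $\nabla_x\varrho_\varsigma(x-y)=-\nabla_y\varrho_\varsigma(x-y)$, these recombine in the Carrillo--Kruzhkov fashion into a nonpositive ``entropy-dissipation defect'' $-\mathbb{E}\int\eta_\delta''(u-\tilde u)|\nabla_x\llbracket\zeta\rrbracket(u)-\nabla_y\llbracket\tilde\zeta\rrbracket(\tilde u)|^2\varrho_\varsigma\varphi\le0$, to be discarded, plus errors. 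These errors come from (a) the discrepancy $\zeta$ versus $\tilde\zeta$: on $\{|u|<R_\lambda,\,|\tilde u|<R_\lambda\}$ one has $|\zeta-\tilde\zeta|\le\lambda$, which with $\|\Delta\varrho_\varsigma\|_{L_1}\le C\varsigma^{-2}$ and $\llbracket\zeta^2\rrbracket(r)=\Phi(r)\le C(1+|r|)^{m}$ produces the $C\varsigma^{-2}\lambda^2\,\mathbb{E}(1+\|u\|_{L_{m+1}(Q_T)}^{m+1}+\|\tilde u\|_{L_{m+1}(Q_T)}^{m+1})$ term and, on the complement, the $C\varsigma^{-2}(\mathbb{E}\|\mathbf{1}_{\{|u|\ge R_\lambda\}}(1+|u|)\|_{L_m(Q_T)}^{m}+\mathbb{E}\|\mathbf{1}_{\{|\tilde u|\ge R_\lambda\}}(1+|\tilde u|)\|_{L_m(Q_T)}^{m})$ term; and (b) the $\delta$-band where $\eta_\delta''(u-\tilde u)\ne0$, where a H\"older expansion of $\zeta^2=\Phi'$ (the constraint $\alpha<m/2$ reflecting $\Phi'(r)\lesssim|r|^{m-1}$) together with Lemma \ref{lem:change for u} (turning $\mathbb{E}\int|u(t,x)-u(t,y)|\varrho_\varsigma(x-y)$ into a $\varsigma^{2/(m+1)}$-error bounded via $\|\nabla\llbracket\zeta\rrbracket(u)\|_{L_1(Q_T)}$ by $\mathbb{E}\|u\|_{L_{m+1}(Q_T)}^{m+1}$) yields the $C(\varsigma^{\bar{\kappa}}+\varsigma^{-2}\delta^{2\alpha})\,\mathbb{E}(1+\|u\|_{L_{m+1}(Q_T)}^{m+1}+\|\tilde u\|_{L_{m+1}(Q_T)}^{m+1})$ term. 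Collecting everything gives (\ref{eq:intermitineq}). I expect this last step — making the chain rule (ii) rigorous for the unbounded multiplier $\zeta\eta_\delta'$, and then the $R_\lambda$-splitting together with the fractional bookkeeping for $\llbracket\zeta\rrbracket$ near the origin with the exact powers of $\delta$ and $\varsigma$ — to be the main obstacle; everything else is routine estimation.
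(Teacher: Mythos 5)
Your strategy is the same as the paper's: double the variables with $\eta_{\delta}$ and the test function $\rho_{\theta}(t-s)\varrho_{\varsigma}(x-y)\varphi(\frac{t+s}{2})$, substitute $a=\tilde{u}(s,y)$ (resp.\ $a=u(t,x)$) by mollification, kill the stochastic integral coming from $u$'s inequality by adaptedness (the paper's term $B_{5}\equiv0$), control the one coming from $\tilde{u}$'s inequality by the $(\star)$-property of $u$ with $h(r)=-\eta_{\delta}^{\prime}(-r)$ and $g=\varrho_{\varsigma}$, complete the square in the It\^{o} correction terms using Assumption \ref{assu:assumption for sigma} to get $C\delta^{2\kappa}$, and treat the $F$- and $G$-terms exactly as in (\ref{eq:FGdiffL1}); after $\theta\rightarrow0^{+}$ and the replacement of $\eta_{\delta}$ by $(\cdot)^{+}$ this is precisely the paper's derivation of (\ref{eq:intermitineq}).

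The one place where your sketch goes off track is the bookkeeping of the degenerate second-order terms, which you yourself flag as the main obstacle. After the Carrillo-type recombination the entire error is carried by the kernel
\begin{equation*}
\mathbb{E}\int_{t,x,y}\mathbf{1}_{\{\tilde{u}\leq u\}}\left|\partial_{x_{i}y_{i}}\phi_{\varsigma}\right|\int_{\tilde{u}}^{u}\int_{\tilde{u}}^{u}\mathbf{1}_{\{\tilde{r}\leq r\}}\eta_{\delta}^{\prime\prime}(r-\tilde{r})\left|\zeta(r)-\tilde{\zeta}(\tilde{r})\right|^{2}d\tilde{r}dr,
\end{equation*}
which is estimated pointwise in $(r,\tilde{r})$ using only Assumption \ref{assu:assumption for phi}: split $|\zeta(r)-\tilde{\zeta}(\tilde{r})|^{2}\leq 2|\zeta(r)-\zeta(\tilde{r})|^{2}+2|\zeta(\tilde{r})-\tilde{\zeta}(\tilde{r})|^{2}$, use $|\zeta^{\prime}(r)|\leq K|r|^{\frac{m-3}{2}}$ on the band $|r-\tilde{r}|\leq\delta$ (this is where $\alpha<1\land(m/2)$ and the $\varsigma^{-2}\delta^{2\alpha}$ term come from) and the definition of $R_{\lambda}$ elsewhere, together with $\Vert\partial_{x_{i}y_{i}}\varrho_{\varsigma}\Vert_{L_{1}}\leq C\varsigma^{-2}$; this is what the paper imports from the estimates (4.13)--(4.17) of \cite{dareiotis2019entropy}. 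No comparison of $u(t,x)$ with $u(t,y)$ occurs at this stage, so your appeal to Lemma \ref{lem:change for u} is misplaced: it would introduce an extra term of size $C\varsigma^{\frac{2}{m+1}}\big(1+\mathbb{E}\Vert\nabla\left\llbracket \zeta\right\rrbracket (u)\Vert_{L_{1}(Q_{T})}\big)$ which does not appear on the right-hand side of (\ref{eq:intermitineq}), cannot be absorbed into $C\varsigma^{\bar{\kappa}}$ unless $\bar{\kappa}\leq\frac{2}{m+1}$, and your claim that $\mathbb{E}\Vert\nabla\left\llbracket \zeta\right\rrbracket (u)\Vert_{L_{1}(Q_{T})}$ is controlled by $\mathbb{E}\Vert u\Vert_{L_{m+1}(Q_{T})}^{m+1}$ is not available from Definition \ref{def:entropy solution without} (indeed the paper keeps this quantity as a separate term in Lemma \ref{lem:L1}(ii), which is where the diagonal passage and Lemma \ref{lem:change for u} are actually used). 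Dropping that detour and running the kernel estimate as above, the rest of your argument coincides with the paper's proof.
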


\begin{proof}
For sufficiently small $\theta>0$, we introduce 
\[
\phi_{\theta,\varsigma}(t,x,s,y):=\rho_{\theta}(t-s)\varrho_{\varsigma}(x-y)\varphi(\frac{t+s}{2}),\quad \phi_{\varsigma}(t,x,y)=\varrho_{\varsigma}(x-y)\varphi(t).
\]
Furthermore, for each $\delta>0$, we define the function $\eta_{\delta}\in C^{2}(\mathbb{R})$
by
\[
\eta_{\delta}(0)=\eta_{\delta}^{\prime}(0)=0,\quad\eta_{\delta}^{\prime\prime}(r)=\rho_{\delta}(r).
\]
Thus, we have
\[
\left|\eta_{\delta}(r)-r^{+}\right|\leq\delta,\quad\mathrm{supp}\ \eta_{\delta}^{\prime\prime}\subset[0,\delta],\quad\int_{\mathbb{R}}\eta_{\delta}^{\prime\prime}(r)dr\leq2,\quad|\eta_{\delta}^{\prime\prime}|\leq2\delta^{-1}.
\]
Fix $(a,s,y)\in\mathbb{R}\times Q_{T}$. Since $u$ is the entropy
solution of $\Pi(\Phi,F-G,\xi)$, using the entropy inequality of
$u$ in Definition \ref{def:entropy solution without} with $\eta_{\delta}(r-a)$ and $\phi_{\theta,\varsigma}(\cdot,\cdot,s,y)$ instead of $\eta(r)$ and $\phi$, we have
\begin{align*}
 & -\int_{t,x}\eta_{\delta}(u-a)\partial_{t}\phi_{\theta,\varsigma}\\
 & \leq\int_{t,x}\llbracket \zeta^2 \eta_{\delta}^{\prime}(\cdot-a)\rrbracket (u) \Delta_{x}\phi_{\theta,\varsigma}+\int_{t,x}\eta_{\delta}^{\prime}(u-a)\left(F(t,x,u)-G(t,x,u)\right)\phi_{\theta,\varsigma}\\
 & \quad+\int_{t,x}\left(\frac{1}{2}\eta_{\delta}^{\prime\prime}(u-a)\sum_{k=1}^{\infty}|\sigma^{k}(u)|^{2}\phi_{\theta,\varsigma}-\eta_{\delta}^{\prime\prime}(u-a)|\nabla_{x}\left\llbracket \zeta\right\rrbracket (u)|^{2}\phi_{\theta,\varsigma}\right)\\
 & \quad+\sum_{k=1}^{\infty}\int_{0}^{T}\int_{x}\eta_{\delta}^{\prime}(u-a)\phi_{\theta,\varsigma}\sigma^{k}(u)dW_{t}^{k},
\end{align*}
where $u=u(t,x)$. Notice that all the expressions are continuous
in $(a,s,y)$. We take $a=\tilde{u}(s,y)$ by convolution and integrate
over $(s,y)\in Q_{T}$. By taking expectations, we have
\begin{align}
 & -\mathbb{E}\int_{t,x,s,y}\eta_{\delta}(u-\tilde{u})\partial_{t}\phi_{\theta,\varsigma}\nonumber\\
 & \leq\mathbb{E}\int_{t,x,s,y}\llbracket \zeta^2 \eta_{\delta}^{\prime}(\cdot-\tilde{u})\rrbracket (u) \Delta_{x}\phi_{\theta,\varsigma}+\mathbb{E}\int_{t,x,s,y}\eta_{\delta}^{\prime}(u-\tilde{u})\left(F(t,x,u)-G(t,x,u)\right)\phi_{\theta,\varsigma}\label{eq:entropyforu} \\
 & \quad+\mathbb{E}\int_{t,x,s,y}\left(\frac{1}{2}\eta_{\delta}^{\prime\prime}(u-\tilde{u})\sum_{k=1}^{\infty}|\sigma^{k}(u)|^{2}\phi_{\theta,\varsigma}-\eta_{\delta}^{\prime\prime}(u-\tilde{u})|\nabla_{x}\left\llbracket \zeta\right\rrbracket (u)|^{2}\phi_{\theta,\varsigma}\right)\nonumber \\
 & \quad-\mathbb{E}\int_{s,y}\left[\sum_{k=1}^{\infty}\int_{0}^{T}\int_{x}\eta_{\delta}^{\prime}(u-a)\phi_{\theta,\varsigma}\sigma^{k}(u)dW_{t}^{k}\right]_{a=\tilde{u}},\nonumber 
\end{align}
where $u=u(t,x)$ and $\tilde{u}=\tilde{u}(s,y)$. Similarly, for
each $(a,t,x)\in\mathbb{R}\times Q_{T}$ and entropy solution $\tilde{u}$,
we apply the entropy inequality of $\tilde{u}$ with $\eta(r):=\eta_{\delta}(a-r)$ and
$\phi(s,y):=\phi_{\theta,\varsigma}(t,x,s,y)$.
After substituting $a=u(t,x)$ by convolution, integrating over $(t,x)\in Q_{T}$
and taking expectations, we have
\begin{align*}
 & -\mathbb{E}\int_{t,x,s,y}\eta_{\delta}(u-\tilde{u})\partial_{s}\phi_{\theta,\varsigma}\\
 & \leq\mathbb{E}\int_{t,x,s,y}\llbracket \tilde{\zeta}^2 \eta_{\delta}^{\prime}(u-\cdot)\rrbracket (\tilde{u})\Delta_{y}\phi_{\theta,\varsigma}-\mathbb{E}\int_{t,x,s,y}\eta_{\delta}^{\prime}(u-\tilde{u})\big(F(s,y,\tilde{u})-\tilde{G}(s,y,\tilde{u})\big)\phi_{\theta,\varsigma}\\
 & \quad+\mathbb{E}\int_{t,x,s,y}\left(\frac{1}{2}\eta_{\delta}^{\prime\prime}(u-\tilde{u})\sum_{k=1}^{\infty}|\sigma^{k}(\tilde{u})|^{2}\phi_{\theta,\varsigma}-\eta_{\delta}^{\prime\prime}(u-\tilde{u})|\nabla_{y}\llbracket \tilde{\zeta}\rrbracket (\tilde{u})|^{2}\phi_{\theta,\varsigma}\right)\\
 & \quad-\mathbb{E}\int_{t,x}\left[\sum_{k=1}^{\infty}\int_{0}^{T}\int_{y}\eta_{\delta}^{\prime}(a-\tilde{u})\phi_{\theta,\varsigma}\sigma^{k}(\tilde{u})dW_{s}^{k}\right]_{a=u}.
\end{align*}
Adding them together, we have
\begin{equation}
-\mathbb{E}\int_{t,x,s,y}\eta_{\delta}(u-\tilde{u})\left(\partial_{t}\phi_{\theta,\varsigma}+\partial_{s}\phi_{\theta,\varsigma}\right)\leq\sum_{i=1}^{6}B_{i},\label{eq:inequalitieswiththeta}
\end{equation}
with
\[
B_{1}:=\mathbb{E}\int_{t,x,s,y}\llbracket \zeta^2 \eta_{\delta}^{\prime}(\cdot-\tilde{u})\rrbracket (u)\Delta_{x}\phi_{\theta,\varsigma}+\mathbb{E}\int_{t,x,s,y}\llbracket \tilde{\zeta}^2 \eta_{\delta}^{\prime}(u-\cdot)\rrbracket (\tilde{u})\Delta_{y}\phi_{\theta,\varsigma},
\]
\[
B_{2}:=\mathbb{E}\int_{t,x,s,y}\eta_{\delta}^{\prime}(u-\tilde{u})\Big[\big(F(t,x,u)-G(t,x,u)\big)-\big(F(s,y,\tilde{u})-\tilde{G}(s,y,\tilde{u})\big)\Big]\phi_{\theta,\varsigma},
\]
\[
B_{3}:=\mathbb{E}\int_{t,x,s,y}\left(\frac{1}{2}\eta_{\delta}^{\prime\prime}(u-\tilde{u})\sum_{k=1}^{\infty}|\sigma^{k}(u)|^{2}\phi_{\theta,\varsigma}-\eta_{\delta}^{\prime\prime}(u-\tilde{u})|\nabla_{x}\left\llbracket \zeta\right\rrbracket (u)|^{2}\phi_{\theta,\varsigma}\right),
\]
\[
B_{4}:=\mathbb{E}\int_{t,x,s,y}\left(\frac{1}{2}\eta_{\delta}^{\prime\prime}(u-\tilde{u})\sum_{k=1}^{\infty}|\sigma^{k}(\tilde{u})|^{2}\phi_{\theta,\varsigma}-\eta_{\delta}^{\prime\prime}(u-\tilde{u})|\nabla_{y}\llbracket \tilde{\zeta}\rrbracket (\tilde{u})|^{2}\phi_{\theta,\varsigma}\right),
\]
\[
B_{5}:=\mathbb{E}\int_{s,y}\left[\sum_{k=1}^{\infty}\int_{0}^{T}\int_{x}\eta_{\delta}^{\prime}(u-a)\phi_{\theta,\varsigma}\sigma^{k}(u)dW_{t}^{k}\right]_{a=\tilde{u}},
\]
\[
B_{6}:=\mathbb{E}\int_{t,x}\left[\sum_{k=1}^{\infty}\int_{0}^{T}\int_{y}\eta_{\delta}^{\prime}(a-\tilde{u})\phi_{\theta,\varsigma}\sigma^{k}(u)dW_{s}^{k}\right]_{a=u}.
\]
Since the integrand of the stochastic integral in $B_{5}$ vanish
on $[0,s]$, we have $B_{5}\equiv0$. 

For $B_{6}$, applying the $(\star)$-property of $u$ with $h(r):=-\eta_{\delta}^{\prime}(-r)$
and $g(x,y):=\varrho_{\varsigma}(x-y)$, we have
\[
B_{6}\leq C\theta^{1-\mu}-\sum_{k=1}^{\infty}\mathbb{E}\int_{t,x,s,y}\phi_{\theta,\varsigma}\sigma^{k}(u)\sigma^{k}(\tilde{u})\eta_{\delta}^{\prime\prime}(u-\tilde{u}),
\]
for a constant $C$ independent of $\theta$, and we have $\mu=\frac{3m+5}{4(m+1)}<1$.
Therefore, taking $\theta\rightarrow0^{+}$ as in the proof of \cite[Theorem 4.1]{dareiotis2019entropy},
we have
\begin{align}
 & -\mathbb{E}\int_{t,x,y}\eta_{\delta}(u-\tilde{u})\partial_{t}\phi_{\varsigma}\nonumber\\
 & \leq\mathbb{E}\int_{t,x,y}\llbracket \zeta^2 \eta_{\delta}^{\prime}(\cdot-\tilde{u})\rrbracket (u)\Delta_{x}\phi_{\varsigma}+\mathbb{E}\int_{t,x,y}\llbracket \tilde{\zeta}^2 \eta_{\delta}^{\prime}(u-\cdot)\rrbracket (\tilde{u})\Delta_{y}\phi_{\varsigma}\nonumber \\
 & \quad+\mathbb{E}\int_{t,x,y}\eta_{\delta}^{\prime}(u-\tilde{u})\Big[\big(F(t,x,u)-G(t,x,u)\big)-\big(F(t,y,\tilde{u})-\tilde{G}(t,y,\tilde{u})\big)\Big]\phi_{\varsigma}\label{eq:inequalitywitht} \\
 & \quad+\mathbb{E}\int_{t,x,y}\left(\frac{1}{2}\eta_{\delta}^{\prime\prime}(u-\tilde{u})\sum_{k=1}^{\infty}|\sigma^{k}(u)|^{2}\phi_{\varsigma}-\eta_{\delta}^{\prime\prime}(u-\tilde{u})|\nabla_{x}\left\llbracket \zeta\right\rrbracket (u)|^{2}\phi_{\varsigma}\right)\nonumber \\
 & \quad+\mathbb{E}\int_{t,x,y}\left(\frac{1}{2}\eta_{\delta}^{\prime\prime}(u-\tilde{u})\sum_{k=1}^{\infty}|\sigma^{k}(\tilde{u})|^{2}\phi_{\varsigma}-\eta_{\delta}^{\prime\prime}(u-\tilde{u})|\nabla_{y}\llbracket \tilde{\zeta}\rrbracket (\tilde{u})|^{2}\phi_{\varsigma}\right)\nonumber \\
 & \quad-\sum_{k=1}^{\infty}\mathbb{E}\int_{t,x,y}\phi_{\varsigma}\sigma^{k}(u)\sigma^{k}(\tilde{u})\eta_{\delta}^{\prime\prime}(u-\tilde{u}),\nonumber 
\end{align}
where $u=u(t,x)$ and $\tilde{u}=\tilde{u}(t,y)$. For the terms including
$\sigma^{k}$, using the property of $\eta_{\delta}$ and Assumption
\ref{assu:assumption for sigma}, we have
\begin{align}
 & \frac{1}{2}\mathbb{E}\int_{t,x,y}\eta_{\delta}^{\prime\prime}(u-\tilde{u})\sum_{k=1}^{\infty}\left(|\sigma^{k}(u)|^{2}-2\sigma^{k}(u)\sigma^{k}(\tilde{u})+|\sigma^{k}(\tilde{u})|^{2}\right)\phi_{\varsigma}\nonumber\\
 & \leq C\mathbb{E}\int_{t,x,y}\eta_{\delta}^{\prime\prime}(u-\tilde{u})\left(\sum_{k=1}^{\infty}|\sigma^{k}(u)-\sigma^{k}(\tilde{u})|^{2}\right)\phi_{\varsigma}\label{eq:sigmaL1} \\
 & \leq C\mathbb{E}\int_{t,x,y}\eta_{\delta}^{\prime\prime}(u-\tilde{u})\left|u-\tilde{u}\right|^{1+2\kappa}\phi_{\varsigma}\leq C\delta^{2\kappa}.\nonumber 
\end{align}
Furthermore, since $\partial_{x_{i}}\phi_{\varsigma}=-\partial_{y_{i}}\phi_{\varsigma}$ and $\partial_{x_i}\int_{0}^{\tilde{u}}\eta_{\delta}^{\prime}(r-\tilde{u})\zeta^2(r)dr=0$,
we have
\begin{align*}
N_{1} & :=\mathbb{E}\int_{t,x,y}\llbracket \zeta^2 \eta_{\delta}^{\prime}(\cdot-\tilde{u})\rrbracket (u)\Delta_{x}\phi_{\varsigma}\\
 & =-\mathbb{E}\int_{t,x,y}\mathbf{1}_{\{\tilde{u}\leq u\}}\partial_{x_{i}y_{i}}\phi_{\varsigma}\int_{\tilde{u}}^{u}\eta_{\delta}^{\prime}(r-\tilde{u})\zeta^{2}(r)dr\\
 & =-\mathbb{E}\int_{t,x,y}\mathbf{1}_{\{\tilde{u}\leq u\}}\partial_{x_{i}y_{i}}\phi_{\varsigma}\int_{\tilde{u}}^{u}\int_{\tilde{u}}^{r}\eta_{\delta}^{\prime\prime}(r-\tilde{r})\zeta^{2}(r)d\tilde{r}dr\\
 & =-\mathbb{E}\int_{t,x,y}\mathbf{1}_{\{\tilde{u}\leq u\}}\partial_{x_{i}y_{i}}\phi_{\varsigma}\int_{\tilde{u}}^{u}\int_{\tilde{u}}^{u}\mathbf{1}_{\{\tilde{r}\leq r\}}\eta_{\delta}^{\prime\prime}(r-\tilde{r})\zeta^{2}(r)d\tilde{r}dr.
\end{align*}
Similarly, we have
\begin{align*}
N_{2} & :=\mathbb{E}\int_{t,x,y}\llbracket \tilde{\zeta}^2 \eta_{\delta}^{\prime}(u-\cdot)\rrbracket (\tilde{u})\Delta_{y}\phi_{\varsigma}\\
 & =-\mathbb{E}\int_{t,x,y}\mathbf{1}_{\{\tilde{u}\leq u\}}\partial_{x_{i}y_{i}}\phi_{\varsigma}\int_{\tilde{u}}^{u}\eta_{\delta}^{\prime}(u-\tilde{r})\tilde{\zeta}^{2}(\tilde{r})d\tilde{r}\\
 & =-\mathbb{E}\int_{t,x,y}\mathbf{1}_{\{\tilde{u}\leq u\}}\partial_{x_{i}y_{i}}\phi_{\varsigma}\int_{\tilde{u}}^{u}\int_{\tilde{r}}^{u}\eta_{\delta}^{\prime\prime}(r-\tilde{r})\tilde{\zeta}^{2}(\tilde{r})drd\tilde{r}\\
 & =-\mathbb{E}\int_{t,x,y}\mathbf{1}_{\{\tilde{u}\leq u\}}\partial_{x_{i}y_{i}}\phi_{\varsigma}\int_{\tilde{u}}^{u}\int_{\tilde{u}}^{u}\mathbf{1}_{\{\tilde{r}\leq r\}}\eta_{\delta}^{\prime\prime}(r-\tilde{r})\tilde{\zeta}^{2}(\tilde{r})d\tilde{r}dr.
\end{align*}
Notice also that
\begin{align*}
N_{3} & :=-\mathbb{E}\int_{t,x,y}\eta_{\delta}^{\prime\prime}(u-\tilde{u})|\nabla_{x}\llbracket \zeta\rrbracket (u)|^{2}\phi_{\varsigma}-\mathbb{E}\int_{t,x,y}\eta_{\delta}^{\prime\prime}(u-\tilde{u})|\nabla_{y}\llbracket \tilde{\zeta}\rrbracket (\tilde{u})|^{2}\phi_{\varsigma}\\
 & \leq-2\mathbb{E}\int_{t,x,y}\eta_{\delta}^{\prime\prime}(u-\tilde{u})\nabla_{x}\llbracket \zeta\rrbracket (u)\cdot\nabla_{y}\llbracket \tilde{\zeta}\rrbracket (\tilde{u})\phi_{\varsigma}\\
 & =-2\mathbb{E}\int_{t,x,y}\mathbf{1}_{\{\tilde{u}\leq u\}}\phi_{\varsigma}\partial_{x_{i}}\left\llbracket \zeta\right\rrbracket (u)\partial_{y_{i}}\int_{u}^{\tilde{u}}\eta_{\delta}^{\prime\prime}(u-\tilde{r})\tilde{\zeta}(\tilde{r})d\tilde{r}\\
 & =2\mathbb{E}\int_{t,x,y}\mathbf{1}_{\{\tilde{u}\leq u\}}\partial_{y_{i}}\phi_{\varsigma}\partial_{x_{i}}\left\llbracket \zeta\right\rrbracket (u)\int_{u}^{\tilde{u}}\eta_{\delta}^{\prime\prime}(u-\tilde{r})\tilde{\zeta}(\tilde{r})d\tilde{r}.
\end{align*}
Applying \cite[Remark 3.1]{dareiotis2019entropy} with
\[
f(u):=\int_{u}^{\tilde{u}}\eta_{\delta}^{\prime\prime}(u-\tilde{r})\tilde{\zeta}(\tilde{r})d\tilde{r}
\]
and using $\partial_{x_{i}}\left\llbracket \zeta f\right\rrbracket (u)=f(u)\partial_{x_{i}}\left\llbracket \zeta\right\rrbracket (u)$,
we have
\begin{align*}
N_{3} & \leq-2\mathbb{E}\int_{t,x,y}\mathbf{1}_{\{\tilde{u}\leq u\}}\partial_{x_{i}y_{i}}\phi_{\varsigma}\int_{\tilde{u}}^{u}\int_{r}^{\tilde{u}}\eta_{\delta}^{\prime\prime}(r-\tilde{r})\tilde{\zeta}(\tilde{r})\zeta(r)d\tilde{r}dr\\
 & =2\mathbb{E}\int_{t,x,y}\mathbf{1}_{\{\tilde{u}\leq u\}}\partial_{x_{i}y_{i}}\phi_{\varsigma}\int_{\tilde{u}}^{u}\int_{\tilde{u}}^{u}\mathbf{1}_{\{\tilde{r}\leq r\}}\eta_{\delta}^{\prime\prime}(r-\tilde{r})\tilde{\zeta}(\tilde{r})\zeta(r)d\tilde{r}dr.
\end{align*}
Then, we have
\[
\sum_{i=1}^{3}N_{i}\leq\mathbb{E}\int_{t,x,y}\mathbf{1}_{\{\tilde{u}\leq u\}}|\partial_{x_{i}y_{i}}\phi_{\varsigma}|\int_{\tilde{u}}^{u}\int_{\tilde{u}}^{u}\mathbf{1}_{\{\tilde{r}\leq r\}}\eta_{\delta}^{\prime\prime}(r-\tilde{r})|\zeta(r)-\tilde{\zeta}(\tilde{r})|^{2}d\tilde{r}dr.
\]
With \cite[estimates (4.13)-(4.17)]{dareiotis2019entropy}, we have
\begin{align}
\sum_{i=1}^{3}N_{i} & \leq C\varsigma^{-2}(\delta^{2\alpha}+\lambda^{2})\mathbb{E}\left(1+\left\Vert u\right\Vert _{L_{m+1}(Q_{T})}^{m+1}+\left\Vert \tilde{u}\right\Vert _{L_{m+1}(Q_{T})}^{m+1}\right)\nonumber\\
 & \quad+C\varsigma^{-2}\mathbb{E}\left\Vert \mathbf{1}_{\{\left|u\right|\geq R_{\lambda}\}}(1+\left|u\right|)\right\Vert _{L_{m}(Q_{T})}^{m}\label{eq:phiL1} \\
 & \quad+C\varsigma^{-2}\mathbb{E}\left\Vert \mathbf{1}_{\{\left|\tilde{u}\right|\geq R_{\lambda}\}}(1+\left|\tilde{u}\right|)\right\Vert _{L_{m}(Q_{T})}^{m}.\nonumber 
\end{align}
For the other terms in the right hand side of (\ref{eq:inequalitywitht}),
from Assumption \ref{assu:inhomogeneous function}, we have
\begin{align}
 & \mathbb{E}\int_{t,x,y}\eta_{\delta}^{\prime}(u-\tilde{u})\left[F(t,x,u)-G(t,x,u)-F(t,y,\tilde{u})+\tilde{G}(t,y,\tilde{u})\right]\phi_{\varsigma}\nonumber\\
\text{} & \leq\mathbb{E}\int_{t,x,y}\eta_{\delta}^{\prime}(u-\tilde{u})\left(F(t,x,u)-F(t,y,u)\right)\phi_{\varsigma}\nonumber \\
 & \quad+\mathbb{E}\int_{t,x,y}\eta_{\delta}^{\prime}(u-\tilde{u})\left(F(t,y,u)-F(t,y,\tilde{u})\right)\phi_{\varsigma}\label{eq:FGdiffL1} \\
 & \quad+\mathbb{E}\int_{t,x,y}\eta_{\delta}^{\prime}(u-\tilde{u})\left(\tilde{G}(t,y,\tilde{u})-G(t,x,u)\right)\phi_{\varsigma}\nonumber \\
 & \leq C\varsigma^{\bar{\kappa}}+C\mathbb{E}\int_{t,x,y}(u-\tilde{u})^{+}\varrho_{\varsigma}(x-y)\varphi(t)\nonumber \\
 & \quad+C\mathbb{E}\int_{t,x,y}\mathbf{1}_{\{u>\tilde{u}\}}\left(\tilde{G}(t,y,\tilde{u})-G(t,x,u)\right)\varrho_{\varsigma}(x-y)\varphi(t).\nonumber 
\end{align}
Combining inequalities (\ref{eq:inequalitywitht})-(\ref{eq:FGdiffL1})
with
\[
|\mathbb{E}\int_{t,x,y}\eta_{\delta}(u-\tilde{u})\partial_{t}\phi_{\varsigma}-\mathbb{E}\int_{t,x,y}(u-\tilde{u})^{+}\partial_{t}\phi_{\varsigma}|\leq C\delta,
\]
we obtain the desired inequality.
\end{proof}
\begin{lem}
\label{lem:appro initial}Let Assumptions \ref{assu:assumption for phi}-\ref{assu:inhomogeneous function}
hold for $(\Phi,F,\sigma,\xi)$. Let $G(t,x,r)$ be a function satisfying
$G(\cdot,\cdot,0)\in\ensuremath{L_{2}(\Omega_{T};L_{2}(\mathbb{T}^{d}))}$,
and be Lipschitz continuous in $r$ with Lipschitz constant $\bar{K}$.
If $u$ is an entropy solution of $\Pi(\Phi,F-G,\xi)$, we have
\[
\lim_{\tau\rightarrow0^{+}}\frac{1}{\tau}\mathbb{E}\int_{0}^{\tau}\int_{x}\left|u(t,x)-\xi(x)\right|^{2}dt=0.
\]
\end{lem}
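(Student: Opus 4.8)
The plan is to split $\|u(t)-\xi\|_{L_2(\mathbb{T}^d)}^2=\|u(t)\|_{L_2(\mathbb{T}^d)}^2-2\langle u(t),\xi\rangle_{L_2(\mathbb{T}^d)}+\|\xi\|_{L_2(\mathbb{T}^d)}^2$ and, since the left-hand side is non-negative, to reduce the claim to the two assertions
\[
\limsup_{\tau\to0^+}\frac1\tau\int_0^\tau\mathbb{E}\|u(t)\|_{L_2(\mathbb{T}^d)}^2\,dt\le\mathbb{E}\|\xi\|_{L_2(\mathbb{T}^d)}^2,\qquad \lim_{\tau\to0^+}\frac1\tau\int_0^\tau\mathbb{E}\langle u(t),\xi\rangle_{L_2(\mathbb{T}^d)}\,dt=\mathbb{E}\|\xi\|_{L_2(\mathbb{T}^d)}^2 .
\]
Both rest on first upgrading the entropy inequality to a weak It\^o formulation: the affine maps $\eta(r)=\pm r$ both lie in $\mathcal{E}$ (their second derivatives vanish), so Definition \ref{def:entropy solution without}(iii) applied with these two entropies and with a test function $\phi=\varphi\varrho\ge0$ gives two opposite inequalities, hence (decomposing a general $\varrho$ as $(\varrho+\|\varrho\|_{L_\infty})-\|\varrho\|_{L_\infty}$ into non-negative pieces) the identity that for every $\varrho\in C^\infty(\mathbb{T}^d)$ the process $t\mapsto\langle u(t),\varrho\rangle$ admits a continuous modification with
\[
\langle u(t),\varrho\rangle=\langle\xi,\varrho\rangle+\int_0^t\Big(\langle\llbracket\zeta^2\rrbracket(u),\Delta\varrho\rangle+\langle F(\cdot,\cdot,u)-G(\cdot,\cdot,u),\varrho\rangle\Big)ds+\sum_{k=1}^\infty\int_0^t\langle\sigma^k(u),\varrho\rangle dW_s^k,\qquad\text{a.s.}
\]
Here the drift terms belong to $L_1(\Omega_T)$ (using $|\llbracket\zeta^2\rrbracket(u)|\le C(1+|u|^m)$ from Assumption \ref{assu:assumption for phi}, the linear growth of $F$, the Lipschitz bound on $G$, and $u\in L_{m+1}(\Omega_T;L_{m+1}(\mathbb{T}^d))$), and the bracket of the martingale is $\le C\int_0^t(1+\|u(s)\|_{L_2(\mathbb{T}^d)}^2)\,ds$, so the stochastic integral is a genuine martingale; in particular $\mathbb{E}|\langle u(t)-\xi,\varrho\rangle|\to0$ and, conditioning on $\mathcal{F}_0$, $\mathbb{E}[\langle u(t),\varrho\rangle\mid\mathcal{F}_0]=\langle\xi,\varrho\rangle+\rho_t(\varrho)$ with $\mathbb{E}|\rho_t(\varrho)|\to0$ as $t\to0^+$.

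For the energy bound I would test the entropy inequality against truncated quadratics. Pick $\eta_R\in\mathcal{E}$ convex with $\eta_R(r)=r^2$ for $|r|\le R$, $0\le\eta_R(r)\le r^2$, $\mathrm{supp}\,\eta_R''\subset[-R-1,R+1]$, and $|\eta_R'(r)|\le C(1+|r|)$, $0\le\eta_R''\le2$ uniformly in $R$; take $\varrho\equiv1$ (so the Laplacian term drops) and $\varphi=\varphi_\tau$ a smooth approximation of $(1-t/\tau)^+$ with $\varphi_\tau(0)\le1$, so that $-\partial_t\varphi_\tau$ approximates $\frac1\tau\mathbf{1}_{[0,\tau]}$. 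Taking expectations removes the martingale, the term $-\eta_R''(u)|\nabla\llbracket\zeta\rrbracket(u)|^2$ has a favourable sign, and the remaining contributions are dominated by an $R$-independent $\bar r\in L_1(0,T)$ (built from $(1+|u|)^2$, $|G(\cdot,\cdot,0)|^2$, and $\eta_R''(u)|\nabla\llbracket\zeta\rrbracket(u)|^2\le|\nabla\llbracket\sqrt2\,\zeta\rrbracket(u)|^2\in L_1$ by Definition \ref{def:entropy solution without}(ii)); after sending the mollification parameter to zero this gives $\frac1\tau\int_0^\tau\mathbb{E}\int_{\mathbb{T}^d}\eta_R(u(t))\,dx\,dt\le\mathbb{E}\|\xi\|_{L_2(\mathbb{T}^d)}^2+\int_0^\tau\bar r(s)\,ds$. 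Letting $R\to\infty$ (monotone convergence, $\eta_R(r)\uparrow r^2$) and then $\tau\to0^+$ yields the first assertion. The cross term is handled by combining the weak formulation with the smoothing $\xi_k:=\rho_{1/k}^{\otimes d}\ast\big((-k)\vee(\xi\land k)\big)$ (cf. \eqref{defn for xi_n}), which is bounded, $\mathcal{F}_0$-measurable and converges to $\xi$ in $L_2(\Omega;L_2(\mathbb{T}^d))$: projecting $\xi_k$ onto $\mathrm{span}\{e_1,\dots,e_N\}$ and applying the conditional identity above term by term (boundedness of $\xi_k$ makes each coefficient deterministically bounded, so $\mathbb{E}|\rho_t(\cdot)|\to0$ suffices) gives $\frac1\tau\int_0^\tau\mathbb{E}\langle u(t),\xi_{k,N}\rangle\,dt\to\mathbb{E}\langle\xi_{k,N},\xi\rangle$; the approximation errors $\xi\mapsto\xi_{k,N}$ are absorbed uniformly in small $\tau$ via Cauchy--Schwarz and the energy bound just proved, and letting $N,k\to\infty$ yields $\liminf_{\tau}\frac1\tau\int_0^\tau\mathbb{E}\langle u(t),\xi\rangle\,dt\ge\mathbb{E}\|\xi\|_{L_2}^2$, while the matching $\limsup$ is immediate from Cauchy--Schwarz together with the energy bound.

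The main obstacle is the interaction of the genuinely random datum $\xi$ with the entropy and weak formulations, whose admissible test functions are deterministic: the convergence $\langle u(t),\xi\rangle\to\|\xi\|_{L_2}^2$ in time-average cannot be obtained by ``testing against $\xi$'' directly, and has to be routed through the smoothing-and-projection of $\xi$ and conditioning on $\mathcal{F}_0$ to annihilate the stochastic integral, with the energy estimate used to control the smoothing error uniformly in $\tau$. A secondary technical point is keeping the bound on the truncated quadratic entropies uniform in $R$, which is what allows the order of the limits $R\to\infty$ and $\tau\to0^+$ to be interchanged.
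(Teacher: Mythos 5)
Your proposal is essentially sound, but it does not follow the paper's route, because the paper offers no argument at all for Lemma \ref{lem:appro initial}: it simply remarks that the proof of \cite[Lemma 3.2]{dareiotis2019entropy} carries over once $F$ and $G$ are Lipschitz, and that argument stays entirely inside the entropy formulation, working with entropies of the form $\eta_{\delta}(\cdot-a)$, substituting the (mollified) random initial datum for the constant $a$ ``by convolution'' exactly as in Lemma \ref{lem:Lemma for L1}, and using test functions concentrated near $t=0$; the $\mathcal{F}_{0}$-measurability of $\xi$ is what keeps the stochastic integral centred there. You instead first upgrade the entropy inequality to a weak It\^o identity via the affine entropies $\eta(r)=\pm r$ (a step the paper itself performs in the proof of Theorem \ref{thm:maintheorem}), prove the averaged energy bound $\limsup_{\tau\to0^+}\frac1\tau\int_0^\tau\mathbb{E}\|u(t)\|_{L_2(\mathbb{T}^d)}^2dt\le\mathbb{E}\|\xi\|_{L_2(\mathbb{T}^d)}^2$ by testing with truncated quadratics $\eta_R$ and $\varrho\equiv1$, and recover the cross term by conditioning on $\mathcal{F}_{0}$ against finite-dimensional projections of the truncated mollification $\xi_k$, concluding with the standard ``weak convergence plus convergence of norms'' upgrade; this correctly isolates the same key mechanism ($\mathcal{F}_{0}$-measurability of $\xi$ killing the martingale part). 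Your route is more elementary in that it avoids doubling of variables, Kruzhkov entropies and the continuity of translations, but it leans on the $L_2$-integrability of $u$ and $\xi$ (available here since $m+1>2$), whereas the cited entropy-formulation argument adapts to lower-integrability and $L_1$-type initial-layer statements, which is why the paper can dispatch the lemma by citation. If you write your version up, the points that deserve explicit care are routine but should not be skipped: extending the a.s.\ identity from fixed non-negative $\phi=\varphi\varrho$ to general $\varrho$ and to the semimartingale form with a single null set (countable family of test functions plus continuity of the right-hand side in $t$), verifying that the stochastic integrals are genuine martingales (bounded $\eta_R'$, linear growth of $\sigma$, $u\in L_2(\Omega_T;L_2(\mathbb{T}^d))$), and exhibiting $\eta_R\in\mathcal{E}$ with $\eta_R(r)=r^2$ for $|r|\le R$, $0\le\eta_R\le r^2$ and $0\le\eta_R''\le2$ so that the error terms are dominated uniformly in $R$ and the limits $R\to\infty$, $\tau\to0^+$ can be taken in the stated order.
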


The proof of Lemma \ref{lem:appro initial} is similar to that of
\cite[Lemma 3.2]{dareiotis2019entropy} under the Lipschitz continuity
of $F$ and $G$. Therefore, we omit the proof here.
\begin{lem}
\label{lem:L1}Let Assumptions \ref{assu:assumption for phi}-\ref{assu:inhomogeneous function}
hold for both $(\Phi,F,\sigma,\xi)$ and $(\tilde{\Phi},F,\sigma,\tilde{\xi})$.
Let $G(t,x,r)$ and $\tilde{G}(t,x,r)$ be two functions, which are
Lipschitz continuous in $r$ with Lipschitz constant $\bar{K}$, and
satisfy $G(\cdot,\cdot,0),\tilde{G}(\cdot,\cdot,0)\in\ensuremath{L_{2}(\Omega_{T};L_{2}(\mathbb{T}^{d}))}$.
Suppose that $u$ and $\tilde{u}$ are entropy solutions of $\Pi(\Phi,F-G,\xi)$
and $\Pi(\tilde{\Phi},F-\tilde{G},\tilde{\xi})$, respectively. If
$u$ has the $(\star)$-property, then the following two assertions
are true:

(i) if furthermore $\Phi=\tilde{\Phi}$, then for all $\varsigma,\delta\in(0,1]$
and $\alpha\in(0,1\land(m/2))$, we have
\begin{align*}
 & \mathbb{E}\int_{x,y}(u(\tau,x)-\tilde{u}(\tau,y))^{+}\varrho_{\varsigma}(x-y)\\
 & \leq\mathbb{E}\int_{x,y}(\xi(x)-\tilde{\xi}(y))^{+}\varrho_{\varsigma}(x-y)\\
 & \quad+C\mathfrak{C}(\varsigma,\delta)\mathbb{E}\left(1+\left\Vert u\right\Vert _{L_{m+1}(Q_{T})}^{m+1}+\left\Vert \tilde{u}\right\Vert _{L_{m+1}(Q_{T})}^{m+1}\right)\\
 & \quad+C\mathbb{E}\int_{0}^{\tau}\int_{x,y}(u(t,x)-\tilde{u}(t,y))^{+}\varrho_{\varsigma}(x-y)dt\\
 & \quad+C\mathbb{E}\int_{0}^{\tau}\int_{x,y}\mathbf{1}_{\{u(t,x)>\tilde{u}(t,y)\}}(\tilde{G}(t,y,\tilde{u}(t,y))-G(t,x,u(t,x)))^{+}\varrho_{\varsigma}(x-y)dt,
\end{align*}
where
\[
\mathfrak{C}(\varsigma,\delta)=\delta^{2\kappa}+\varsigma^{\bar{\kappa}}+\varsigma^{-2}\delta^{2\alpha}.
\]

(ii) for all $\varsigma,\delta\in(0,1]$, $\lambda\in[0,1]$ and $\alpha\in(0,1\land(m/2))$,
we have
\begin{align*}
 & \mathbb{E}\int_{t,x}(u(t,x)-\tilde{u}(t,x))^{+}\\
 & \leq C\mathbb{E}\int_{x}(\xi(x)-\tilde{\xi}(x))^{+}+C\sup_{|h|\leq\varsigma}\mathbb{E}\left\Vert \tilde{\xi}(\cdot)-\tilde{\xi}(\cdot+h)\right\Vert _{L_{1}(\mathbb{T}^{d})}\\
 & \quad+C\varsigma^{-2}\left(\mathbb{E}\left\Vert \mathbf{1}_{\{\left|u\right|\geq R_{\lambda}\}}(1+\left|u\right|)\right\Vert _{L_{m}(Q_{T})}^{m}+\mathbb{E}\left\Vert \mathbf{1}_{\{\left|\tilde{u}\right|\geq R_{\lambda}\}}(1+\left|\tilde{u}\right|)\right\Vert _{L_{m}(Q_{T})}^{m}\right)\\
 & \quad+C\mathfrak{C}(\varsigma,\delta,\lambda)\mathbb{E}\left(1+\left\Vert u\right\Vert _{L_{m+1}(Q_{T})}^{m+1}+\left\Vert \tilde{u}\right\Vert _{L_{m+1}(Q_{T})}^{m+1}\right)\\
 & \quad+C\varsigma^{\frac{2}{m+1}}\left(1+\mathbb{E}\left\Vert \nabla\left\llbracket \zeta\right\rrbracket (u)\right\Vert _{L_{1}(Q_{T})}\right)\\
 & \quad+C\mathbb{E}\int_{t,x,y}\mathbf{1}_{\{u(t,x)>\tilde{u}(t,y)\}}(\tilde{G}(t,y,\tilde{u}(t,y))-G(t,x,u(t,x)))^{+}\varrho_{\varsigma}(x-y),
\end{align*}
where
\[
\mathfrak{C}(\varsigma,\delta,\lambda)=\delta^{2\kappa}+\varsigma^{\bar{\kappa}}+\varsigma^{-2}\lambda^{2}+\varsigma^{-2}\delta^{2\alpha},
\]
\[
R_{\lambda}=\sup\{R\in[0,\infty]:|\zeta(r)-\tilde{\zeta}(r)|\leq\lambda,\forall|r|<R\},
\]
and the constant $C$ depends only on $N_{0}$, $K$, $d$, $T$ and
$\phi$. 
\end{lem}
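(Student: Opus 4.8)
The plan is to derive both assertions from the intermediate inequality \eqref{eq:intermitineq} of Lemma \ref{lem:Lemma for L1} by integrating in the time variable, extracting a time slice and the initial datum with the help of Lemma \ref{lem:appro initial}, and, for part (ii), removing the spatial mollifier $\varrho_\varsigma$ through Lemma \ref{lem:change for u} once a Gr\"onwall loop has been closed. The genuinely hard analytical work (the Kruzhkov-type doubling in space and time, and the use of the $(\star)$-property) has already been done in Lemma \ref{lem:Lemma for L1}, so what remains is essentially bookkeeping.

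For part (i), I would first observe that $\Phi=\tilde\Phi$ forces $\zeta=\tilde\zeta$, hence $R_\lambda=\infty$ for every $\lambda\in[0,1]$ and the two $L_m$-terms in \eqref{eq:intermitineq} vanish; taking $\lambda=0$ also kills the $\varsigma^{-2}\lambda^2$ term, leaving exactly the coefficient $\mathfrak{C}(\varsigma,\delta)=\delta^{2\kappa}+\varsigma^{\bar\kappa}+\varsigma^{-2}\delta^{2\alpha}$. Next, fix a Lebesgue point $\tau\in(0,T)$ of the map $t\mapsto\mathbb{E}\int_{x,y}(u(t,x)-\tilde u(t,y))^+\varrho_\varsigma(x-y)$, which is integrable since $u,\tilde u\in L_{m+1}(\Omega_T;L_{m+1}(\mathbb{T}^d))$, and test \eqref{eq:intermitineq} with a family $\varphi_\beta\in C_c^\infty((0,T))$ that ramps up from $0$ to $1$ on $[\beta,2\beta]$, equals $1$ on $[2\beta,\tau]$, and ramps down to $0$ on $[\tau,\tau+\beta]$. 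Since $\|\varphi_\beta\|_{L_\infty(0,T)}\le1$ and $\|\partial_t\varphi_\beta\|_{L_1(0,T)}\le2$, one applies the lemma to $\varphi_\beta/2$ and multiplies through by $2$, the constant there being independent of the test function. Letting $\beta\to0^+$, the left-hand side tends to $\mathbb{E}\int_{x,y}(u(\tau,x)-\tilde u(\tau,y))^+\varrho_\varsigma(x-y)-\mathbb{E}\int_{x,y}(\xi(x)-\tilde\xi(y))^+\varrho_\varsigma(x-y)$: the slice at $\tau$ comes from the Lebesgue point property, and the initial term from Lemma \ref{lem:appro initial} applied to both $u$ and $\tilde u$, using the $1$-Lipschitz bound $|(a-b)^+-(a'-b')^+|\le|a-a'|+|b-b'|$ and Cauchy--Schwarz in $t$ to pass from the $L_2$-control of Lemma \ref{lem:appro initial} to $L_1$-control in the averages $\tfrac{1}{\beta}\int_\beta^{2\beta}$ and $\tfrac{1}{\beta}\int_\tau^{\tau+\beta}$. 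On the right-hand side $\varphi_\beta\to\mathbf{1}_{(0,\tau)}$ boundedly, so the $\varphi$-weighted terms converge to the asserted integrals over $[0,\tau]$. This gives (i).

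For part (ii), I would run the same argument but keep $\lambda\in[0,1]$ general, obtaining for a.e.\ $\tau\in(0,T)$ the estimate $g(\tau)\le g(0)+A(\varsigma,\delta,\lambda)+C\int_0^\tau g(t)\,dt$, where $g(\tau):=\mathbb{E}\int_{x,y}(u(\tau,x)-\tilde u(\tau,y))^+\varrho_\varsigma(x-y)$, $g(0):=\mathbb{E}\int_{x,y}(\xi(x)-\tilde\xi(y))^+\varrho_\varsigma(x-y)$, and $A(\varsigma,\delta,\lambda)$ collects the $\varsigma^{-2}$-weighted $L_m$-terms, the $\mathfrak{C}(\varsigma,\delta,\lambda)$-term, and $C\,\mathbb{E}\int_{t,x,y}\mathbf{1}_{\{u>\tilde u\}}(\tilde G-G)^+\varrho_\varsigma$, where I bound $\int_0^\tau$ by $\int_0^T$ to make $A$ a genuine constant. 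Gr\"onwall's inequality then gives $\sup_\tau g(\tau)\le C(g(0)+A(\varsigma,\delta,\lambda))$, and integrating in $\tau$ over $[0,T]$ yields $\mathbb{E}\int_{t,x,y}(u-\tilde u)^+\varrho_\varsigma\le C(g(0)+A(\varsigma,\delta,\lambda))$. It remains to undo the mollification on both sides: using $\int_x\varrho_\varsigma(x-y)\,dx=1$ for small $\varsigma$ together with $(u(t,y)-\tilde u(t,y))^+\le(u(t,x)-\tilde u(t,y))^++|u(t,x)-u(t,y)|$ (after relabelling $x\leftrightarrow y$ on the left), one obtains $\mathbb{E}\int_{t,x}(u(t,x)-\tilde u(t,x))^+\le\mathbb{E}\int_{t,x,y}(u(t,x)-\tilde u(t,y))^+\varrho_\varsigma(x-y)+\mathbb{E}\int_{t,x,y}|u(t,x)-u(t,y)|\varrho_\varsigma(x-y)$, and the last term is $\le C\varsigma^{2/(m+1)}(1+\mathbb{E}\|\nabla\llbracket\zeta\rrbracket(u)\|_{L_1(Q_T)})$ by Lemma \ref{lem:change for u}. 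Likewise, $(\xi(x)-\tilde\xi(y))^+\le(\xi(x)-\tilde\xi(x))^++|\tilde\xi(x)-\tilde\xi(y)|$ and $\int_y\varrho_\varsigma(x-y)\,dy=1$ give $g(0)\le\mathbb{E}\int_x(\xi(x)-\tilde\xi(x))^++\sup_{|h|\le\varsigma}\mathbb{E}\|\tilde\xi(\cdot)-\tilde\xi(\cdot+h)\|_{L_1(\mathbb{T}^d)}$. Collecting all the bounds yields (ii).

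The point that needs care is the de-doubling in part (ii): one must move $u(t,y)$ --- not $\tilde u(t,y)$ --- back to $u(t,x)$, so that it is the $\nabla\llbracket\zeta\rrbracket$-regularity of $u$ (the solution carrying the $(\star)$-property, and whose regularity we control through Definition \ref{def:entropy solution without}(ii)) that enters the final estimate; and the passage $\beta\to0^+$ in the test function must use Lemma \ref{lem:appro initial} to dominate the initial layer uniformly. Everything else is a routine accounting of the terms already produced by Lemma \ref{lem:Lemma for L1}.
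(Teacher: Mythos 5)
Your argument is correct and follows essentially the same route as the paper's proof: both deduce the result from the intermediate inequality of Lemma \ref{lem:Lemma for L1} by testing with smooth approximations of time-indicator profiles, recover the initial datum through Lemma \ref{lem:appro initial}, take $\lambda=0$ and $R_\lambda=\infty$ (using $\zeta=\tilde{\zeta}$) for assertion (i), and close assertion (ii) with Gr\"onwall's inequality plus the de-doubling via Lemma \ref{lem:change for u} applied to $u$ together with the translation term for $\tilde{\xi}$. The only differences are cosmetic: you ramp the test function up adjacent to $t=0$ and treat the initial layer in a single limit (the paper ramps at a generic Lebesgue point $s$, lets $\gamma\rightarrow0^{+}$, and then averages $s$ toward $0$), and you apply Gr\"onwall to the slice function $g(\tau)$ rather than to its time integral.
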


\begin{proof}
Let $s,\tau\in(0,T)$ with $s<\tau$, be Lebesgue points of the function
\[
t\mapsto\mathbb{E}\int_{x,y}(u(t,x)-\tilde{u}(t,y))^{+}\varrho_{\varsigma}(x-y).
\]
Fix a constant $\gamma\in(0,\max\{\tau-s,T-\tau\})$. We choose a
sequence of functions $\{\varphi_{n}\}_{n\in\mathbb{N}}$ satisfying
$\varphi_{n}\in C_{c}^{\infty}((0,T))$ and $\left\Vert \varphi_{n}\right\Vert _{L_{\infty}(0,T)}\lor\left\Vert \partial_{t}\varphi_{n}\right\Vert _{L_{1}(0,T)}\leq1$,
such that
\[
\lim_{n\rightarrow\infty}\left\Vert \varphi_{n}-V_{\gamma}\right\Vert _{H_{0}^{1}((0,T))}=0,
\]
where $V_{\gamma}:[0,T]\rightarrow\mathbb{R}$ satisfies $V_{\gamma}(0)=0$
and $V_{\gamma}^{\prime}=\gamma^{-1}\mathbf{1}_{[s,s+\gamma]}-\gamma^{-1}\mathbf{1}_{[\tau,\tau+\gamma]}$.
Substituting $\varphi$ with $\varphi_{n}$ in (\ref{eq:intermitineq})
and taking the limit $n\rightarrow\infty$, we have
\begin{align*}
 & \frac{1}{\gamma}\mathbb{E}\int_{\tau}^{\tau+\gamma}\int_{x,y}(u(t,x)-\tilde{u}(t,y))^{+}\varrho_{\varsigma}(x-y)dt\\
 & \leq C\varsigma^{-2}\left(\mathbb{E}\left\Vert \mathbf{1}_{\{\left|u\right|\geq R_{\lambda}\}}(1+\left|u\right|)\right\Vert _{L_{m}(Q_{T})}^{m}+\mathbb{E}\left\Vert \mathbf{1}_{\{\left|\tilde{u}\right|\geq R_{\lambda}\}}(1+\left|\tilde{u}\right|)\right\Vert _{L_{m}(Q_{T})}^{m}\right)\\
 & \quad+C\left(\delta^{2\kappa}+\varsigma^{\bar{\kappa}}+\varsigma^{-2}\lambda^{2}+\varsigma^{-2}\delta^{2\alpha}\right)\cdot\mathbb{E}\left(1+\left\Vert u\right\Vert _{L_{m+1}(Q_{T})}^{m+1}+\left\Vert \tilde{u}\right\Vert _{L_{m+1}(Q_{T})}^{m+1}\right)\\
 & \quad+C\mathbb{E}\int_{0}^{\tau+\gamma}\int_{x,y}(u(t,x)-\tilde{u}(t,y))^{+}\varrho_{\varsigma}(x-y)dt\\
 & \quad+C\mathbb{E}\int_{0}^{\tau+\gamma}\int_{x,y}\mathbf{1}_{\{u(t,x)>\tilde{u}(t,y)\}}(\tilde{G}(t,y,\tilde{u}(t,y))-G(t,x,u(t,x)))^{+}\varrho_{\varsigma}(x-y)dt\\
 & \quad+\frac{1}{\gamma}\mathbb{E}\int_{s}^{s+\gamma}\int_{x,y}(u(t,x)-\tilde{u}(t,y))^{+}\varrho_{\varsigma}(x-y)dt\\
 & =:M(\gamma)+\frac{1}{\gamma}\mathbb{E}\int_{s}^{s+\gamma}\int_{x,y}(u(t,x)-\tilde{u}(t,y))^{+}\varrho_{\varsigma}(x-y)dt.
\end{align*}
Let $\gamma\rightarrow0^{+}$, we have
\[
\mathbb{E}\int_{x,y}(u(\tau,x)-\tilde{u}(\tau,y))^{+}\varrho_{\varsigma}(x-y)\leq M(0)+\mathbb{E}\int_{x,y}(u(s,x)-\tilde{u}(s,y))^{+}\varrho_{\varsigma}(x-y)
\] 
holds for almost all $s\in(0,\tau)$. Then, for each $\tilde{\gamma}\in(0,\tau),$
by averaging over $s\in(0,\tilde{\gamma})$, we have
\begin{align*}
 & \mathbb{E}\int_{x,y}(u(\tau,x)-\tilde{u}(\tau,y))^{+}\varrho_{\varsigma}(x-y)\\
 & \leq M(0)+\frac{1}{\tilde{\gamma}}\mathbb{E}\int_{0}^{\tilde{\gamma}}\int_{x,y}(u(s,x)-\tilde{u}(s,y))^{+}\varrho_{\varsigma}(x-y)ds.
\end{align*}
Taking the limit $\tilde{\gamma}\rightarrow0^{+}$ and using Lemma
\ref{lem:appro initial}, we have
\begin{equation}
\mathbb{E}\int_{x,y}(u(\tau,x)-\tilde{u}(\tau,y))^{+}\varrho_{\varsigma}(x-y)\leq M(0)+\mathbb{E}\int_{x,y}(\xi(x)-\tilde{\xi}(y))^{+}\varrho_{\varsigma}(x-y).\label{eq:ineq_givenphis} 
\end{equation}
Taking $\lambda=0$ and $R_{\lambda}=\infty$, we obtain the desired
inequality in (i).

For (ii), we fixed $s_{1}\in(0,T]$. By integrating inequality (\ref{eq:ineq_givenphis})
over $\tau\in(0,s_{1})$, we have
\begin{align*}
 & \mathbb{E}\int_{0}^{s_{1}}\int_{x,y}(u(\tau,x)-\tilde{u}(\tau,y))^{+}\varrho_{\varsigma}(x-y)d\tau\\
 & \leq T\mathbb{E}\int_{x}(\xi(x)-\tilde{\xi}(x))^{+}+T\sup_{|h|\leq\varsigma}\mathbb{E}\left\Vert \tilde{\xi}(\cdot)-\tilde{\xi}(\cdot+h)\right\Vert _{L_{1}(\mathbb{T}^{d})}\\
 & \quad+C\varsigma^{-2}\Bigg(\mathbb{E}\left\Vert \mathbf{1}_{\{\left|u\right|\geq R_{\lambda}\}}(1+\left|u\right|)\right\Vert _{L_{m}(Q_{T})}^{m}+\mathbb{E}\left\Vert \mathbf{1}_{\{\left|\tilde{u}\right|\geq R_{\lambda}\}}(1+\left|\tilde{u}\right|)\right\Vert _{L_{m}(Q_{T})}^{m}\Bigg)\\
 & \quad+C\left(\delta^{2\kappa}+\varsigma^{\bar{\kappa}}+\varsigma^{-2}\lambda^{2}+\varsigma^{-2}\delta^{2\alpha}\right)\cdot\mathbb{E}\Big(1+\left\Vert u\right\Vert _{L_{m+1}(Q_{T})}^{m+1}+\left\Vert \tilde{u}\right\Vert _{L_{m+1}(Q_{T})}^{m+1}\Big)\\
 & \quad+C\mathbb{E}\int_{0}^{s_{1}}\int_{0}^{\tau}\int_{x,y}(u(t,x)-\tilde{u}(t,y))^{+}\varrho_{\varsigma}(x-y)dtd\tau\\
 & \quad+C\mathbb{E}\int_{t,x,y}\mathbf{1}_{\{u(t,x)>\tilde{u}(t,y)\}}(\tilde{G}(t,y,\tilde{u}(t,y))-G(t,x,u(t,x)))^{+}\varrho_{\varsigma}(x-y),
\end{align*}
Moreover, Using Lemma \ref{lem:change for u} and Gr\"onwall's inequality,
we have Assertion (ii).
\end{proof}

\section{Existence of solution\label{sec:Existence-of-solution}}

We have already obtained a priori estimates and properties of $L_{2}$-solution
$u_{n,\epsilon}$ to $\Pi(\Phi_{n},F-G_{\epsilon}(\cdot,S),\xi_{n})$,
and gotten $L_{1}^{+}$ estimate of two different entropy solutions
based on $(\star)$-property. Applying these results, we now prove
the existence of entropy solution $(u,\nu)$ of the obstacle problem
$\Pi_{S}(\Phi,F,\xi)$ in two steps: Firstly, we take the limit $n\rightarrow\infty$
to prove the existence and comparison theorem of the entropy solution
$u_{\epsilon}$ of $\Pi(\Phi,F-G_{\epsilon}(\cdot,S),\xi)$. Then,
these results indicate the existence of the entropy solution of the
obstacle problem $\Pi_{S}(\Phi,F,\xi)$ when $\epsilon\rightarrow0^{+}$.

Fix $\epsilon>0$, for any $n,n^{\prime}\in\mathbb{N}$, suppose that
$u_{n,\epsilon}$ and $u_{n^{\prime},\epsilon}$ are $L_{2}$-solutions
of $\Pi(\Phi_{n},F-G_{\epsilon}(\cdot,S),\xi_{n})$ and $\Pi(\Phi_{n^{\prime}},F-G_{\epsilon}(\cdot,S),\xi_{n^{\prime}})$,
respectively. Then, Remark \ref{rem:L2equalentropy} shows that they
are also entropy solutions of the corresponding equations. Using Theorem
\ref{thm:star property}, we know that both $u_{n,\epsilon}$ and
$u_{n^{\prime},\epsilon}$ have the $(\star)$-property, which is
uniform in $n$ and $\epsilon$ if $\mathbb{E}\left\Vert \xi\right\Vert _{L_{2}(\mathbb{T}^{d})}^{4}<\infty$. 
\begin{thm}
\label{The:ex and uni for u_e}Let Assumptions \ref{assu:assumption for phi}-\ref{assu:Assuption for barrier}
hold. Then, for fixed $\epsilon>0$, the equation $\Pi(\Phi,F-G_{\epsilon}(\cdot,S),\xi)$
has an entropy solution $u_{\epsilon}$. Moreover, there exists a
constant $C$ independent of $\epsilon$ such that
\begin{align}
&\mathbb{E}\sup_{t\leq T}\left\Vert u_{\epsilon}(t)\right\Vert _{L_{2}(\mathbb{T}^{d})}^{p}+\mathbb{E}\left\Vert \nabla\left\llbracket \zeta\right\rrbracket (u_{\epsilon})\right\Vert _{L_{2}(Q_{T})}^{p}+(\frac{1}{\epsilon})^{p/2}\mathbb{E}\left\Vert (u_{\epsilon}-S)^{+}\right\Vert _{L_{2}(Q_{T})}^{p}\label{eq:u_e priori estimate1}\\
&\leq C\left(1+\mathbb{E}\left\Vert \xi\right\Vert _{L_{2}(\mathbb{T}^{d})}^{p}\right),\nonumber
\end{align}
\begin{equation}
\mathbb{E}\sup_{t\leq T}\left\Vert u_{\epsilon}(t)\right\Vert _{L_{m+1}(\mathbb{T}^{d})}^{m+1}\leq C\left(1+\mathbb{E}\left\Vert \xi\right\Vert _{L_{m+1}(\mathbb{T}^{d})}^{m+1}\right),\quad\text{and}\label{eq:u_e priori estimate2}
\end{equation}
\begin{equation}
\frac{1}{\epsilon^{2}}\mathbb{E}\int_{0}^{T}\left\Vert (u_{\epsilon}-S)^{+}(t)\right\Vert _{L_{2}(\mathbb{T}^{d})}^{2}dt\leq C\left(1+\mathbb{E}\left\Vert \xi\right\Vert _{L_{2}(\mathbb{T}^{d})}^{2}\right).\label{eq:nu priori estimate}
\end{equation}
\end{thm}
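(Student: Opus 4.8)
The plan is to pass to the limit $n\to\infty$ in the family $u_{n,\epsilon}$, $n\in\mathbb{N}$, of $L_2$-solutions of $\Pi(\Phi_n,F-G_\epsilon(\cdot,S),\xi_n)$ supplied by Theorem \ref{thm:exuni_approximation equation}; by Remark \ref{rem:L2equalentropy} these are entropy solutions, they satisfy the $n$-uniform bounds of Theorems \ref{thm:esimate for u_n,epsilon} and \ref{thm:estimate for L2 for G}, and by Theorem \ref{thm:star property} each has the $(\star)$-property. First I apply the $L_1^+$ estimate of Lemma \ref{lem:L1}(ii) to the pair $(u,\tilde u)=(u_{n,\epsilon},u_{n',\epsilon})$ with $G=\tilde G=G_\epsilon(\cdot,S)$, which is Lipschitz in $r$ with constant $1/\epsilon$ and satisfies $G_\epsilon(0,S(t))\equiv0$ because $S\ge0$. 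The key point is that the last (penalty-difference) term on the right of Lemma \ref{lem:L1}(ii) vanishes: on $\{u_{n,\epsilon}(t,x)>u_{n',\epsilon}(t,y)\}$ the monotonicity of $r\mapsto(r-S(t))^+$ yields $G_\epsilon(u_{n',\epsilon}(t,y),S(t))\le G_\epsilon(u_{n,\epsilon}(t,x),S(t))$, so its positive part is $0$. Since $\Phi_n$ satisfies Assumption \ref{assu:assumption for phi} with constant $3K$ and $\sup_{|r|\le n}|\zeta(r)-\zeta_n(r)|\le 4/n$ by Proposition \ref{prop:Assumptio for coef}, taking $\lambda=\lambda_{n,n'}:=8/(n\wedge n')$ gives $R_{\lambda_{n,n'}}\ge n\wedge n'\to\infty$, so the $n$-uniform $L_{m+1}(Q_T)$ bound \eqref{eq:priori estimate2} forces $\varsigma^{-2}\mathbb{E}\|\mathbf{1}_{\{|u_{n,\epsilon}|\ge R_\lambda\}}(1+|u_{n,\epsilon}|)\|_{L_m(Q_T)}^m\to0$ and $\varsigma^{-2}\lambda_{n,n'}^2\to0$; moreover, since truncation and mollification are translation-equivariant $L_1$-contractions, $\xi_n\to\xi$ in $L_1(\Omega;L_1(\mathbb{T}^d))$ while $\sup_{|h|\le\varsigma}\mathbb{E}\|\xi_n(\cdot)-\xi_n(\cdot+h)\|_{L_1(\mathbb{T}^d)}\le\sup_{|h|\le\varsigma}\mathbb{E}\|\xi(\cdot)-\xi(\cdot+h)\|_{L_1(\mathbb{T}^d)}\to0$ as $\varsigma\to0^+$. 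Hence, letting $n,n'\to\infty$, then $\delta\to0^+$, then $\varsigma\to0^+$ in Lemma \ref{lem:L1}(ii) (and symmetrising $u\leftrightarrow\tilde u$, as $u_{n',\epsilon}$ also has the $(\star)$-property), I obtain $\limsup_{n,n'\to\infty}\mathbb{E}\int_{Q_T}|u_{n,\epsilon}-u_{n',\epsilon}|\,dx\,dt=0$, so $\{u_{n,\epsilon}\}_n$ is Cauchy in $L_1(\Omega_T\times\mathbb{T}^d)$; I denote its limit by $u_\epsilon$ and pass to a subsequence along which $u_{n,\epsilon}\to u_\epsilon$ a.e. on $\Omega_T\times\mathbb{T}^d$.

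Next I verify that $u_\epsilon$ is an entropy solution of $\Pi(\Phi,F-G_\epsilon(\cdot,S),\xi)$. For part (ii) of Definition \ref{def:entropy solution without}, given $f\in C_b(\mathbb{R})$ the sequence $\llbracket\zeta_n f\rrbracket(u_{n,\epsilon})$ is bounded in $L_2(\Omega_T;H^1(\mathbb{T}^d))$ — its gradient is $f(u_{n,\epsilon})\nabla\llbracket\zeta_n\rrbracket(u_{n,\epsilon})$, bounded in $L_2$ by \eqref{eq:priori estimate1}, while $|\llbracket\zeta_n f\rrbracket(r)|\le C(1+|r|^{(m+1)/2})$ is bounded in $L_2$ by \eqref{eq:priori estimate2}; its weak $H^1$-limit equals $\llbracket\zeta f\rrbracket(u_\epsilon)$, identified from the a.e. convergence $\llbracket\zeta_n f\rrbracket(u_{n,\epsilon})\to\llbracket\zeta f\rrbracket(u_\epsilon)$ (using $\zeta_n\to\zeta$ locally uniformly) together with the $L_1$-uniform integrability of the sequence; and since $f(u_{n,\epsilon})\to f(u_\epsilon)$ boundedly a.e. while $\nabla\llbracket\zeta_n\rrbracket(u_{n,\epsilon})\rightharpoonup\nabla\llbracket\zeta\rrbracket(u_\epsilon)$ weakly in $L_2$, the identity $\partial_{x_i}\llbracket\zeta f\rrbracket(u_\epsilon)=f(u_\epsilon)\partial_{x_i}\llbracket\zeta\rrbracket(u_\epsilon)$ passes to the limit. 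For part (iii) I pass to the limit in the entropy inequality \eqref{eq:entropy formula withou} written for $u_{n,\epsilon}$: the terms of at most affine growth in $u_{n,\epsilon}$ — those with $\eta(u_{n,\epsilon})$, $\eta(\xi_n)$, $\llbracket\zeta_n^2\eta'\rrbracket(u_{n,\epsilon})$ (for which $|\llbracket\zeta_n^2\eta'\rrbracket(r)|\le C(1+|r|^m)$), $\eta'(u_{n,\epsilon})\big(F(t,x,u_{n,\epsilon})-G_\epsilon(u_{n,\epsilon},S)\big)$, $\eta''(u_{n,\epsilon})\sum_k|\sigma^k(u_{n,\epsilon})|^2$, and the It\^o term — converge to the corresponding terms with $u_\epsilon,\xi,\zeta$ by Vitali's theorem, using a.e. convergence, $\zeta_n\to\zeta$ locally uniformly, the compact support of $\eta''$, the growth bounds $|\sigma(r)|_{l_2}\le K(1+|r|)$, $|F(t,x,r)|\le C(1+|r|)$, $0\le G_\epsilon(r,S)\le|r|/\epsilon$, and the $n$-uniform $L_{m+1}(Q_T)$ bound, which renders these integrands uniformly integrable (for the stochastic term one also uses It\^o's isometry).

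The main obstacle is the dissipation term $-\mathbb{E}\int_{Q_T}\eta''(u_{n,\epsilon})|\nabla\llbracket\zeta_n\rrbracket(u_{n,\epsilon})|^2\phi\,dx\,dt$, which appears on the right of \eqref{eq:entropy formula withou} with a minus sign, so naive lower semicontinuity points the wrong way. Using part (ii), I rewrite $\eta''(u)|\nabla\llbracket\zeta_n\rrbracket(u)|^2=|\nabla\llbracket\zeta_n(\eta'')^{1/2}\rrbracket(u)|^2$; the argument of the previous paragraph applied with $f=(\eta'')^{1/2}\in C_b(\mathbb{R})$ gives $\nabla\llbracket\zeta_n(\eta'')^{1/2}\rrbracket(u_{n,\epsilon})\rightharpoonup\nabla\llbracket\zeta(\eta'')^{1/2}\rrbracket(u_\epsilon)$ weakly in $L_2(\Omega_T\times\mathbb{T}^d)$, and since $\phi\ge0$ the weak lower semicontinuity of the nonnegatively-weighted quadratic functional $w\mapsto\mathbb{E}\int_{Q_T}\phi\,|w|^2$ yields $\liminf_n\mathbb{E}\int_{Q_T}\eta''(u_{n,\epsilon})|\nabla\llbracket\zeta_n\rrbracket(u_{n,\epsilon})|^2\phi\ge\mathbb{E}\int_{Q_T}\eta''(u_\epsilon)|\nabla\llbracket\zeta\rrbracket(u_\epsilon)|^2\phi$. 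As the left-hand side of \eqref{eq:entropy formula withou} converges and every other right-hand term converges, passing to $\limsup_n$ in the inequality delivers \eqref{eq:entropy formula withou} for $u_\epsilon$; thus $u_\epsilon$ is an entropy solution of $\Pi(\Phi,F-G_\epsilon(\cdot,S),\xi)$.

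Finally, the a priori estimates follow by propagating those of Theorems \ref{thm:esimate for u_n,epsilon} and \ref{thm:estimate for L2 for G}. The $\|(u_\epsilon-S)^+\|_{L_2(Q_T)}$, $\|u_\epsilon\|_{L_{m+1}(Q_T)}$ and $\|u_\epsilon\|_{L_2(Q_T)}$ parts of \eqref{eq:u_e priori estimate1}--\eqref{eq:u_e priori estimate2} and estimate \eqref{eq:nu priori estimate} come from Fatou's lemma along the a.e.-convergent subsequence; the term $\mathbb{E}\|\nabla\llbracket\zeta\rrbracket(u_\epsilon)\|_{L_2(Q_T)}^p$ comes from weak lower semicontinuity of the norm of $L_p(\Omega;L_2(Q_T))$ (reflexive for $p\in[2,\infty)$), the weak limit having been identified above; and the $\mathbb{E}\sup_{t\le T}$ norms propagate by the standard time-regularity argument for entropy solutions (cf. \cite{dareiotis2019entropy}), using the $n$-uniform $L_\infty(0,T;L_2(\mathbb{T}^d))$ and $L_\infty(0,T;L_{m+1}(\mathbb{T}^d))$ bounds together with the convergence of $u_{n,\epsilon}$. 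Since truncation and mollification are $L_2(\mathbb{T}^d)$- and $L_{m+1}(\mathbb{T}^d)$-contractions, $\mathbb{E}\|\xi_n\|_{L_2(\mathbb{T}^d)}^p\le\mathbb{E}\|\xi\|_{L_2(\mathbb{T}^d)}^p$ and $\mathbb{E}\|\xi_n\|_{L_{m+1}(\mathbb{T}^d)}^{m+1}\le\mathbb{E}\|\xi\|_{L_{m+1}(\mathbb{T}^d)}^{m+1}$, so the right-hand sides are exactly as claimed, and the constants are $\epsilon$-independent since those in Theorems \ref{thm:esimate for u_n,epsilon} and \ref{thm:estimate for L2 for G} are (the constant in Lemma \ref{lem:L1}(ii), used only to produce the limit, does not depend on the Lipschitz constant $1/\epsilon$ of $G_\epsilon$).
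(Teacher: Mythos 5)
Your proposal is correct and follows essentially the same route as the paper: an $L_1$-Cauchy argument for $\{u_{n,\epsilon}\}_n$ via Lemma \ref{lem:L1}(ii) with the $(\star)$-property and the choice $\lambda\sim 1/n$, a.e. convergence plus uniform integrability, lower semicontinuity to propagate the bounds of Theorems \ref{thm:esimate for u_n,epsilon} and \ref{thm:estimate for L2 for G}, and the weak $L_2$-convergence of $\nabla\left\llbracket \zeta_{n}\sqrt{\eta^{\prime\prime}}\right\rrbracket (u_{n,\epsilon})$ together with weak lower semicontinuity to pass the dissipation term to the limit in the entropy inequality. The only real deviation is minor: you dispose of the penalty-difference term by monotonicity of $r\mapsto(r-S)^{+}$, whereas the paper bounds it using the Lipschitz continuity of $G_{\epsilon}$, Lemma \ref{lem:change for u} and Gr\"onwall's inequality; both are valid, yours being slightly cleaner (it avoids an $\epsilon$-dependent Gr\"onwall constant, which is in any case harmless for fixed $\epsilon$), though in the limit passage for (iii) you should, as the paper does, test against $\mathbf{1}_{B}$, $B\in\mathcal{F}$, to recover the almost-sure form of the entropy inequality from the expectation-level weak-convergence arguments.
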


\begin{proof}
We take
\[
G(t,x,r)=\tilde{G}(t,x,r):=G_{\epsilon}(r,S(t))
\]
and $u=u_{n,\epsilon}$, $\tilde{u}=u_{n^{\prime},\epsilon}$ in Lemma
\ref{lem:L1}(ii) with fixed $\epsilon>0$. Because $n$ and $n^{\prime}$
have the same status, we can obtain a same inequality with swapping
$n$ and $n^{\prime}$. Adding them together, we have
\begin{align*}
 & \mathbb{E}\int_{t,x}\left|u_{n,\epsilon}(\tau,x)-u_{n^{\prime},\epsilon}(\tau,x)\right|\\
 & \leq C\mathbb{E}\int_{x}\left|\xi_{n}(x)-\xi_{n^{\prime}}(x)\right|+C\sup_{|h|\leq\varsigma}\mathbb{E}\left\Vert \xi_{n^{\prime}}(\cdot)-\xi_{n^{\prime}}(\cdot+h)\right\Vert _{L_{1}(\mathbb{T}^{d})}\\
 & \quad+C\sup_{|h|\leq\varsigma}\mathbb{E}\left\Vert \xi_{n}(\cdot)-\xi_{n}(\cdot+h)\right\Vert _{L_{1}(\mathbb{T}^{d})}\\
 & \quad+C\varsigma^{-2}\Bigg(\mathbb{E}\left\Vert \mathbf{1}_{\{\left|u_{n,\epsilon}\right|\geq R_{\lambda}\}}(1+\left|u_{n,\epsilon}\right|)\right\Vert _{L_{m}(Q_{T})}^{m}\\
 & \quad+\mathbb{E}\left\Vert \mathbf{1}_{\{\left|u_{n^{\prime},\epsilon}\right|\geq R_{\lambda}\}}(1+\left|u_{n^{\prime},\epsilon}\right|)\right\Vert _{L_{m}(Q_{T})}^{m}\Bigg)\\
 & \quad+C\mathfrak{C}(\varsigma,\delta,\lambda)\cdot\mathbb{E}\bigg(1+\left\Vert u_{n,\epsilon}\right\Vert _{L_{m+1}(Q_{T})}^{m+1}+\left\Vert u_{n^{\prime},\epsilon}\right\Vert _{L_{m+1}(Q_{T})}^{m+1}\bigg)\\
 & \quad+C\varsigma^{\frac{2}{m+1}}\left(1+\mathbb{E}\left\Vert \nabla\left\llbracket \zeta_{n}\right\rrbracket (u_{n,\epsilon})\right\Vert _{L_{1}(Q_{T})}+\mathbb{E}\left\Vert \nabla\left\llbracket \zeta_{n^{\prime}}\right\rrbracket (u_{n^{\prime},\epsilon})\right\Vert _{L_{1}(Q_{T})}\right)\\
 & \quad+C\mathbb{E}\int_{t,x,y}\big|G_{\epsilon}(\tilde{u}(t,y),S(t))-G_{\epsilon}(u(t,x),S(t))\big|\varrho_{\varsigma}(x-y).
\end{align*}
Note that $G_{\epsilon}$ is Lipschitz continuous for fixed $\epsilon>0$.
Using Lemma \ref{lem:change for u} and Gr\"onwall's inequality, we
can eliminate the last term and obtain the $L_{1}$ estimate for $u_{n,\epsilon}$
and $u_{n^{\prime},\epsilon}$.

Without loss of generality, we can assume $n\leq n^{\prime}$. Taking
$\lambda=8/n$ and using Proposition \ref{prop:Assumptio for coef},
we have $R_{\lambda}>n$. We also choose $\vartheta>(m\land2)^{-1}$
and $\alpha\in(1/(2\vartheta),(m\land2)/2)$. Let $\delta=\varsigma^{2\vartheta}$.
With Theorem \ref{thm:esimate for u_n,epsilon}, we have
\[
\mathbb{E}\int_{t,x}\left|u_{n,\epsilon}(\tau,x)-u_{n^{\prime},\epsilon}(\tau,x)\right|\leq M_{1}(\varsigma)+M_{2}(\varsigma,n,n^{\prime})
\]
with
\[
M_{1}(\varsigma):=C\left(\sup_{|h|\leq\varsigma}\mathbb{E}\left\Vert \xi(\cdot)-\xi(\cdot+h)\right\Vert _{L_{1}(\mathbb{T}^{d})}+\varsigma^{4\vartheta\kappa}+\varsigma^{\bar{\kappa}}+\varsigma^{-2+4\alpha\vartheta}+\varsigma^{\frac{2}{m+1}}\right)
\]
and
\begin{align*}
M_{2}(\varsigma,n,n^{\prime}) & :=C\left(\mathbb{E}\left\Vert \xi-\xi_{n}\right\Vert _{L_{1}(\mathbb{T}^{d})}+\mathbb{E}\left\Vert \xi-\xi_{n^{\prime}}\right\Vert _{L_{1}(\mathbb{T}^{d})}\right)+C\varsigma^{-2}n^{-2}\\
 & \quad+C\varsigma^{-2}\Bigg(\mathbb{E}\left\Vert \mathbf{1}_{\{\left|u_{n,\epsilon}\right|\geq n\}}(1+\left|u_{n,\epsilon}\right|)\right\Vert _{L_{m}(Q_{T})}^{m}\\
 & \quad+\mathbb{E}\left\Vert \mathbf{1}_{\{\left|u_{n^{\prime},\epsilon}\right|\geq n\}}(1+\left|u_{n^{\prime},\epsilon}\right|)\right\Vert _{L_{m}(Q_{T})}^{m}\Bigg).
\end{align*}
Since $M_{1}(\varsigma)$ converges to 0 when $\varsigma\rightarrow0^{+}$,
for any $\varepsilon_{0}>0$, it is smaller than $\varepsilon_{0}$
for sufficiently small $\varsigma$. Then, for fixed $\varsigma$,
we choose $n_{0}$ big enough such that $M_{2}(\varsigma,n,n^{\prime})$
is smaller than $\varepsilon_{0}$ for all $n_{0}\leq n\leq n^{\prime}$.
Therefore, we have
\[
\mathbb{E}\left\Vert u_{n,\epsilon}-u_{n^{\prime},\epsilon}\right\Vert _{L_{1}(Q_{T})}\leq2\varepsilon_{0},\quad\forall n_{0}\leq n\leq n^{\prime},
\]
which indicates the sequence $\{u_{n,\epsilon}\}_{n\in\mathbb{N}}$
converges to a limit $u_{\epsilon}$ in $L_{1}(\Omega_{T};L_{1}(\mathbb{T}^{d}))$. By taking a subsequence, when $n\rightarrow\infty$, we may assume that $u_{n,\epsilon}\rightarrow u_{\epsilon}$ almost surely in $\Omega_{T}\times \mathbb{T}^d$. Moreover, Theorem \ref{thm:esimate for u_n,epsilon}
shows that the sequence $\{|u_{n,\epsilon}|^{q}\}_{n\in\mathbb{N}}$
is uniformly integrable on $\Omega_{T}\times\mathbb{T}^{d}$ for all
$q\in[0,m+1)$. 

If the right hand sides of (\ref{eq:u_e priori estimate1})-(\ref{eq:nu priori estimate})
are bounded, with the definition of $\xi_{n}$, the left hand sides
of the estimates in Theorems \ref{thm:esimate for u_n,epsilon} and
\ref{thm:estimate for L2 for G} are weak convergence in the corresponding
Banach space. By applying Banach-Saks Theorem and taking a subsequence,
the weak limits are the corresponding terms in the left hand side
of (\ref{eq:u_e priori estimate1})-(\ref{eq:nu priori estimate}).
Since
\[
\left\Vert f\right\Vert _{\mathfrak{B}}\leq\liminf_{n\rightarrow\infty}\left\Vert f_{n}\right\Vert _{\mathfrak{B}},\quad\forall f_{n}\in\mathfrak{B},\ f_{n}\rightharpoonup f,
\]
for all Banach space $\mathfrak{B}$, after taking inferior limit
to the estimates in Theorems \ref{thm:esimate for u_n,epsilon} and
\ref{thm:estimate for L2 for G}, we obtain estimates (\ref{eq:u_e priori estimate1})-(\ref{eq:nu priori estimate}).

Now we only need to verify that $u_{\epsilon}$ is an entropy solution
of $\Pi(\Phi,F-G_{\epsilon}(\cdot,S),\xi)$ in the sense of Definition
\ref{def:entropy solution without}. Firstly, Assertion (i) in Definition
\ref{def:entropy solution without} is a direct consequence of (\ref{eq:u_e priori estimate2}). 

As for Assertion (ii) in Definition \ref{def:entropy solution without},
for any $f\in C_{b}(\mathbb{R})$, using
\[
|[\zeta_{n}f](r)|\leq C\left\Vert f\right\Vert _{L_{\infty}}|r|^{(m+1)/2},\quad\forall r\in\mathbb{R},
\]
and $\partial_{x_{i}}\left\llbracket \zeta_{n}f\right\rrbracket (u_{n,\epsilon})=f(u_{n,\epsilon})\partial_{x_{i}}\left\llbracket \zeta_{n}\right\rrbracket (u_{n,\epsilon})$
and Theorem \ref{thm:esimate for u_n,epsilon}, we have
\[
\sup_{n}\mathbb{E}\int_{t}\left\Vert \left\llbracket \zeta_{n}f\right\rrbracket (u_{n,\epsilon})\right\Vert _{H^{1}(\mathbb{T}^{d})}^{2}<\infty.
\]
By taking a subsequence, we have that $\left\llbracket \zeta_{n}f\right\rrbracket (u_{n,\epsilon})$
converges weakly to some $v_{f}$ in $L_{2}(\Omega_{T};H^{1}(\mathbb{T}^{d}))$,
and $\left\llbracket \zeta_{n}\right\rrbracket (u_{n,\epsilon})$
converges weakly to some $v$ in $L_{2}(\Omega_{T};H^{1}(\mathbb{T}^{d}))$.
Then, with the pointwise convergence and uniform integrability of
$u_{n,\epsilon}$ and Proposition \ref{prop:Assumptio for coef},
we have $v_{f}=\left\llbracket \zeta f\right\rrbracket (u_{\epsilon})$
and $v=\left\llbracket \zeta\right\rrbracket (u_{\epsilon})$. Based
on the strong convergence of $f(u_{n,\epsilon})\phi$ and weak convergence
of $\left\llbracket \zeta_{n}\right\rrbracket (u_{n,\epsilon})$,
we have
\begin{align*}
\mathbb{E}\bigg[\mathbf{1}_{B}\int_{t,x}\partial_{x_{i}}\left\llbracket \zeta f\right\rrbracket (u_{\epsilon})\phi\bigg] & =\lim_{n\rightarrow\infty}\int_{t,x}\partial_{x_{i}}\left\llbracket \zeta_{n}f\right\rrbracket (u_{n,\epsilon})\phi\\
 & =\lim_{n\rightarrow\infty}\int_{t,x}f(u_{n,\epsilon})\partial_{x_{i}}\left\llbracket \zeta_{n}\right\rrbracket (u_{n,\epsilon})\phi\\
 & =\int_{t,x}f(u_{\epsilon})\partial_{x_{i}}\left\llbracket \zeta\right\rrbracket (u_{\epsilon})\phi,\quad\forall\phi\in C^{\infty}(\mathbb{T}^{d}),B\in\mathcal{F}.
\end{align*}

To prove Assertion (iii), denote $\eta$, $\varrho$ and $\phi$ as
test functions in Assertion (iii). Applying It\^{o}'s formula and using It\^{o}'s product rule, we have
\begin{align}
 & -\mathbb{E}\bigg[\mathbf{1}_{B}\int_{0}^{T}\int_{\mathbb{T}^{d}}\eta(u_{n,\epsilon})\partial_{t}\phi dxdt\bigg]\nonumber\\
 & =\mathbb{E}\Bigg\{\mathbf{1}_{B}\bigg[\int_{\mathbb{T}^{d}}\eta(\xi_{n})\phi(0)dx+\int_{0}^{T}\int_{\mathbb{T}^{d}}\llbracket \zeta_{n}^2 \eta^\prime \rrbracket (u_{n,\epsilon}) \Delta\phi dxdt\nonumber \\
 & \quad+\int_{0}^{T}\int_{\mathbb{T}^{d}}\eta^{\prime}(u_{n,\epsilon})\big(F(t,x,u_{n,\epsilon})-G_{\epsilon}(u_{n,\epsilon},S(t))\big)\phi dxdt\label{eq:Itofornepsilon} \\
 & \quad+\int_{0}^{T}\int_{\mathbb{T}^{d}}\left(\frac{1}{2}\eta^{\prime\prime}(u_{n,\epsilon})\sum_{k=1}^{\infty}|\sigma^{k}(u_{n,\epsilon})|^{2}\phi-\eta^{\prime\prime}(u_{n,\epsilon})|\nabla\left\llbracket \zeta_{n}\right\rrbracket (u_{n,\epsilon})|^{2}\phi\right)dxdt\nonumber \\
 & \quad+\sum_{k=1}^{\infty}\int_{0}^{T}\int_{\mathbb{T}^{d}}\eta^{\prime}(u_{n,\epsilon})\phi\sigma^{k}(u_{n,\epsilon})dxdW_{t}^{k}\bigg]\Bigg\},\nonumber 
\end{align}
Similar to the proof of Assertion (ii), we have that $\partial_{x_{i}}\left\llbracket \zeta_{n}\sqrt{\eta^{\prime\prime}}\right\rrbracket (u_{n})$
 converges weakly to $\partial_{x_{i}}\left\llbracket \zeta\sqrt{\eta^{\prime\prime}}\right\rrbracket (u)$
in $L_{2}(\Omega_{T};L_{2}(\mathbb{T}^{d}))$, which implies
\begin{align*}
 & \mathbb{E}\Bigg[\mathbf{1}_{B}\int_{0}^{T}\int_{\mathbb{T}^{d}}\eta^{\prime\prime}(u_{\epsilon})|\nabla\left\llbracket \zeta\right\rrbracket (u_{\epsilon})|^{2}\phi dxdt\Bigg]\\
 & \leq\liminf_{n\rightarrow\infty}\mathbb{E}\Bigg[\mathbf{1}_{B}\int_{0}^{T}\int_{\mathbb{T}^{d}}\eta^{\prime\prime}(u_{n,\epsilon})|\nabla\left\llbracket \zeta_{n}\right\rrbracket (u_{n,\epsilon})|^{2}\phi dxdt\Bigg].
\end{align*}
Therefore, taking inferior limit on (\ref{eq:Itofornepsilon}) and
using Assumptions \ref{assu:assumption for phi}-\ref{assu:inhomogeneous function}
and the convergence of $u_{n,\epsilon}$, we acquire the entropy formulation
in (iii). The proof is complete.
\end{proof}
\begin{rem}
\label{rem:Star_for_u_e}If furthermore $\mathbb{E}\left\Vert \xi\right\Vert _{L_{2}(\mathbb{T}^{d})}^{4}<\infty$,
applying Lemma \ref{lem:appro for star} and Theorem \ref{thm:star property},
we have the $(\star)$-property of $u_{\epsilon}$, and the constant
$C$ in Definition \ref{def:star property} is independent of $\epsilon$.
\end{rem}

\begin{lem}
\label{lem:Lemma for comparison}Let Assumptions \ref{assu:assumption for phi}-\ref{assu:Assuption for barrier}
hold. For each $\epsilon_{1}>\epsilon_{2}>0$, let $u_{\epsilon_{1}}$
and $u_{\epsilon_{2}}$ be the entropy solutions constructed in Theorem
\ref{The:ex and uni for u_e}. Then, we have
\[
u_{\epsilon_{1}}\geq u_{\epsilon_{2}}\geq0,\quad\textrm{a.s.}\ (\omega,t,x)\in\Omega_{T}\times\mathbb{T}^{d}.
\]
\end{lem}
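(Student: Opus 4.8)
The plan is to derive the comparison first for the non-degenerate approximations $u_{n,\epsilon}$ of Theorem~\ref{thm:exuni_approximation equation} and then to let $n\to\infty$. Fix $n\in\mathbb{N}$. For $\epsilon_1>\epsilon_2>0$ the $L_2$-solutions $u_{n,\epsilon_1}$ and $u_{n,\epsilon_2}$ are, by Remark~\ref{rem:L2equalentropy}, entropy solutions of $\Pi(\Phi_n,F-G_{\epsilon_1}(\cdot,S),\xi_n)$ and $\Pi(\Phi_n,F-G_{\epsilon_2}(\cdot,S),\xi_n)$; by Theorem~\ref{thm:star property} both have the $(\star)$-property; and the maps $G(t,x,r):=G_{\epsilon_2}(r,S(t))$ and $\tilde{G}(t,x,r):=G_{\epsilon_1}(r,S(t))$ are Lipschitz in $r$ with $G(\cdot,\cdot,0)=\tilde{G}(\cdot,\cdot,0)=0$ since $S\ge0$. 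I would therefore apply Lemma~\ref{lem:L1}(i) with $\Phi=\tilde{\Phi}=\Phi_n$, $\xi=\tilde{\xi}=\xi_n$, $u=u_{n,\epsilon_2}$ and $\tilde{u}=u_{n,\epsilon_1}$.

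The step I expect to be the only genuine obstacle is showing that the penalty contribution on the right-hand side of Lemma~\ref{lem:L1}(i) drops out. This is where the ordering $\epsilon_1>\epsilon_2$ is used: since $r\mapsto(r-S(t))^+$ is non-negative and non-decreasing and $0<\epsilon_2<\epsilon_1$, on the set $\{u_{n,\epsilon_2}(t,x)>u_{n,\epsilon_1}(t,y)\}$ one has
\[
\tilde{G}\bigl(t,y,u_{n,\epsilon_1}(t,y)\bigr)=\tfrac{1}{\epsilon_1}\bigl(u_{n,\epsilon_1}(t,y)-S(t)\bigr)^+\le\tfrac{1}{\epsilon_2}\bigl(u_{n,\epsilon_1}(t,y)-S(t)\bigr)^+\le\tfrac{1}{\epsilon_2}\bigl(u_{n,\epsilon_2}(t,x)-S(t)\bigr)^+=G\bigl(t,x,u_{n,\epsilon_2}(t,x)\bigr),
\]
so $\mathbf{1}_{\{u_{n,\epsilon_2}(t,x)>u_{n,\epsilon_1}(t,y)\}}\bigl(\tilde{G}(t,y,u_{n,\epsilon_1}(t,y))-G(t,x,u_{n,\epsilon_2}(t,x))\bigr)^+=0$. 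The remaining terms are routine: choosing $\delta$ as a suitable power of $\varsigma$ (as in the proof of Theorem~\ref{The:ex and uni for u_e}) makes $\mathfrak{C}(\varsigma,\delta)\to0$ as $\varsigma\to0^+$, the moments $\mathbb{E}(1+\|u_{n,\epsilon_1}\|_{L_{m+1}(Q_T)}^{m+1}+\|u_{n,\epsilon_2}\|_{L_{m+1}(Q_T)}^{m+1})$ are finite by Theorem~\ref{thm:esimate for u_n,epsilon}, and the initial term satisfies $\mathbb{E}\int_{x,y}(\xi_n(x)-\xi_n(y))^+\varrho_\varsigma(x-y)\le C\sup_{|h|\le\varsigma}\mathbb{E}\|\xi_n(\cdot)-\xi_n(\cdot+h)\|_{L_1(\mathbb{T}^d)}\to0$.

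Collecting these facts, Lemma~\ref{lem:L1}(i) yields, for a.e.\ $\tau\in(0,T)$,
\[
\mathbb{E}\int_{x,y}\bigl(u_{n,\epsilon_2}(\tau,x)-u_{n,\epsilon_1}(\tau,y)\bigr)^+\varrho_\varsigma(x-y)\le o_\varsigma(1)+C\int_0^\tau\mathbb{E}\int_{x,y}\bigl(u_{n,\epsilon_2}(t,x)-u_{n,\epsilon_1}(t,y)\bigr)^+\varrho_\varsigma(x-y)\,dt.
\]
I would then let $\varsigma\to0^+$ — using the $L_2(\mathbb{T}^d)$-valued continuity of the $L_2$-solutions $u_{n,\epsilon_i}$ together with the $L_1$-continuity of spatial translations and dominated convergence — and apply Gr\"onwall's inequality to obtain $\mathbb{E}\int_{\mathbb{T}^d}(u_{n,\epsilon_2}(\tau,x)-u_{n,\epsilon_1}(\tau,x))^+\,dx=0$ for every $\tau\in[0,T]$, that is, $u_{n,\epsilon_2}\le u_{n,\epsilon_1}$ almost everywhere in $Q_T$, almost surely.

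It remains to pass to the limit $n\to\infty$. Along the subsequence for which $u_{n,\epsilon_i}\to u_{\epsilon_i}$ almost surely on $\Omega_T\times\mathbb{T}^d$ (constructed in the proof of Theorem~\ref{The:ex and uni for u_e}), the inequality $u_{n,\epsilon_2}\le u_{n,\epsilon_1}$ gives $u_{\epsilon_2}\le u_{\epsilon_1}$ almost everywhere, almost surely; likewise $u_{n,\epsilon_2}\ge0$ from Lemma~\ref{Lem:nonnegative of u_n,e} yields $u_{\epsilon_2}\ge0$, which completes the chain $u_{\epsilon_1}\ge u_{\epsilon_2}\ge0$. Apart from the penalty cancellation above, the argument is exactly the Gr\"onwall-type closure already needed for uniqueness, so I do not expect further difficulties; the only mild technical point is justifying the $\varsigma\to0^+$ passage at the fixed time $\tau$, which is why the $L_2$-valued continuity of $u_{n,\epsilon_i}$ is invoked.
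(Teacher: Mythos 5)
Your proposal is correct and follows essentially the same route as the paper: reduce to the non-degenerate penalized solutions $u_{n,\epsilon_i}$, apply Lemma \ref{lem:L1}(i) with $u=u_{n,\epsilon_2}$, $\tilde{u}=u_{n,\epsilon_1}$, $\Phi=\tilde{\Phi}=\Phi_n$, $\xi=\tilde{\xi}=\xi_n$, kill the penalty term using $\epsilon_1>\epsilon_2$ and the monotonicity of $r\mapsto(r-S)^+$, let $\varsigma\to0^+$ and apply Gr\"onwall, then pass to the a.s.\ limit $n\to\infty$ and invoke Lemma \ref{Lem:nonnegative of u_n,e} for the non-negativity. No gaps worth noting.
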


\begin{proof}
For each $n\in\mathbb{N}$, denote $u_{n,\epsilon_{1}}$ and $u_{n,\epsilon_{2}}$
are the $L_{2}$-solutions of $\Pi(\Phi_{n},F-G_{\epsilon_{1}}(\cdot,S),\xi_{n})$
and $\Pi(\Phi_{n},F-G_{\epsilon_{2}}(\cdot,S),\xi_{n})$, respectively.
Based on the proof of Theorem \ref{The:ex and uni for u_e}, by repectedly taking subsequences, when $n\rightarrow\infty$, we can
assume $u_{n,\epsilon_{1}}\rightarrow u_{\epsilon_1}$ and $u_{n,\epsilon_{2}}\rightarrow u_{\epsilon_2}$ almost surely in $\Omega_{T}\times\mathbb{T}^d$. Therefore, we only need to prove
\[
u_{n,\epsilon_{1}}\geq u_{n,\epsilon_{2}}\geq0,\quad\textrm{a.s.}\ (\omega,t,x)\in\Omega_{T}\times\mathbb{T}^{d},
\]
while the second inequality is shown in Lemma \ref{Lem:nonnegative of u_n,e}.

For the first inequality, Using Remark \ref{rem:L2equalentropy},
we have that $L_{2}$-solutions $u_{n,\epsilon_{1}}$ and $u_{n,\epsilon_{2}}$
are also entropy solutions of the corresponding equations. Therefore,
we apply Theorem \ref{lem:L1} with $u=u_{n,\epsilon_{2}}$ and $\tilde{u}=u_{n,\epsilon_{1}}$
and take $\Phi=\tilde{\Phi}:=\Phi_{n}$, $\xi=\tilde{\xi}:=\xi_{n}$,
$G(t,x,r):=G_{\epsilon_{2}}(r,S(t))$ and $\tilde{G}(t,x,r):=G_{\epsilon_{1}}(r,S(t))$.
Then, we have for all $\tau\in[0,T]$, $\varsigma,\delta\in(0,1)$,
$\lambda\in[0,1]$ and $\alpha\in(0,1\land(m/2))$,
\begin{align}
 & \mathbb{E}\int_{x,y}\left(u_{n,\epsilon_{2}}(\tau,x)-u_{n,\epsilon_{1}}(\tau,y)\right)^{+}\varrho_{\varsigma}(x-y)\nonumber\\
 & \leq\mathbb{E}\int_{x,y}\left(\xi_{n}(x)-\xi_{n}(y)\right)^{+}\varrho_{\varsigma}(x-y)\nonumber \\
 & \quad+C\mathfrak{C}(\varsigma,\delta)\mathbb{E}\left(1+\left\Vert u_{n,\epsilon_{2}}\right\Vert _{L_{m+1}(Q_{T})}^{m+1}+\left\Vert u_{n,\epsilon_{1}}\right\Vert _{L_{m+1}(Q_{T})}^{m+1}\right)\label{eq:compareL1} \\
 & \quad+C\mathbb{E}\int_{0}^{\tau}\int_{x,y}\left(u_{n,\epsilon_{2}}(t,x)-u_{n,\epsilon_{1}}(t,y)\right)^{+}\varrho_{\varsigma}(x-y)dt\nonumber \\
 & \quad+C\mathbb{E}\int_{0}^{\tau}\int_{x,y}\mathbf{1}_{\{u_{n,\epsilon_{2}}(t,x)\geq u_{n,\epsilon_{1}}(t,y)\}}\nonumber \\
 & \quad\cdot\left(\frac{1}{\epsilon_{1}}\left(u_{n,\epsilon_{1}}(t,y)-S(t)\right)^{+}-\frac{1}{\epsilon_{2}}\left(u_{n,\epsilon_{2}}(t,x)-S(t)\right)^{+}\right)^{+}\varrho_{\varsigma}(x-y)dt.\nonumber 
\end{align}
Since
\begin{align*}
 & \frac{1}{\epsilon_{1}}(u_{n,\epsilon_{1}}(t,y)-S(t))^{+}-\frac{1}{\epsilon_{2}}(u_{n,\epsilon_{2}}(t,x)-S(t))^{+}\\
 & \leq\frac{1}{\epsilon_{2}}\left(u_{n,\epsilon_{1}}(t,y)-u_{n,\epsilon_{2}}(t,x)\right)^{+},
\end{align*}
using the non-negativity of $\varrho_{\varsigma}$, the last term
on the right hand side of (\ref{eq:compareL1}) is no more than $0$.
On the other hand, choose $\vartheta>(m\wedge2)^{-1}$ and then $\alpha<1\wedge(m/2)$
such that $-2+(2\alpha)(2\vartheta)>0$. Let $\delta=\varsigma^{2\vartheta}$
then yields $\mathfrak{C}(\varsigma,\delta)\rightarrow0$ as $\varsigma\rightarrow0^{+}$.
Therefore, with the continuity of translations in $L_{1}$ and taking
the limit $\varsigma\rightarrow0^{+}$, we have
\begin{align*}
 & \mathbb{E}\int_{x,y}(u_{n,\epsilon_{2}}(\tau,x)-u_{n,\epsilon_{1}}(\tau,x))^{+}\\
 & \leq C\mathbb{E}\int_{0}^{\tau}\int_{x,y}(u_{n,\epsilon_{2}}(t,x)-u_{n,\epsilon_{1}}(t,x))^{+}dt.
\end{align*}
Using Gr\"onwall's inequality, we have the desired result.
\end{proof}
\begin{thm}
\label{thm:existfor u_v}Let Assumptions \ref{assu:assumption for phi}-\ref{assu:Assuption for barrier}
hold. Then, the obstacle problem $\Pi_{S}(\Phi,F,\xi)$ has an entropy
solution $(u,\nu)$ in the sense of Definition \ref{def:entropy solution with ob}.
\end{thm}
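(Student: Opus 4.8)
The plan is to pass to the limit $\epsilon\to 0^{+}$ in the family $\{u_{\epsilon}\}_{\epsilon>0}$ of entropy solutions of $\Pi(\Phi,F-G_{\epsilon}(\cdot,S),\xi)$ produced by Theorem \ref{The:ex and uni for u_e}, and to recover $\nu$ as a weak limit of the penalty terms $\nu_{\epsilon}:=G_{\epsilon}(u_{\epsilon},S)=\epsilon^{-1}(u_{\epsilon}-S)^{+}$. First I would set up the compactness. By Lemma \ref{lem:Lemma for comparison}, $\epsilon\mapsto u_{\epsilon}$ is nondecreasing and $u_{\epsilon}\geq 0$, so $u_{\epsilon}$ decreases $d\mathbb{P}\otimes dt\otimes dx$-a.e.\ to a limit $u\geq 0$; the uniform bound (\ref{eq:u_e priori estimate2}) and Fatou's lemma give $u\in L_{m+1}(\Omega_{T};L_{m+1}(\mathbb{T}^{d}))$, and domination by a fixed $u_{\epsilon_{0}}$ upgrades the convergence to $u_{\epsilon}\to u$ in $L_{q}(\Omega_{T}\times\mathbb{T}^{d})$ for every $q\in[1,m+1)$, in particular in $L_{2}$; being an a.e.\ limit of predictable processes, $u$ is predictable. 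By (\ref{eq:nu priori estimate}), $\{\nu_{\epsilon}\}$ is bounded in $L_{2}(\Omega_{T};L_{2}(\mathbb{T}^{d}))$, so after extracting a subsequence $\nu_{\epsilon}\rightharpoonup\nu$ weakly there; since the predictable processes and the nonnegative cone are weakly closed, $\nu$ is predictable and $\nu\geq 0$. Finally (\ref{eq:nu priori estimate}) also gives $\|(u_{\epsilon}-S)^{+}\|_{L_{2}(\Omega_{T}\times\mathbb{T}^{d})}^{2}\leq C\epsilon^{2}\to 0$, hence $u\leq S$ a.e. This already yields condition (i) of Definition \ref{def:entropy solution with ob} (with the $L_{2}$-bound on $\nu$ from lower semicontinuity of the norm) and the inequality $u\leq S$ in condition (iv).

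Next I would establish the entropy inequality (\ref{eq:entropy formula-1}) and the chain-rule identity of condition (ii) by passing to the limit in the corresponding statements for $u_{\epsilon}$, in the spirit of the proof of Theorem \ref{The:ex and uni for u_e}. Fixing test functions $(\eta,\varphi,\varrho)$, $\phi=\varphi\varrho\geq 0$ and $B\in\mathcal{F}$, I multiply the entropy inequality (\ref{eq:entropy formula withou}) for $u_{\epsilon}$ by $\mathbf{1}_{B}$, take expectations, and let $\epsilon\to 0$ along the subsequence. The terms built from $\eta(u_{\epsilon})$, $\llbracket\zeta^{2}\eta'\rrbracket(u_{\epsilon})$, $\eta'(u_{\epsilon})F(t,x,u_{\epsilon})$, $\eta''(u_{\epsilon})\sum_{k}|\sigma^{k}(u_{\epsilon})|^{2}$ and the martingale term converge as in Theorem \ref{The:ex and uni for u_e} (dominated convergence, using boundedness of $\eta'$, compact support of $\eta''$, the linear growth of $\sigma$, and the a.e.\ and $L_{q}$ convergence of $u_{\epsilon}$). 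The new term $-\eta'(u_{\epsilon})\,\nu_{\epsilon}\,\phi$ is a product of $\nu_{\epsilon}\rightharpoonup\nu$ (weakly in $L_{2}$) and $\eta'(u_{\epsilon})\phi\to\eta'(u)\phi$ (strongly in $L_{2}$), so it passes to $-\eta'(u)\nu\phi$. For the dissipation term I use, as in Theorem \ref{The:ex and uni for u_e}, that $\partial_{x_{i}}\llbracket\zeta\sqrt{\eta''}\rrbracket(u_{\epsilon})=\sqrt{\eta''(u_{\epsilon})}\,\partial_{x_{i}}\llbracket\zeta\rrbracket(u_{\epsilon})$ (property (ii) with $f=\sqrt{\eta''}\in C_{b}(\mathbb{R})$) converges weakly in $L_{2}$ to $\partial_{x_{i}}\llbracket\zeta\sqrt{\eta''}\rrbracket(u)$, whence by weak lower semicontinuity $\mathbb{E}[\mathbf{1}_{B}\int\eta''(u)|\nabla\llbracket\zeta\rrbracket(u)|^{2}\phi]\leq\liminf_{\epsilon}\mathbb{E}[\mathbf{1}_{B}\int\eta''(u_{\epsilon})|\nabla\llbracket\zeta\rrbracket(u_{\epsilon})|^{2}\phi]$; passing to the $\liminf$ on the right of (\ref{eq:entropy formula withou}) and using arbitrariness of $B$ gives (\ref{eq:entropy formula-1}) for $(u,\nu)$. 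Condition (ii) itself follows as in Theorem \ref{The:ex and uni for u_e}: the uniform bound $\sup_{\epsilon}\mathbb{E}\|\llbracket\zeta f\rrbracket(u_{\epsilon})\|_{H^{1}(\mathbb{T}^{d})}^{2}<\infty$ (from (\ref{eq:u_e priori estimate1})--(\ref{eq:u_e priori estimate2}) and property (ii) for $u_{\epsilon}$), weak compactness in $L_{2}(\Omega_{T};H^{1})$, and pointwise convergence of $u_{\epsilon}$ identify the limits of $\llbracket\zeta f\rrbracket(u_{\epsilon})$ and $\llbracket\zeta\rrbracket(u_{\epsilon})$ and yield $\partial_{x_{i}}\llbracket\zeta f\rrbracket(u)=f(u)\partial_{x_{i}}\llbracket\zeta\rrbracket(u)$.

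It then remains to check the Skohorod condition. Since $u\leq S$ a.e.\ and $\nu\geq 0$, $(u-S)\nu\leq 0$ a.e., so it suffices to show $\mathbb{E}\int_{Q_{T}}(u-S)\nu\,dtdx\geq 0$. For every $\epsilon$, $(u_{\epsilon}-S)\nu_{\epsilon}=\epsilon^{-1}|(u_{\epsilon}-S)^{+}|^{2}\geq 0$ and $\mathbb{E}\int_{Q_{T}}(u_{\epsilon}-S)\nu_{\epsilon}\,dtdx=\epsilon^{-1}\mathbb{E}\|(u_{\epsilon}-S)^{+}\|_{L_{2}(Q_{T})}^{2}\leq C\epsilon\to 0$ by (\ref{eq:nu priori estimate}). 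On the other hand, along the chosen subsequence $u_{\epsilon}-S\to u-S$ strongly in $L_{2}(\Omega_{T}\times\mathbb{T}^{d})$ (recall $S\in L_{2}(\Omega;C[0,T])$ from Assumption \ref{assu:Assuption for barrier}) while $\nu_{\epsilon}\rightharpoonup\nu$ weakly there, so $\mathbb{E}\int_{Q_{T}}(u_{\epsilon}-S)\nu_{\epsilon}\,dtdx\to\mathbb{E}\int_{Q_{T}}(u-S)\nu\,dtdx$. Hence $\mathbb{E}\int_{Q_{T}}(u-S)\nu\,dtdx=0$, which together with $(u-S)\nu\leq 0$ a.e.\ forces $\int_{Q_{T}}(u-S)\nu\,dtdx=0$ a.s.; thus $(u,\nu)$ is an entropy solution of $\Pi_{S}(\Phi,F,\xi)$.

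The step I expect to be the main obstacle is the passage to the limit in the two genuinely nonlinear pieces of the entropy relation: identifying the limit of the penalty term as $-\eta'(u)\nu\phi$, which relies on the $L_{2}$-boundedness (\ref{eq:nu priori estimate}) of $\nu_{\epsilon}$ and hence on the special structure of $S$ in Assumption \ref{assu:Assuption for barrier} (without it one only controls $\nu_{\epsilon}$ in $L_{1}$, by Theorem \ref{thm:estimate for L1}, and cannot extract a weak $L_{2}$ limit), and handling the dissipation term $\eta''(u_{\epsilon})|\nabla\llbracket\zeta\rrbracket(u_{\epsilon})|^{2}$ solely through weak lower semicontinuity — which is exactly why the limiting entropy relation must remain an inequality; together with the compatibility, in the Skohorod step, of the strong $L_{2}$-convergence of $u_{\epsilon}$ with the weak $L_{2}$-convergence of $\nu_{\epsilon}$.
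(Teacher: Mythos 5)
Your proposal is correct and follows essentially the same route as the paper: monotone a.s. convergence of $u_{\epsilon}$ via the comparison Lemma \ref{lem:Lemma for comparison} plus dominated convergence, a weak $L_{2}$ limit of the penalty terms giving $\nu$, passage to the limit in the entropy inequality by pairing strong convergence of $\eta'(u_{\epsilon})\phi$ with weak convergence of $\nu_{\epsilon}$ and using weak lower semicontinuity for the dissipation term, and the estimate (\ref{eq:nu priori estimate}) to obtain $u\leq S$ and the Skohorod condition. The only (immaterial) differences are that you use lower semicontinuity of the norm where the paper invokes Banach--Saks, and you observe the Skohorod limit is exactly zero rather than sandwiching it between $\geq 0$ and $\leq 0$.
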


\begin{proof}
Let $\{\epsilon_{i}\}_{i\in\mathbb{N}}$ be a monotone decreasing
sequence such that $\lim_{i\rightarrow\infty}\epsilon_{i}=0$. From
Theorem \ref{The:ex and uni for u_e} and Lemma \ref{lem:Lemma for comparison},
equation $\Pi(\Phi,F-G_{\epsilon_{i}}(\cdot,S),\xi)$ has an entropy
solution $u_{\epsilon_{i}}$, and the functions $u_{\epsilon_{i}}$ almost surely decrease
to a limit $u$ as $i\rightarrow\infty$. This is also a strong convergence
in $L_{m+1}(\Omega;C([0,T];L_{m+1}(\mathbb{T}^{d})))$ based on the
dominated convergence theorem and (\ref{eq:u_e priori estimate2}),
and we have
\[
\mathbb{E}\sup_{t\leq T}\left\Vert u(t)\right\Vert _{L_{m+1}(\mathbb{T}^{d})}^{m+1}\leq C\left(1+\mathbb{E}\left\Vert \xi\right\Vert _{L_{m+1}(\mathbb{T}^{d})}^{m+1}\right).
\]

On the other hand, using estimate (\ref{eq:nu priori estimate}) and
taking a subsequence, we have that the sequence $\{G_{\epsilon_{i}}(u_{\epsilon_{i}},S)\}_{i\in\mathbb{N}}$
converges weakly to some function $\nu\in L_{2}(\Omega_{T}\times\mathbb{T}^{d})$
as $i\rightarrow\infty$. The non-negativity of $\nu$ is easily obtained
by taking test function in $L_{2}(\Omega_{T}\times\mathbb{T}^{d})$.
Applying Banach-Saks Theorem and taking the subsequence again, we
have
\[
\mathbb{E}\left\Vert \nu\right\Vert _{L_{2}(Q_{T})}^{2}\leq C\left(1+\mathbb{E}\left\Vert \xi\right\Vert _{L_{2}(\mathbb{T}^{d})}^{2}\right).
\]
Now we verify that $(u,\nu)$ is an entropy solution of obstacle problem
$\Pi_{S}(\Phi,F,\xi)$. Note that Assertion (i) of Definition \ref{def:entropy solution with ob}
has been proved, and Assertion (ii) and (iii) can be verified as in
the proof of Theorem \ref{The:ex and uni for u_e} via the strong
convergences of $u_{\epsilon_{i}}$ and $\eta^{\prime}(u_{\epsilon_{i}})$
and the weak convergence of $G_{\epsilon_{i}}(u_{\epsilon_{i}},S)$. 

For Assertion (iv), using estimate (\ref{eq:nu priori estimate})
and the strong convergence of $u_{\epsilon_{i}}$, we have
\begin{align*}
\mathbb{E}\left\Vert (u-S)^{+}\right\Vert _{L_{2}(Q_{T})}^{2} & =\lim_{i\rightarrow\infty}\mathbb{E}\left\Vert (u_{\epsilon_{i}}-S)^{+}\right\Vert _{L_{2}(Q_{T})}^{2}\\
 & \leq\lim_{i\rightarrow\infty}C\epsilon_{i}\left(1+\mathbb{E}\left\Vert \xi\right\Vert _{L_{2}(\mathbb{T}^{d})}^{2}\right)=0.
\end{align*}
Therefore, we have $u\leq S$ almost everywhere in $Q_{T}$, almost
surely.

Furthermore, using the strong convergence of $u_{\epsilon_{i}}-S$
and the weak convergence of $(u_{\epsilon_{i}}-S)^{+}/{\epsilon_{i}}$,
we obtain
\[
\mathbb{E}\int_{Q_{T}}(u-S)\nu dtdx=\lim_{i\rightarrow\infty}\mathbb{E}\int_{Q_{T}}(u_{\epsilon_{i}}-S)\frac{1}{\epsilon_{i}}(u_{\epsilon_{i}}-S)^{+}dtdx\geq0.
\]
Since $\nu\geq0$ and $u\leq S$, we have
\[
\mathbb{E}\int_{Q_{T}}(u-S)\nu dtdx\leq0.
\]
Combining these two inequalities, we have that the entropy solution
$(u,\nu)$ satisfies the Skohorod condition.
\end{proof}
\begin{rem}
\label{rem:starprofor u}If furthermore $\mathbb{E}\left\Vert \xi\right\Vert _{L_{2}(\mathbb{T}^{d})}^{4}<\infty$,
then Lemma \ref{lem:appro for star} and Remark \ref{rem:Star_for_u_e}
show that $u$ in Theorem \ref{thm:existfor u_v} has $(\star)$-property,
which will be used in the proof of uniqueness.
\end{rem}

\section{Uniqueness of solution}
\begin{thm}
\label{thm:uniqueentropy}Let Assumptions \ref{assu:assumption for phi}-\ref{assu:Assuption for barrier}
hold. Suppose that $(u,\nu)$ and $(\tilde{u},\tilde{\nu})$ are two
entropy solutions of the obstacle problem $\Pi_{S}(\Phi,F,\xi)$ and
$\Pi_{S}(\Phi,F,\tilde{\xi})$, respectively. Moreover, $u$ has $(\star)$-property.
Then, we have
\[
\underset{t\in[0,T]}{\mathrm{ess\ sup}}\ \mathbb{E}\int_{x}|u(t,x)-\tilde{u}(t,x)|\leq C\mathbb{E}\int_{x}|\xi(x)-\tilde{\xi}(x)|
\]
for a constant $C$ depending only on $K$, $N_{0}$, $d$ and $T$.
If furthermore $\xi=\tilde{\xi}$, we have $u=\tilde{u}$ almost everywhere
in $Q_{T}$, almost surely.
\end{thm}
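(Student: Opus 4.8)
The plan is to reduce the obstacle problem to the unconstrained entropy framework and then invoke the $L_{1}^{+}$ estimate of Lemma \ref{lem:L1}. First I would observe that an entropy solution $(u,\nu)$ of $\Pi_{S}(\Phi,F,\xi)$ is an entropy solution, in the sense of Definition \ref{def:entropy solution without}, of the stochastic porous media equation $\Pi(\Phi,F-G,\xi)$ with the predictable, $r$-independent source $G(t,x,r):=\nu(\omega,t,x)$: indeed, the entropy inequality of Definition \ref{def:entropy solution with ob}(iii) is exactly that of Definition \ref{def:entropy solution without}(iii) with $F$ replaced by $F-\nu$, and by Definition \ref{def:entropy solution with ob}(i) this $G$ is Lipschitz continuous in $r$ (with constant $0$) and obeys $G(\cdot,\cdot,0)=\nu\in L_{2}(\Omega_{T};L_{2}(\mathbb{T}^{d}))$. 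Similarly $(\tilde{u},\tilde{\nu})$ is an entropy solution of $\Pi(\Phi,F-\tilde{G},\tilde{\xi})$ with $\tilde{G}(t,x,r):=\tilde{\nu}(\omega,t,x)$. Since $\tilde{\Phi}=\Phi$ and $u$ has the $(\star)$-property, Lemma \ref{lem:L1}(i) applies to the pair $(u,\tilde{u})$ with $(G,\tilde{G})=(\nu,\tilde{\nu})$.

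Thus, for a.e.\ $\tau\in(0,T]$ and all $\varsigma,\delta\in(0,1]$, $\alpha\in(0,1\land(m/2))$,
\begin{align*}
 & \mathbb{E}\int_{x,y}(u(\tau,x)-\tilde{u}(\tau,y))^{+}\varrho_{\varsigma}(x-y)\\
 & \leq\mathbb{E}\int_{x,y}(\xi(x)-\tilde{\xi}(y))^{+}\varrho_{\varsigma}(x-y)+C\mathfrak{C}(\varsigma,\delta)\,\mathbb{E}\Big(1+\left\Vert u\right\Vert _{L_{m+1}(Q_{T})}^{m+1}+\left\Vert \tilde{u}\right\Vert _{L_{m+1}(Q_{T})}^{m+1}\Big)\\
 & \quad+C\mathbb{E}\int_{0}^{\tau}\int_{x,y}(u(t,x)-\tilde{u}(t,y))^{+}\varrho_{\varsigma}(x-y)dt\\
 & \quad+C\mathbb{E}\int_{0}^{\tau}\int_{x,y}\mathbf{1}_{\{u(t,x)>\tilde{u}(t,y)\}}\big(\tilde{\nu}(t,y)-\nu(t,x)\big)^{+}\varrho_{\varsigma}(x-y)dt,
\end{align*}
with $\mathfrak{C}(\varsigma,\delta)=\delta^{2\kappa}+\varsigma^{\bar{\kappa}}+\varsigma^{-2}\delta^{2\alpha}$. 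The key point --- the only place where the obstacle structure enters --- is that the last term vanishes: since $\nu\geq0$ we have $(\tilde{\nu}(t,y)-\nu(t,x))^{+}\leq\tilde{\nu}(t,y)$, while the Skohorod condition $\int_{Q_{T}}(\tilde{u}-S)\tilde{\nu}\,dtdx=0$ together with $\tilde{u}\leq S$ and $\tilde{\nu}\geq0$ forces $\tilde{u}(t,y)=S(t)$ wherever $\tilde{\nu}(t,y)>0$, whereas $u(t,x)\leq S(t)$ for a.e.\ $x$; hence $\mathbf{1}_{\{u(t,x)>\tilde{u}(t,y)\}}\,\tilde{\nu}(t,y)=0$ for a.e.\ $(t,x,y)$. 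This is what replaces the a priori bound on the penalty term $(u_{\epsilon}-S)^{+}/\epsilon$ used for the penalized equations (Theorem \ref{thm:estimate for L2 for G}).

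After discarding the vanishing term, Gr\"onwall's inequality in $\tau$ yields $\mathbb{E}\int_{x,y}(u(\tau,x)-\tilde{u}(\tau,y))^{+}\varrho_{\varsigma}(x-y)\leq C\big(\mathbb{E}\int_{x,y}(\xi(x)-\tilde{\xi}(y))^{+}\varrho_{\varsigma}(x-y)+\mathfrak{C}(\varsigma,\delta)M\big)$ for a.e.\ $\tau$, where $M:=\mathbb{E}(1+\left\Vert u\right\Vert _{L_{m+1}(Q_{T})}^{m+1}+\left\Vert \tilde{u}\right\Vert _{L_{m+1}(Q_{T})}^{m+1})$ is finite by Definition \ref{def:entropy solution with ob}(i). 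Choosing $\delta=\varsigma^{\beta}$ with $\beta>1/\alpha$ makes $\mathfrak{C}(\varsigma,\varsigma^{\beta})\to0$ as $\varsigma\to0^{+}$, and letting $\varsigma\to0^{+}$ (continuity of translations in $L_{1}(\mathbb{T}^{d})$ disposes of the mollifiers on both sides) gives $\mathbb{E}\int_{x}(u(\tau,x)-\tilde{u}(\tau,x))^{+}\leq C\mathbb{E}\int_{x}(\xi(x)-\tilde{\xi}(x))^{+}$ for a.e.\ $\tau$. The identical argument with $u$ and $\tilde{u}$ interchanged (the Skohorod condition being used symmetrically, now with $\nu$ and $\tilde{\nu}$ swapped) bounds $\mathbb{E}\int_{x}(\tilde{u}(\tau,x)-u(\tau,x))^{+}$ by $C\mathbb{E}\int_{x}(\tilde{\xi}(x)-\xi(x))^{+}$; adding the two bounds and taking the essential supremum over $\tau$ gives the claimed $L_{1}$-contraction, and for $\xi=\tilde{\xi}$ both right-hand sides vanish, so $u=\tilde{u}$ a.e.\ in $Q_{T}$, almost surely.

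I expect the main obstacle to be the vanishing of the penalty-difference term above, which is precisely where the obstacle problem departs from the unconstrained one: a generic entropy solution admits no pointwise control of such a term, and here it is removed outright by the three structural facts $\nu\geq0$, $u\leq S$, and the Skohorod complementarity. The remaining ingredients are routine --- that $\nu$ legitimately acts as a Lipschitz source $G$ in Lemma \ref{lem:L1} (immediate, $\nu$ being $r$-independent and in $L_{2}(\Omega_{T};L_{2}(\mathbb{T}^{d}))$), that the standing Assumptions \ref{assu:assumption for phi}--\ref{assu:inhomogeneous function} are inherited by both sets of data, and that the $(\star)$-property of $u$ is unaffected by the source term. (The exchanged argument additionally invokes the $(\star)$-property of $\tilde{u}$, which in the setting of Theorem \ref{thm:maintheorem} holds by Remark \ref{rem:starprofor u}.)
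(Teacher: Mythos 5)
Your reduction of the obstacle problem to the unconstrained framework (treating $\nu,\tilde{\nu}$ as $r$-independent Lipschitz sources in $L_{2}$) and your use of the Skohorod complementarity to kill the term $\mathbf{1}_{\{u>\tilde{u}\}}(\tilde{\nu}-\nu)^{+}$ are both sound, and up to that point your argument runs parallel to the paper's machinery. The gap is in the second half. Lemma \ref{lem:L1} only yields the one-sided bound on $(u-\tilde{u})^{+}$, and its hypothesis is that the \emph{first-listed} solution has the $(\star)$-property; your ``identical argument with $u$ and $\tilde{u}$ interchanged'' therefore requires the $(\star)$-property of $\tilde{u}$, which Theorem \ref{thm:uniqueentropy} deliberately does not assume. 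Your parenthetical repair --- that in the setting of Theorem \ref{thm:maintheorem} this holds by Remark \ref{rem:starprofor u} --- is not correct: in the uniqueness part of that proof, Theorem \ref{thm:uniqueentropy} is applied with the constructed (penalization-limit) solution in the role of $u$ and an \emph{arbitrary} entropy solution in the role of $\tilde{u}$, and Remark \ref{rem:starprofor u} gives the $(\star)$-property only for the constructed solution. If you strengthen the hypotheses so that both solutions have the $(\star)$-property, the resulting statement is too weak to support the uniqueness argument; the asymmetry of the hypothesis is the whole point of the theorem.

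The paper's proof is designed precisely to avoid the $(\star)$-property of $\tilde{u}$: it reruns the doubling-of-variables argument with the \emph{symmetric} entropy $\eta_{\delta}$ approximating $|\cdot|$ (i.e. $\eta_{\delta}^{\prime\prime}(r)=\rho_{\delta}(|r|)$), testing the entropy inequality of $u$ at $a=\tilde{u}(s,y)$ (whose stochastic integral vanishes because of the support of $\rho_{\theta}(t-s)$) and the entropy inequality of $\tilde{u}$ at $a=u(t,x)$ (whose stochastic integral is exactly the quantity controlled by the $(\star)$-property of $u$). The evenness of $\eta_{\delta}$ then produces the two-sided quantity $|u-\tilde{u}|$ in a single pass, and the obstacle terms are disposed of by the oddness and monotonicity of $\eta_{\delta}^{\prime}$ together with \emph{both} Skohorod conditions: on $\{\nu>0\}$ one has $u=S\geq\tilde{u}$, so $\eta_{\delta}^{\prime}(S-\tilde{u})(-\nu)\leq0$, and on $\{\tilde{\nu}>0\}$ one has $\tilde{u}=S\geq u$, so $\eta_{\delta}^{\prime}(u-S)\tilde{\nu}\leq0$. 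To close your proof you would need to carry out this symmetrized version (or an equivalent device) rather than invoking Lemma \ref{lem:L1} twice with the roles exchanged.
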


\begin{proof}
To get rid of using the $(\star)$-property of $\tilde{u}$, we need
to adjust the proof of Lemma \ref{lem:Lemma for L1} as in the proof
of \cite[Theorem 4.1]{dareiotis2019entropy}. We take $\eta_{\delta}\in C^{2}(\mathbb{R})$
such that
\[
\eta_{\delta}(0)=\eta_{\delta}^{\prime}(0)=0,\ \ \ \eta_{\delta}^{\prime\prime}(r)=\rho_{\delta}(|r|).
\]
Therefore, we have
\[
\left|\eta_{\delta}(r)-|r|\right|\leq\delta,\ \ \ \mathrm{supp}\ \eta_{\delta}^{\prime\prime}\subset[\delta,\delta],\ \ \ \int_{\mathbb{R}}\eta_{\delta}^{\prime\prime}(r)dr\leq2,\ \ \ |\eta_{\delta}^{\prime\prime}|\leq2\delta^{-1}.
\]

Based on the symmetry of $\eta_{\delta}$, we apply entropy formulation
(\ref{eq:entropy formula-1}) on $\eta_{\delta}(r-a)$ with $(r,a):=\big(u(t,x),\tilde{u}(s,y)\big)$
or $(r,a):=\big(\tilde{u}(s,y),u(t,x)\big)$ instead of both $\eta_{\delta}(r-a)$
and $\eta_{\delta}(a-r)$ in Lemma \ref{lem:Lemma for L1}. By applying
the $(\star)$-property of $u$ and taking the limit $\theta\rightarrow0^{+}$,
for all $\varsigma,\delta\in(0,1]$, we have
\begin{align}
 & -\mathbb{E}\int_{t,x,y}\eta_{\delta}(u-\tilde{u})\partial_{t}\phi_{\varsigma}\nonumber\\
 & \leq\mathbb{E}\int_{t,x,y}\llbracket \zeta^2 \eta_{\delta}^{\prime}(\cdot-\tilde{u})\rrbracket(u) \Delta_{x}\phi_{\varsigma}+\mathbb{E}\int_{t,x,y}\llbracket {\zeta}^2 \eta_{\delta}^{\prime}(u-\cdot)\rrbracket (\tilde{u})\Delta_{y}\phi_{\varsigma}\nonumber \\
 & \quad+\mathbb{E}\int_{t,x,y}\eta_{\delta}^{\prime}(u-\tilde{u})\Big[\big(F(t,x,u)-\nu\big)-\big(F(t,y,\tilde{u})-\tilde{\nu}\big)\Big]\phi_{\varsigma}\label{eq:diffL1 u tildeu} \\
 & \quad+\mathbb{E}\int_{t,x,y}\left(\frac{1}{2}\eta_{\delta}^{\prime\prime}(u-\tilde{u})\sum_{k=1}^{\infty}|\sigma^{k}(u)|^{2}\phi_{\varsigma}-\eta_{\delta}^{\prime\prime}(u-\tilde{u})|\nabla_{x}\left\llbracket \zeta\right\rrbracket (u)|^{2}\phi_{\varsigma}\right)\nonumber \\
 & \quad+\mathbb{E}\int_{t,x,y}\left(\frac{1}{2}\eta_{\delta}^{\prime\prime}(u-\tilde{u})\sum_{k=1}^{\infty}|\sigma^{k}(\tilde{u})|^{2}\phi_{\varsigma}-\eta_{\delta}^{\prime\prime}(u-\tilde{u})|\nabla_{y}\left\llbracket \zeta\right\rrbracket (\tilde{u})|^{2}\phi_{\varsigma}\right)\nonumber \\
 & \quad-\sum_{k=1}^{\infty}\mathbb{E}\int_{t,x,y}\phi_{\varsigma}\sigma^{k}(u)\sigma^{k}(\tilde{u})\eta_{\delta}^{\prime\prime}(u-\tilde{u}).\nonumber 
\end{align}
Here $u=u(t,x)$, $\nu=\nu(t,x)$, $\tilde{u}=\tilde{u}(t,y)$ and
$\tilde{\nu}=\tilde{\nu}(t,y)$, and the definition of $\phi_{\varsigma}$ can be found in Lemma \ref{lem:Lemma for L1}. Since $\eta_{\delta}^{\prime}$ is odd and monotone, using the Skohorod condition for $(u,\nu)$ and $(\tilde{u},\tilde{\nu})$,
we have
\begin{align*}
\mathbb{E}\int_{t,x,y}\eta_{\delta}^{\prime}(u-\tilde{u})(\tilde{\nu}-\nu)\phi_{\varsigma} & =\mathbb{E}\int_{t,x,y}\eta_{\delta}^{\prime}(S-\tilde{u})\mathbf{1}_{\{u=S,\tilde{u}\neq S\}}(-\nu)\phi_{\varsigma}\\
 & \quad+\mathbb{E}\int_{t,x,y}\eta_{\delta}^{\prime}(u-S)\mathbf{1}_{\{u\neq S,\tilde{u}=S\}}\tilde{\nu}\phi_{\varsigma}\leq0.
\end{align*}
The estimates for other terms in the right hand side of (\ref{eq:diffL1 u tildeu})
are similar to the proof of \cite[Theorem 4.1]{dareiotis2019entropy}
or Lemma \ref{lem:Lemma for L1}. Since
\[
\left|\mathbb{E}\int_{t,x,y}\eta_{\delta}(u-\tilde{u})\partial_{t}\phi_{\varsigma}-\mathbb{E}\int_{t,x,y}|u-\tilde{u}|\partial_{t}\phi_{\varsigma}\right|\leq C\delta
\]
 $\kappa\in(0,1/2]$, we have
\begin{align}
 & -\mathbb{E}\int_{t,x,y}|u-\tilde{u}|\partial_{t}\phi_{\varsigma}\label{eq:diffu tildeu}\\
 & \leq C\mathbb{E}\int_{t,x,y}|u-\tilde{u}|\phi_{\varsigma}+C\mathfrak{C}(\varsigma,\delta)\mathbb{E}\left(1+\left\Vert u\right\Vert _{L_{m+1}(Q_{T})}^{m+1}+\left\Vert \tilde{u}\right\Vert _{L_{m+1}(Q_{T})}^{m+1}\right).\nonumber 
\end{align}
For $\vartheta>(m\land2)^{-1}$ and $\alpha\in(1/(2\vartheta),1\land(m/2))$,
we have $-1+2\alpha\vartheta>0$. Choose $\delta=\varsigma^{2\vartheta}$
such that $\mathfrak{C}(\varsigma,\delta)\rightarrow0$ as $\varsigma\rightarrow0^{+}$.
With the continuity of translations, we have
\[
-\mathbb{E}\int_{t,x}|u(t,x)-\tilde{u}(t,x)|\partial_{t}\varphi(t)\leq C\mathbb{E}\int_{t,x}|u(t,x)-\tilde{u}(t,x)|\varphi(t)
\]
for a constant $C$ depending only on $K$, $N_{0}$, $d$ and $T$.
Similar to the proof of (\ref{eq:ineq_givenphis}), we have
\begin{align*}
 & \mathbb{E}\int_{\mathbb{T}^{d}}\left|u(\tau,x)-\tilde{u}(\tau,x)\right|dx\\
 & \leq\mathbb{E}\int_{\mathbb{T}^{d}}\big|\xi(x)-\tilde{\xi}(x)\big|dx+C\mathbb{E}\int_{0}^{\tau}\int_{\mathbb{T}^{d}}\left|u(t,x)-\tilde{u}(t,x)\right|dxdt,\quad\mathrm{a.e.}\ \tau\in[0,T].
\end{align*}
Using Gr\"onwall's inequality, we prove the theorem.
\end{proof}
Now we prove our main theorem.
\begin{proof}[Proof of Theorem \ref{thm:maintheorem}]
The existence is referred to Theorem \ref{thm:existfor u_v}. For
the uniqueness, we define $\xi_{n}$ as in \eqref{defn for xi_n}.
Denote $(u_{n},\nu_{n})$ as the entropy solution of the obstacle
problem $\Pi_{S}(\Phi,F,\xi_{n})$ constructed in Theorem \ref{thm:existfor u_v}.
From Remark \ref{rem:starprofor u}, the function $u_{n}$ has $(\star)$-property.
Then, Theorem \ref{thm:uniqueentropy} indicates the uniqueness of
$u_{n}$.

On the other hand, for any entropy solution $(u,\nu)$ to the obstacle
problem $\Pi_{S}(\Phi,F,\xi)$, applying Theorem \ref{thm:uniqueentropy}
again, we have
\[
\underset{t\in[0,T]}{\mathrm{ess\ sup}}\ \mathbb{E}\int_{\mathbb{T}^{d}}|u_{n}(t,x)-u(t,x)|dx\leq C\mathbb{E}\int_{\mathbb{T}^{d}}|\xi_{n}(x)-\xi(x)|dx
\]
for a constant $C$ depending only on $K$, $N_{0}$, $d$ and $T$.
Therefore, $u_{n}$ converges to $u$ in $L_{1}(\Omega_{T}\times\mathbb{T}^{d})$
when $n\rightarrow\infty$. Then, the uniqueness of $u_{n}$ gives
the uniqueness of $u$.

Now, we apply entropy formulation (\ref{eq:entropy formula-1}) with
the functions $\eta(r):=r$
and $\eta(r):=-r$. By taking
expectations and combining these two inequalities, we have
\begin{align*}
-\mathbb{E}\int_{0}^{T}\int_{\mathbb{T}^{d}}u\partial_{t}\phi dxdt & =\mathbb{E}\Bigg[\int_{\mathbb{T}^{d}}\xi\phi(0)dx+\int_{0}^{T}\int_{\mathbb{T}^{d}}\Phi(u)\Delta\phi dxdt\\
 & \quad+\int_{0}^{T}\int_{\mathbb{T}^{d}}\big(F(t,x,u)-\nu\big)\phi dxdt\Bigg].
\end{align*}
With the uniqueness of $u$, if there exists another entropy solution
$(u,\tilde{\nu})$ to the obstacle problem $\Pi_{S}(\Phi,F,\xi)$,
we have
\[
\int_{0}^{T}\int_{\mathbb{T}^{d}}\nu\phi dxdt=\int_{0}^{T}\int_{\mathbb{T}^{d}}\nu\phi dxdt,
\]
for all test function $\phi:=\varphi\varrho\geq0$, where $(\varphi,\varrho)\in C_{c}^{\infty}([0,T))\times C^{\infty}(\mathbb{T}^{d})$.
Therefore, we have $\nu=\tilde{\nu}$ almost everywhere in $Q_{T}$,
almost surely.
\end{proof}
\bibliographystyle{plain}
\bibliography{bi}

\begin{thebibliography}{10}

\bibitem{avelin2017comparison}
B.~Avelin and T.~Lukkari.
\newblock A comparison principle for the porous medium equation and its
  consequences.
\newblock {\em Revista Matem\'{a}tica Iberoamericana}, 33(2):573--594, 2017.

\bibitem{barbu2008existence}
V.~Barbu, G.~Da~Prato, and M.~R{\"o}ckner.
\newblock Existence and uniqueness of nonnegative solutions to the stochastic
  porous media equation.
\newblock {\em Indiana University Mathematics Journal}, pages 187--211, 2008.

\bibitem{barbu2009existence}
V.~Barbu, G.~Da~Prato, and M.~R{\"o}ckner.
\newblock Existence of strong solutions for stochastic porous media equation
  under general monotonicity conditions.
\newblock {\em The Annals of Probability}, 37(2):428--452, 2009.

\bibitem{barbu2016stochastic}
V.~Barbu, G.~Da~Prato, and M.~R{\"o}ckner.
\newblock {\em Stochastic porous media equations}, volume 2163.
\newblock Springer, 2016.

\bibitem{barbu2011random}
V.~Barbu and M.~R{\"o}ckner.
\newblock On a random scaled porous media equation.
\newblock {\em Journal of Differential Equations}, 251(9):2494--2514, 2011.

\bibitem{barbu2015stochastic}
V.~Barbu, M.~R{\"o}ckner, and F.~Russo.
\newblock Stochastic porous media equations in {$\Bbb{R}^d$}.
\newblock {\em Journal de Math\'{e}matiques Pures et Appliqu\'{e}es. Neuvi\`eme
  S\'{e}rie}, 103(4):1024--1052, 2015.

\bibitem{baroni2014lorentz}
P.~Baroni.
\newblock Lorentz estimates for obstacle parabolic problems.
\newblock {\em Nonlinear Analysis. Theory, Methods \& Applications. An
  International Multidisciplinary Journal}, 96:167--188, 2014.

\bibitem{bauzet2015degenerate}
C.~Bauzet, G.~Vallet, and P.~Wittbold.
\newblock A degenerate parabolic--hyperbolic {C}auchy problem with a stochastic
  force.
\newblock {\em Journal of Hyperbolic Differential Equations}, 12(03):501--533,
  2015.

\bibitem{bogelein2015obstacle}
V.~B{\"o}gelein, T.~Lukkari, and C.~Scheven.
\newblock The obstacle problem for the porous medium equation.
\newblock {\em Mathematische Annalen}, 363(1):455--499, 2015.

\bibitem{bogelein2017holder}
V.~B{\"o}gelein, T.~Lukkari, and C.~Scheven.
\newblock H{\"o}lder regularity for degenerate parabolic obstacle problems.
\newblock {\em Arkiv f{\"o}r Matematik}, 55(1):1--39, 2017.

\bibitem{cho2020holder}
Y.~Cho and C.~Scheven.
\newblock H{\"o}lder regularity for singular parabolic obstacle problems of
  porous medium type.
\newblock {\em International Mathematics Research Notices. IMRN},
  (6):1671--1717, 2020.

\bibitem{dareiotis2015boundedness}
K.~Dareiotis and M.~Gerencs\'{e}r.
\newblock On the boundedness of solutions of {SPDE}s.
\newblock {\em Stochastic Partial Differential Equations: Analysis and
  Computations}, 3(1):84--102, 2015.

\bibitem{dareiotis2019entropy}
K.~Dareiotis, M.~Gerencs{\'e}r, and B.~Gess.
\newblock Entropy solutions for stochastic porous media equations.
\newblock {\em Journal of Differential Equations}, 266(6):3732--3763, 2019.

\bibitem{dareiotis2020nonlinear}
K.~Dareiotis and B.~Gess.
\newblock Nonlinear diffusion equations with nonlinear gradient noise.
\newblock {\em Electronic Journal of Probability}, 25, 2020.

\bibitem{dareiotis2020ergodicity}
K.~Dareiotis, B.~Gess, and P.~Tsatsoulis.
\newblock Ergodicity for stochastic porous media equations with multiplicative
  noise.
\newblock {\em SIAM Journal on Mathematical Analysis}, 52(5):4524--4564, 2020.

\bibitem{denis2014obstacle}
L.~Denis, A.~Matoussi, and J.~Zhang.
\newblock The obstacle problem for quasilinear stochastic {PDE}s: analytical
  approach.
\newblock {\em The Annals of Probability}, 42(3):865--905, 2014.

\bibitem{denis2020quasilinear}
L.~Denis, A.~Matoussi, and J.~Zhang.
\newblock Quasilinear stochastic {PDE}s with two obstacles: probabilistic
  approach.
\newblock {\em Stochastic Processes and their Applications}, 133:1--40, 2021.

\bibitem{donati1993white}
C.~Donati-Martin and \'{E}. Pardoux.
\newblock White noise driven spdes with reflection.
\newblock {\em Probability Theory and Related Fields}, 95(1):1--24, 1993.

\bibitem{dong2019obstacle}
Y.~Dong, X.~Yang, and J.~Zhang.
\newblock The obstacle problem for quasilinear stochastic {PDE}s with {N}eumann
  boundary condition.
\newblock {\em Stochastics and Dynamics}, 19(5):1950039, 24, 2019.

\bibitem{dong2020obstacle}
Y.~Dong, X.~Yang, and J.~Zhang.
\newblock The obstacle problem for quasilinear stochastic integral-partial
  differential equations.
\newblock {\em Stochastics}, 92(2):297--333, 2020.

\bibitem{evans1998partial}
L.~C. Evans.
\newblock Partial differential equations.
\newblock {\em Graduate studies in mathematics}, 19(2), 1998.

\bibitem{flandoli1995martingale}
F.~Flandoli and D.~Gatarek.
\newblock Martingale and stationary solutions for stochastic {N}avier-{S}tokes
  equations.
\newblock {\em Probability Theory and Related Fields}, 102(3):367--391, 1995.

\bibitem{gess2018well}
B.~Gess and M.~Hofmanov\'{a}.
\newblock Well-posedness and regularity for quasilinear degenerate
  parabolic-hyperbolic {SPDE}.
\newblock {\em The Annals of Probability}, 46(5):2495--2544, 2018.

\bibitem{gurtin1977diffusion}
M.~E. Gurtin and R.~C. MacCamy.
\newblock On the diffusion of biological populations.
\newblock {\em Mathematical Biosciences}, 33(1-2):35--49, 1977.

\bibitem{gyongy1996existence}
I.~Gy{\"o}ngy and N.~V. Krylov.
\newblock Existence of strong solutions for {I}t{\^o}'s stochastic equations
  via approximations.
\newblock {\em Probability Theory and Related Fields}, 105(2):143--158, 1996.

\bibitem{haussmann1989stochastic}
U.~G. Haussmann and \'{E}. Pardoux.
\newblock Stochastic variational inequalities of parabolic type.
\newblock {\em Applied Mathematics and Optimization}, 20(1):163--192, 1989.

\bibitem{klimsiak2018obstacle}
T.~Klimsiak.
\newblock Obstacle problem for evolution equations involving measure data and
  operator corresponding to semi-{D}irichlet form.
\newblock {\em Journal of Evolution Equations}, 18(2):681--713, 2018.

\bibitem{korte2009obstacle}
R.~Korte, T.~Kuusi, and J.~Siljander.
\newblock Obstacle problem for nonlinear parabolic equations.
\newblock {\em Journal of Differential Equations}, 246(9):3668--3680, 2009.

\bibitem{korte2019lower}
R.~Korte, P.~Lehtel{\"a}, and S.~Sturm.
\newblock Lower semicontinuous obstacles for the porous medium equation.
\newblock {\em Journal of Differential Equations}, 266(4):1851--1864, 2019.

\bibitem{krylov2013relatively}
N.~V. Krylov.
\newblock A relatively short proof of {I}t{\^o}'s formula for {SPDE}s and its
  applications.
\newblock {\em Stochastic Partial Differential Equations. Analysis and
  Computations}, 1(1):152--174, 2013.

\bibitem{krylov1979stochastic}
N.~V. Krylov and B.~L. Rozovskii.
\newblock Stochastic evolution equations.
\newblock {\em Itogi Nauki i Tekhniki. Seriya "Sovremennye Problemy Matematiki.
  Noveishie Dostizheniya"}, 14:71--146, 1979.

\bibitem{lototsky2007random}
S.~V. Lototsky.
\newblock A random change of variables and applications to the stochastic
  porous medium equation with multiplicative time noise.
\newblock {\em Communications on Stochastic Analysis}, 1(3):1, 2007.

\bibitem{matoussi2017backward}
A.~Matoussi, W.~Sabbagh, and T.~Zhang.
\newblock Backward doubly {SDE}s and semilinear stochastic {PDE}s in a convex
  domain.
\newblock {\em Stochastic Processes and their Applications}, 127(9):2781--2815,
  2017.

\bibitem{matoussi2015obstacle}
A.~Matoussi, W.~Sabbagh, and C.~Zhou.
\newblock The obstacle problem for semilinear parabolic partial
  integro-differential equations.
\newblock {\em Stochastics and Dynamics}, 15(1):1550007, 38, 2015.

\bibitem{matoussi2010obstacle}
A.~Matoussi and L.~Stoica.
\newblock The obstacle problem for quasilinear stochastic {PDE}'s.
\newblock {\em The Annals of Probability}, 38(3):1143--1179, 2010.

\bibitem{mignot1977inequations}
F.~Mignot and J.~P. Puel.
\newblock In\'{e}quations d'\'{e}volution paraboliques avec convexes
  d\'{e}pendant du temps. {A}pplications aux in\'{e}quations quasi
  variationnelles d'\'{e}volution.
\newblock {\em Archive for Rational Mechanics and Analysis}, 64(1):59--91,
  1977.

\bibitem{muskat1938flow}
M.~Muskat.
\newblock The flow of homogeneous fluids through porous media.
\newblock {\em Soil Science}, 46(2):169, 1938.

\bibitem{nualart1992white}
D.~Nualart and \'{E}. Pardoux.
\newblock White noise driven quasilinear {SPDE}s with reflection.
\newblock {\em Probability Theory and Related Fields}, 93(1):77--89, 1992.

\bibitem{pardoux1975equation}
{\'E}.~Pardoux.
\newblock Equation aux derivees partielles stochastiques non linearies
  nonotones.
\newblock {\em These, Universit{\'e} Paris XI}, 1975.

\bibitem{pierre1979problems}
M.~Pierre.
\newblock Probl\`emes d'evolution avec contraintes unilat\'{e}rales et
  potentiels paraboliques.
\newblock {\em Communications in Partial Differential Equations},
  4(10):1149--1197, 1979.

\bibitem{pierre1980representant}
M.~Pierre.
\newblock {\em Repr{\'e}sentant pr{\'e}cis d'un potentiel parabolique}, volume
  814 of {\em Lecture Notes in Math.}
\newblock Springer, 1980.

\bibitem{prevot2007concise}
C.~Pr\'{e}v\^{o}t and M.~R\"{o}ckner.
\newblock {\em A concise course on stochastic partial differential equations},
  volume 1905.
\newblock Springer, Berlin, 2007.

\bibitem{qiu2014quasi}
J.~Qiu and W.~Wei.
\newblock On the quasi-linear reflected backward stochastic partial
  differential equations.
\newblock {\em Journal of Functional Analysis}, 267(10):3598--3656, 2014.

\bibitem{ren2007stochastic}
J.~Ren, M.~R{\"o}ckner, and F.~Wang.
\newblock Stochastic generalized porous media and fast diffusion equations.
\newblock {\em Journal of Differential Equations}, 238(1):118--152, 2007.

\bibitem{schatzler2020obstacle}
L.~Sch{\"a}tzler.
\newblock The obstacle problem for degenerate doubly nonlinear equations of
  porous medium type.
\newblock {\em Annali di Matematica Pura ed Applicata. Series IV},
  200(2):641--683, 2020.

\bibitem{vazquez2007porous}
J.~L. V{\'a}zquez.
\newblock {\em The porous medium equation: mathematical theory}.
\newblock Oxford University Press on Demand, 2007.

\bibitem{XU2009White}
T.~Xu and T.~Zhang.
\newblock White noise driven {SPDE}s with reflection: existence, uniqueness and
  large deviation principles.
\newblock {\em Stochastic Processes and their Applications},
  119(10):3453--3470, 2009.

\bibitem{yang2019obstacle}
X.~Yang and J.~Zhang.
\newblock The obstacle problem for quasilinear stochastic {PDE}s with
  degenerate operator.
\newblock {\em Stochastic Processes and their Applications}, 129(9):3055--3079,
  2019.

\bibitem{yang2013dynkin}
Z.~Yang and S.~Tang.
\newblock Dynkin game of stochastic differential equations with random
  coefficients and associated backward stochastic partial differential
  variational inequality.
\newblock {\em SIAM Journal on Control and Optimization}, 51(1):64--95, 2013.

\bibitem{zel2002physics}
Y.~B. Zel'Dovich and Y.~P. Raizer.
\newblock {\em Physics of shock waves and high-temperature hydrodynamic
  phenomena}.
\newblock Courier Corporation, 2002.

\end{thebibliography}

\end{document}